\newcommand\abs[1]{\left|#1\right|}
\newtheorem{observation}{Remark}[section]
\newtheorem{lemma}[observation]{Lemma}  
\newtheorem{theorem}[observation]{Theorem}
\newtheorem{definition}[observation]{Definition}
\newtheorem{example}[observation]{Example}
\newtheorem{proposition}[observation]{Proposition} 
\newtheorem{corollary}[observation]{Corollary}
\newdimen\w@dth
\def\setw@dth#1#2{\setbox\z@\hbox{\scriptsize $#1$}\w@dth=\wd\z@
\setbox\@ne\hbox{\scriptsize $#2$}\ifnum\w@dth<\wd\@ne \w@dth=\wd\@ne \fi
\advance\w@dth by 1.2em}
\def\t@^#1_#2{\allowbreak\def\n@one{#1}\def\n@two{#2}\mathrel
{\setw@dth{#1}{#2}
\mathop{\hbox to \w@dth{\rightarrowfill}}\limits
\ifx\n@one\empty\else ^{\box\z@}\fi
\ifx\n@two\empty\else _{\box\@ne}\fi}}
\def\t@@^#1{\@ifnextchar_ {\t@^{#1}}{\t@^{#1}_{}}}
\def\t@left^#1_#2{\def\n@one{#1}\def\n@two{#2}\mathrel{\setw@dth{#1}{#2}
\mathop{\hbox to \w@dth{\leftarrowfill}}\limits
\ifx\n@one\empty\else ^{\box\z@}\fi
\ifx\n@two\empty\else _{\box\@ne}\fi}}
\def\t@@left^#1{\@ifnextchar_ {\t@left^{#1}}{\t@left^{#1}_{}}}
\def\two@^#1_#2{\def\n@one{#1}\def\n@two{#2}\mathrel{\setw@dth{#1}{#2}
\mathop{\vcenter{\hbox to \w@dth{\rightarrowfill}\kern-1.7ex
                 \hbox to \w@dth{\rightarrowfill}}%
       }\limits
\ifx\n@one\empty\else ^{\box\z@}\fi
\ifx\n@two\empty\else _{\box\@ne}\fi}}
\def\tw@@^#1{\@ifnextchar_ {\two@^{#1}}{\two@^{#1}_{}}}
\def\tofr@^#1_#2{\def\n@one{#1}\def\n@two{#2}\mathrel{\setw@dth{#1}{#2}
\mathop{\vcenter{\hbox to \w@dth{\rightarrowfill}\kern-1.7ex
                 \hbox to \w@dth{\leftarrowfill}}%
       }\limits
\ifx\n@one\empty\else ^{\box\z@}\fi
\ifx\n@two\empty\else _{\box\@ne}\fi}}
\def\t@fr@^#1{\@ifnextchar_ {\tofr@^{#1}}{\tofr@^{#1}_{}}}
\newdimen\W@dth
\def\setW@dth#1#2{\setbox\z@\hbox{$#1$}\W@dth=\wd\z@
\setbox\@ne\hbox{$#2$}\ifnum\W@dth<\wd\@ne \W@dth=\wd\@ne \fi
\advance\W@dth by 1.2em}
\def\T@^#1_#2{\allowbreak\def\N@one{#1}\def\N@two{#2}\mathrel
{\setW@dth{#1}{#2}
\mathop{\hbox to \W@dth{\rightarrowfill}}\limits
\ifx\N@one\empty\else ^{\box\z@}\fi
\ifx\N@two\empty\else _{\box\@ne}\fi}}
\def\T@@^#1{\@ifnextchar_ {\T@^{#1}}{\T@^{#1}_{}}}
\def\T@left^#1_#2{\def\N@one{#1}\def\N@two{#2}\mathrel{\setW@dth{#1}{#2}
\mathop{\hbox to \W@dth{\leftarrowfill}}\limits
\ifx\N@one\empty\else ^{\box\z@}\fi
\ifx\N@two\empty\else _{\box\@ne}\fi}}
\def\T@@left^#1{\@ifnextchar_ {\T@left^{#1}}{\T@left^{#1}_{}}}
\def\Tofr@^#1_#2{\def\N@one{#1}\def\N@two{#2}\mathrel{\setW@dth{#1}{#2}
\mathop{\vcenter{\hbox to \W@dth{\rightarrowfill}\kern-1.7ex
                 \hbox to \W@dth{\leftarrowfill}}%
       }\limits
\ifx\N@one\empty\else ^{\box\z@}\fi
\ifx\N@two\empty\else _{\box\@ne}\fi}}
\def\T@fr@^#1{\@ifnextchar_ {\Tofr@^{#1}}{\Tofr@^{#1}_{}}}
\def\Two@^#1_#2{\def\N@one{#1}\def\N@two{#2}\mathrel{\setW@dth{#1}{#2}
\mathop{\vcenter{\hbox to \W@dth{\rightarrowfill}\kern-1.7ex
                 \hbox to \W@dth{\rightarrowfill}}%
       }\limits
\ifx\N@one\empty\else ^{\box\z@}\fi
\ifx\N@two\empty\else _{\box\@ne}\fi}}
\def\Tw@@^#1{\@ifnextchar_ {\Two@^{#1}}{\Two@^{#1}_{}}}
\def\to{\@ifnextchar^ {\t@@}{\t@@^{}}}
\def\from{\@ifnextchar^ {\t@@left}{\t@@left^{}}}
\def\tofro{\@ifnextchar^ {\t@fr@}{\t@fr@^{}}}
\def\To{\@ifnextchar^ {\T@@}{\T@@^{}}}
\def\From{\@ifnextchar^ {\T@@left}{\T@@left^{}}}
\def\Two{\@ifnextchar^ {\Tw@@}{\Tw@@^{}}}
\def\Tofro{\@ifnextchar^ {\T@fr@}{\T@fr@^{}}}
\title{Linearizing Combinators}
\author{Robin Cockett and Jean-Simon Pacaud Lemay}
\begin{document}
\allowdisplaybreaks

\maketitle

\begin{abstract} In 2017, Bauer, Johnson, Osborne, Riehl, and Tebbe (BJORT) showed that the abelian functor calculus provides an example of a Cartesian differential category.  The definition of a Cartesian differential category is based on a differential combinator which directly formalizes the total derivative from multivariable calculus.  However, in the aforementioned  work the authors used techniques from Goodwillie's functor calculus to establish a linearization process from which they then derived a differential combinator.  This raised the question of what the precise relationship between linearization and having a differential combinator might be.  

In this paper, we introduce the notion of a {\em linearizing combinator\/} which abstracts linearization in the abelian functor calculus.  We then use it to provide an alternative axiomatization of a Cartesian differential category.  Every Cartesian differential category comes equipped with a canonical linearizing combinator obtained by differentiation at zero.  Conversely, a differential combinator can be constructed {\`a} la BJORT when one has a system of {\em partial\/} linearizing combinators in each context.  Thus, while linearizing combinators do provide an alternative axiomatization of Cartesian differential categories, an explicit notion of partial linearization is required.  This is in contrast to the situation for differential combinators where partial differentiation is automatic in the presence of total differentiation.  The ability to form a system of partial linearizing combinators from a total linearizing combinator, while not being possible in general, is possible when the setting is Cartesian closed.
\end{abstract}

\noindent {\small \textbf{Acknowledgements:} The authors would like to thank Kristine Bauer for her help on this project, as well as Brenda Johnson and Sarah Yeakel for useful discussions at the 2018 Canadian Mathematical Society Summer Meeting which initiated this research project. The first author is partially supported by NSERC (Canada). The second author would like to thank Kellogg College, the Clarendon Fund, and the Oxford Google-DeepMind Graduate Scholarship for financial support for this project. }

\newpage
\tableofcontents


\section{Introduction}

Cartesian differential categories, introduced by Blute, Cockett, and Seely in \cite{blute2009cartesian}, are left additive categories which are equipped with a differential combinator $\mathsf{D}$ which formalizes the derivative from multivariable calculus over Euclidean spaces.  For every map $f: A \to B$, the differential combinator produces its derivatives $\mathsf{D}[f]: A \times A \to B$, which is \emph{linear} in its second argument. The notion of linearity in a Cartesian differential category is defined with respect to the differential combinator and often coincides with the classical notion from linear algebra. In particular, linearity in a Cartesian differential category always implies additivity. That said, there are examples of Cartesian differential categories where a map may be additive yet not linear. There is no shortage of examples of Cartesian differential categories: the category of Euclidean spaces and real smooth functions between them, the Lawvere Theory of polynomials over a commutative rig, any category with finite biproducts, cofree Cartesian differential categories \cite{cockett2011faa,lemay2018tangent}, the coKleisli category of a differential category \cite{Blute2019,blute2006differential} which include models such as convenient vector spaces \cite{blute2010convenient,kriegl1997convenient,manzyuk2012tangent}, and categorical models of the differential $\lambda$-calculus \cite{bucciarelli2010categorical,Cockett-2019,EHRHARD20031,manzonetto_2012}. 

abelian functor calculus was developed by Johnson and McCarthy in \cite{johnson2004deriving}, based on Goodwillie's functor calculus \cite{goodwillie1990calculus,goodwillie1991calculus,goodwillie2003calculus}. In \cite{bauer2018directional}, Bauer, Johnson, Osborne, Riehl, and Tebbe (BJORT) showed that, using the abelian functor calculus, the homotopy category of the category of abelian categories is a Cartesian differential category. The differential combinator $\nabla(-)$ (referred to as the \emph{directional derivative} in \cite[Section 6]{bauer2018directional}) is defined as \cite[Definition 6.1]{bauer2018directional} $\nabla F (X,V) := D_1(F(X \oplus -))(V)$\footnote{Here the second argument is the linear argument.}, where $D_1(G)$ is the \emph{linearization} (or \emph{linear approximation}) of a functor $G$ \cite[Section 5]{bauer2018directional}. 

From the Cartesian differential category perspective, the BJORT construction is backwards.  In any Cartesian differential category it is always possible to define the notion of a linear map and, indeed, to linearize a map using the differential combinator.   However, BJORT constructed their differential combinator using an already established notion of linear map and linearization. The goal of this paper is to reverse engineer BJORT's construction by abstracting the notion of linear approximation $\mathsf{D}_1$ from the (abelian) functor calculus. To this end, we introduce the notion of a \textbf{linearizing combinator} and show that every Cartesian differential category comes equipped with a canonical system of linearizing combinators built from the differential combinator. Furthermore, we show that the differential combinator can be reconstructed {\`a} la BJORT using such a system of linearizing combinators.  In this manner, we show that linearizing combinators do, in fact, provide an alternative axiomatization of Cartesian differential categories. 

To better understand the BJORT construction, let us consider classical multivariable calculus. Given a smooth function $f: \mathbb{R} \to \mathbb{R}$, linearization $\mathsf{L}[f]: \mathbb{R} \to \mathbb{R}$ is the best $\mathbb{R}$-linear function which is closest to $f$. This is given by the first degree term in its Maclaurin series expansion (i.e its Taylor series expansion at $0$), that is, $\mathsf{L}[f](x) = f^\prime(0) x$, which is indeed an $\mathbb{R}$-linear function. In terms of the differential combinator, its differential $\mathsf{D}[f]: \mathbb{R} \times \mathbb{R} \to \mathbb{R}$ is defined as $\mathsf{D}[f](x,y) = f^\prime(x) y$, and so $\mathsf{L}[f](x) = \mathsf{D}[f](0,x)$. Therefore, in an arbitrary Cartesian differential category, the linearizing combinator $\mathsf{L}$ is defined by first applying the differential combinator and then evaluating the derivative at zero in its first argument: 
\[ \infer{\text{Evaluate at zero in the first argument} \qquad \mathsf{L}[f]:= \xymatrixcolsep{3pc}\xymatrix{A \ar[r]^-{\langle 0, 1 \rangle} & A \times A \ar[r]^-{\mathsf{D}[f]} & B}}{\infer{ \text{Apply the differential combinator} \qquad \xymatrixcolsep{5pc}\xymatrix{A \times A \ar[r]^-{\mathsf{D}[f]} & B}}{ \xymatrixcolsep{5pc}\xymatrix{A \ar[r]^-{f} & B} }} \]
We can use this to derive an abstract notion of a linearizing combinator, $\mathsf{L}$, for arbitrary Cartesian left additive categories, which satisfies axioms which parallel those of the differential combinator. These include a sort of chain rule for linearizing a composite and the fact that the linearization of a map is always additive. In particular, one can then show that $D_1$, from abelian functor calculus, is an example of such an abstract linearizing combinator. 

To define a differential combinator from linearization, the ability to perform linearization in context is required.  We refer to linearization in context as {\em partial\/} linearization because differentiation in context is usually called partial differentiation.  Consider the classical limit definition of the derivative of a smooth function ${f: \mathbb{R} \to \mathbb{R}}$:
\[ \mathsf{D}[f](x, y) = \lim \limits_{t \to 0} \frac{f(x + ty) - f(x)}{t} \]
Note that if we evaluate at $x=0$, then we obtain an expression of $\mathsf{L}[f]$ in terms of a limit: 
\[ \mathsf{L}[f](y) =  \mathsf{D}[f](0, y) = \lim \limits_{t \to 0} \frac{f(ty) - f(0)}{t} \]
For a fixed $x$, define $g_x: \mathbb{R} \to \mathbb{R}$ to be the smooth function defined as $g_x(y) = f(x + y)$. Then: 
\[ \mathsf{D}[f](x, y) =  \lim \limits_{t \to 0} \frac{f(x + t \cdot y) - f(x)}{t} =  \lim \limits_{t \to 0} \frac{g_x(ty) - g_x(0)}{t} = \mathsf{L}[g_x](y)  \]
Therefore, the derivative of $f$ is the linearization of the function $g_x(y) = f(x + y)$ in the variable $y$. In other words, if we let $g(x,y) = f(x + y)$, then $\mathsf{D}[f]$ is the partial linearization of $g(x,y)$ in its second argument while keep the first argument constant.  We may write this directly as $\mathsf{D}[f](x, y) = \mathsf{L}[z \mapsto f(x+z)](y)$ where we are viewing $z \mapsto f(x+z)$ as a function in the variable context $x$. This is precisely how BJORT define their differential combinator. In fact, every differential combinator in a Cartesian differential category can be defined in this fashion. However, there is a caveat: in an arbitrary Cartesian left additive category, it is not always possible to define partial linearization from total linearization. Indeed, for example, $\mathcal{C}^1$ functions have a total linearization combinator but do not have partial linearization since this would induce a differential combinator, which cannot be the case since the derivative of a $\mathcal{C}^1$ function is not necessarily a  $\mathcal{C}^1$ function (see Example \ref{counter-example} below for more details). Thus, partial linearization, that is linearization in \emph{context}, must be assumed. 

From a categorical perspective, the notion of context is captured by simple slice categories \cite{jacobs1999categorical}, where a map $A \to B$ in the simple slice is a map of type $C \times A \to B$ in the base category. Maps in the simple slice category over an object $C$ are said to be in ``context $C$''. Asking that a Cartesian left additive category has partial linearization is requiring that it comes equipped with a \textbf{system of linearizing combinators} and is the requirement that every simple slice category come equipped with a linearizing combinator $\mathsf{L}^C$.  Thus, for a map $f: C \times A \to B$, ${\mathsf{L}^C[f]: C \times A \to B}$ is its linearization in context $C$, and these linearizing combinators are compatible with one another. For example, given a map of type $C \times A \to B$, we require that partially linearizing $A$ then $C$ is the same as partially linearizing $C$ then $A$. For the abelian functor calculus, BJORT's linearization of a multivariable functor at a single variable by holding all other inputs constant, $D^1_1$, is precisely a linearizing combinator in context.  For a Cartesian differential category, every simple slice category is again a Cartesian differential category where the differential combinator in context is given by partial differentiation. As such, every Cartesian differential category comes equipped with a canonical system of linearizing combinators. Conversely, to define a differential combinator from partial linearization, one must first be able to precompose by a map which captures addition. In a Cartesian left additive category, for every object $A$, there is a map $\oplus_A := \pi_0 + \pi_1: A \times A \to A$ which makes $A$ a commutative monoid. This allows the differential combinator $\mathsf{D}$ to be 
defined on a map by linearizing in context that map precomposed by $\oplus_A$, thus, generalizing the construction above. 
\[ \infer{\text{Linearize in the second argument} \qquad \mathsf{D}[f]:= \xymatrixcolsep{5pc}\xymatrix{A \times A \ar[r]^-{\mathsf{L}^A[\oplus_A f]} & B}}{\infer{ \text{Precompose by addition} \qquad \xymatrixcolsep{5pc}\xymatrix{A \times A \ar[r]^-{\oplus_A} & A \ar[r]^-{f} & B}}{ \xymatrixcolsep{5pc}\xymatrix{A \ar[r]^-{f} & B} }} \]
Furthermore, these constructions are inverses of each other, and so there is a bijective correspondence between differential combinators and systems of linearizing combinators. This shows that a Cartesian differential category is precisely a Cartesian left additive category with a system of linearizing combinators. 

To show how partial linearization arises from total linearization, we investigate linearization in Cartesian closed settings.  For Cartesian \emph{closed} left additive categories, we introduce the notion of an \textbf{exponentiable} linearizing combinator.   We then show how such a total linearizing combinator gives rise to a \textbf{closed} systems of linearizing combinators: that is a system of linearizing combinators, which are compatible with the closed structure.  To obtain a linearizing combinator in context, given a total exponentiable linearizing combinator, one employs the total linearization on the curry of the map and then one uncurries the result:

\[ \infer{\text{Uncurry} \qquad \mathsf{L}^C[f]:= \xymatrixcolsep{5pc}\xymatrix{C \times A \ar[r]^-{\lambda^{-1}\left( \mathsf{L}[\lambda(f)] \right)} & [C,A]} }{\infer{\text{Linearize} \qquad \xymatrixcolsep{5pc}\xymatrix{A \ar[r]^-{\mathsf{L}[\lambda(f)]} & [C,A]}}{\infer{ \text{Curry} \qquad \xymatrixcolsep{5pc}\xymatrix{A \ar[r]^-{\lambda(f)} & [C,A]}}{ \xymatrixcolsep{5pc}\xymatrix{C \times A \ar[r]^-{f} & B} }}} \]

\noindent \textbf{Outline:} Section \ref{CDCsec} is a background section which reviews the basic theory of Cartesian differential categories (Definition \ref{cartdiffdef}) and Cartesian left additive categories (Definition \ref{CLACdef}).  It also provides a list of the main examples of Cartesian differential categories used in this paper. The notion of linear maps (Definition \ref{linmapdef}) and their basic properties (Lemma \ref{linlem} and Lemma \ref{linlemimportant}) are reviewed. Section \ref{LINsec} introduces linearizing combinators (Definition \ref{lindef}) the main concept of study in this paper. In Proposition \ref{DLprop}, we show that every differential combinator induces a linearizing combinator, and afterwards we provide examples of these induced linearizing combinators in our main examples. Section \ref{ContextSec} reviews the notion of partial differentiation (Proposition \ref{Dcontext}) and being linear in context (Definition \ref{lin2def}). Section \ref{system-sec} discusses partial linearization by introducing systems of linearizing combinators (Definition \ref{syslindef}). In Proposition \ref{LDprop}, we show how every system of linearizing combinators induces a differential combinator -- following the BJORT construction. The first main result of this paper is Theorem \ref{DLthm} which says that there is a bijective correspondence between differential combinators and systems of linearizing combinators: thus, a Cartesian differential category is precisely a Cartesian left additive category with a system of linearizing combinators. We also provide an example of a linearizing combinator on a Cartesian left additive category which is not induced from a differential combinator or a system of linearizing combinators (Example \ref{counter-example}). Section \ref{closed-sec} studies how to define partial linearization from total linearization in the closed setting by introducing exponentiable linearizing combinators (Definition \ref{Lexpdef}) and closed systems of linearizing combinators (Definition \ref{closeddef}). In Proposition \ref{LDclosedprop}, we show that every closed system of linearizing combinators induces an exponentiable linearizing combinator, and conversely in Proposition \ref{LDexpprop}, we also show how every exponentiable linearizing combinator induces a closed system of linearizing combinators. Theorem \ref{finalthm} is the second main result of this paper, which states that a Cartesian closed differential category (Definition \ref{CDCcloseddef}) is precisely a Cartesian closed left additive category with a closed system of linearizing combinators, or equivalently an exponentiable linearizing combinator. We conclude with some final remarks in Section \ref{sec:conclusion}. \\

\noindent \textbf{Conventions:} We use diagrammatic order for composition: this means that the composite map $fg: A \to C$ is the map which first does $f: A\to B$ then $g: B \to C$. We denote identity maps simply as $1: A \to A$, thus, to simplify notation, we omit the subscript $\__A$.

\section{Cartesian Differential Categories}\label{CDCsec}

In this section, in order to fix notation, we briefly review Cartesian left additive categories, Cartesian differential categories, and linear maps. We also provide the examples of Cartesian differential categories which we will use throughout this paper. We assume that the reader is familiar with the basic theory of Cartesian differential categories: for a more in-depth introduction to Cartesian differential categories, we refer the reader to the original paper \cite{blute2009cartesian}. 

The underlying structure of a Cartesian differential category is that of a Cartesian left additive category. A category is said to be \emph{left} additive if it is \emph{skew}-enriched \cite{Campbell2018} over the category of commutative monoids. This allows one to have zero maps and sums of maps while allowing for maps which do not preserve the additive structure. Maps which do preserve the additive structure are called \emph{additive} maps. 

\begin{definition}\label{LACdef} A \textbf{left additive category} \cite[Definition 1.1.1]{blute2009cartesian} is a category $\mathbb{X}$ such that each hom-set $\mathbb{X}(A,B)$ is a commutative monoid with addition $+: \mathbb{X}(A,B) \times \mathbb{X}(A,B) \to \mathbb{X}(A,B)$, $(f,g) \mapsto f +g$, and zero $0 \in \mathbb{X}(A,B)$, such that pre-composition preserves the additive structure, that is, $f(g+h)=fg+fh$ and $f0=0$. Furthermore, we say that: 
\begin{enumerate}[{\em (i)}]
\item A map $f: A \to B$ is \textbf{constant} if $0f = f$;
\item A map $f: A \to B$ is \textbf{reduced} if $0f=0$;
\item A map $f: A \to B$ is \textbf{semi-additive} if $(g+h)f= gf + hf$;
\item A map $f: A \to B$ is \textbf{additive} if it is both reduced and semi-additive.
\end{enumerate}
\end{definition}

Next, we turn our attention to left additive categories with finite products. For a category with finite products we use $\times$ for the binary product, $\pi_0: A \times B \to A$ and $\pi_1: A \times B \to B$ for the projection maps, $\langle -, - \rangle$ for the pairing operation, so that $f \times g = \langle \pi_0 f, \pi_1 g \rangle$, and $\top$ for the chosen terminal object. Let $\tau_{A,B}: A \times B \to B \times A$ denote the canonical natural \emph{symmetry} isomorphism which is defined as follows: 
\begin{equation}\label{taudef}\begin{gathered} \tau_{A,B} = \langle \pi_1, \pi_0 \rangle
 \end{gathered}\end{equation}
We also denote the canonical natural \emph{interchange} isomorphism by
\[c_{A,B,C,D}: (A \times B) \times (C \times D) \to (A \times C) \times (B \times D)\]
which is defined as: 
  \begin{equation}\label{cdef}\begin{gathered} c_{A,B,C,D} := \langle \pi_0 \times \pi_0, \pi_1 \times \pi_1 \rangle
  \end{gathered}\end{equation}
To simplify notation, we will often omit the subscripts of $\tau$ and $c$. Note that both $\tau$ and $c$ are self-inverse, that is, $\tau \tau = 1$ and $cc=1$.  

\begin{definition}\label{CLACdef} A \textbf{Cartesian left additive category} \cite[Definition 2.3]{lemay2018tangent} is a left additive category $\mathbb{X}$ which has products for which all the projection maps $\pi_0: A \times B \to A$ and $\pi_1: A \times B \to B$ are additive. 
\end{definition}

The definition of a Cartesian left additive category presented here is not precisely that given in \cite[Definition 1.2.1]{blute2009cartesian}, but was shown to be equivalent in \cite[Lemma 2.4]{lemay2018tangent}. Also note that in a Cartesian left additive category, the unique map to the terminal object $\top$ is the zero map ${0: A \to \top}$.

In a Cartesian left additive category, define the \emph{lifting} map $\ell_{A,B,C,D}: A \times D \to (A \times B) \times (C \times D)$ as the map which inserts zeros in the middle two arguments, that is, define $\ell_{A,B,C,D}$ as follows: 
  \begin{equation}\label{ldef}\begin{gathered} \ell_{A,B,C,D} := \langle 1, 0 \rangle \times \langle 0,1 \rangle
  \end{gathered}\end{equation}
As before, to simplify notation, we will often omit the subscripts of $\ell$ when there is no confusion. It is important to note that in an arbitrary Cartesian left additive category, $\ell$ is \emph{not} a natural transformation. However, $\ell$ is natural whenever  $g$ and $h$ are reduced maps making $(f \times k) \ell = \ell \left( (f \times g) \times (h \times k) \right)$. The lifting map $\ell$ is a crucial ingredient in constructing differential combinators and linearizing combinators in \emph{context}, as will see in later sections. 

Cartesian left additive categories can be equivalently axiomatized by equipping each object with a commutative monoid structure so all the projection maps, $\pi_0$ and  $\pi_1$, are monoid morphisms. In this axiomatization of a Cartesian left additive category, the additive maps are precisely the monoid morphisms with respect to the canonical monoid structure. 
Here is how that monoid structure arises:

\begin{lemma}\label{opluslem0} \cite[Proposition 1.2.2, Lemma 1.2.3]{blute2009cartesian} In a Cartesian left additive category, for every object $A$ define the map $\oplus_A: A \times A \to A$ as $\oplus_A := \pi_0 + \pi_1$. Then: 
\begin{enumerate}[{\em (i)}]
\item \label{opluslem1} For every object $A$, $(A, \oplus_A, 0)$ is a commutative monoid, that is, the following equalities hold:
\begin{align*}
\langle 0,1 \rangle \oplus_A = 1 && \langle 1,0 \rangle \oplus_A = 1 && \tau \oplus_A = \oplus_A && c (\oplus_A \times \oplus_A) \oplus_A = (\oplus_A \times \oplus_A) \oplus_A\end{align*}
\item \label{opluslem2} For every pair of objects $A$ and $B$, the following equalities hold:
\begin{align*}
\oplus_{A \times B} = c (\oplus_A \times \oplus_B) && \ell \oplus_{A \times B} = 1&& \ell(\oplus_A \times \oplus_A) = 1\end{align*}
\item \label{opluslem3} A map $f: A \to B$ is additive if and only if $\oplus_A f = \pi_0 f + \pi_1 f$ and $0f = 0$ (or equivalently if $\oplus_A f = (f \times f) \oplus_B$ and $0f =0$).
\end{enumerate}
\end{lemma}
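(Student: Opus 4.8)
The plan is to verify each of the three parts of Lemma~\ref{opluslem0} by direct calculation, working entirely from the definition $\oplus_A = \pi_0 + \pi_1$ together with the left additive axioms (pre-composition distributes over $+$ and kills $0$) and the universal property of products. The only genuinely non-formal inputs are the additivity of the projections (part of the definition of a Cartesian left additive category) and the fact that pairing $\langle-,-\rangle$ interacts with $+$ componentwise, i.e. $\langle f,g\rangle + \langle f',g'\rangle = \langle f+f', g+g'\rangle$ and $\langle 0,0\rangle = 0$; these follow because $\pi_0,\pi_1$ are additive and jointly monic. I would record these two facts first as a preliminary observation, since every subsequent computation leans on them.

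For part~\eqref{opluslem1}, the unit laws are immediate: $\langle 0,1\rangle\oplus_A = \langle 0,1\rangle\pi_0 + \langle 0,1\rangle\pi_1 = 0 + 1 = 1$, and symmetrically for $\langle 1,0\rangle$. Commutativity is $\tau\oplus_A = \langle\pi_1,\pi_0\rangle(\pi_0+\pi_1) = \pi_1 + \pi_0 = \oplus_A$. For associativity one expands both $c(\oplus_A\times\oplus_A)\oplus_A$ and $(\oplus_A\times\oplus_A)\oplus_A$ by pushing the outer $\oplus_A = \pi_0+\pi_1$ through, using additivity of $\pi_0,\pi_1$ to distribute over the sums hidden inside $\oplus_A\times\oplus_A$, and checking that both sides reduce to the fourfold sum $\pi_{00}+\pi_{01}+\pi_{10}+\pi_{11}$ of the four evident projections from $(A\times A)\times(A\times A)$; the interchange map $c$ is exactly what permutes the middle two summands so that the two bracketings agree. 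For part~\eqref{opluslem2}, $\oplus_{A\times B} = c(\oplus_A\times\oplus_B)$ is again checked by post-composing with $\pi_0$ and $\pi_1$ into $A$ and $B$ and expanding; $\ell\oplus_{A\times B} = 1$ and $\ell(\oplus_A\times\oplus_A) = 1$ follow by writing $\ell = \langle 1,0\rangle\times\langle 0,1\rangle$ and using the unit laws from~\eqref{opluslem1} on each factor.

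For part~\eqref{opluslem3}, I would show the three conditions are equivalent. If $f$ is additive, then semi-additivity gives $(g+h)f = gf+hf$ for all $g,h$; specializing to $g=\pi_0$, $h=\pi_1$ yields $\oplus_A f = \pi_0 f + \pi_1 f$, and reducedness gives $0f=0$. Conversely, given $\oplus_A f = \pi_0 f + \pi_1 f$ and $0f=0$, for arbitrary $g,h: C\to A$ one computes $(g+h)f = \langle g,h\rangle\oplus_A f = \langle g,h\rangle(\pi_0 f + \pi_1 f) = gf + hf$, so $f$ is semi-additive, and together with $0f=0$ this is additivity. The parenthetical reformulation $\oplus_A f = (f\times f)\oplus_B$ is obtained from $\pi_0 f + \pi_1 f = (f\times f)(\pi_0 + \pi_1)$, which holds because $(f\times f)\pi_i = \pi_i f$. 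I do not anticipate a real obstacle here; the only thing to be careful about is the order of the preliminary observation about $\langle-,-\rangle$ and $+$, since without it the associativity computation in~\eqref{opluslem1} and the factored form in~\eqref{opluslem3} cannot be pushed through cleanly. Since the statement is attributed to \cite[Proposition 1.2.2, Lemma 1.2.3]{blute2009cartesian}, it would also be legitimate simply to cite it, but carrying out the short verification makes the paper self-contained.
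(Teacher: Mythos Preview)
Your proposal is correct and complete. The paper itself does not prove this lemma at all: it simply states it with a citation to \cite[Proposition 1.2.2, Lemma 1.2.3]{blute2009cartesian} and moves on. Your direct verification via the left-additive axioms, additivity of projections, and the componentwise behaviour of pairing is exactly the standard argument from that reference, so there is nothing to compare beyond noting that you have made the paper self-contained where the authors chose to cite.
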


Cartesian differential categories are Cartesian left additive categories which come equipped with a differential combinator, which in turn is axiomatized by the basic properties of the directional derivative from multivariable differential calculus. In the following definition, note that unlike in the original paper \cite{blute2009cartesian} and other early works on Cartesian differential categories, we use the convention used in the more recent works where the vector argument of $\mathsf{D}[f]$ is its second argument rather than its first argument. There are various equivalent ways of expressing the axioms of a Cartesian differential category. For this paper, we've chosen the one found in \cite[Definition 2.6]{lemay2018tangent} (using the notation for Cartesian left additive categories introduced above).  

\begin{definition}\label{cartdiffdef} A \textbf{Cartesian differential category} \cite[Definition 2.1.1]{blute2009cartesian} is a Cartesian left additive category $\mathbb{X}$ equipped with a \textbf{differential combinator} $\mathsf{D}$, which is a family of operators ${\mathsf{D}: \mathbb{X}(A,B) \to \mathbb{X}(A \times A,B)}$, $f \mapsto \mathsf{D}[f]$, where $\mathsf{D}[f]$ is called the derivative of $f$, such that the following seven axioms hold:  
\begin{enumerate}[{\bf [CD.1]}]
\item $\mathsf{D}[f+g] = \mathsf{D}[f] + \mathsf{D}[g]$ and $\mathsf{D}[0]=0$;
\item $(1 \times \oplus_A) \mathsf{D}[f] = (1 \times \pi_0) \mathsf{D}[f] + (1 \times \pi_1)\mathsf{D}[f]$ and $\langle 1, 0 \rangle \mathsf{D}[f]=0$;
\item $\mathsf{D}[1]=\pi_1$, $\mathsf{D}[\pi_0] = \pi_1\pi_0$ and $\mathsf{D}[\pi_1] = \pi_1\pi_1$;
\item $\mathsf{D}[\langle f, g \rangle] = \langle \mathsf{D}[f] , \mathsf{D}[g] \rangle$; 
\item $\mathsf{D}[fg] = \langle \pi_0 f, \mathsf{D}[f] \rangle \mathsf{D}[g]$ (the chain rule); 
\item $\ell ~\mathsf{D}\!\left[\mathsf{D}[f] \right] = \mathsf{D}[f]$ where $\ell$ is defined as in (\ref{ldef}); 
\item $c~ \mathsf{D}\!\left[\mathsf{D}[f] \right]= \mathsf{D}\left[\mathsf{D}[f] \right]$ where $c$ is defined as in (\ref{cdef}). 
\end{enumerate}
\end{definition}

A discussion on the intuition for the differential combinator axioms can be found in \cite[Remark 2.1.3]{blute2009cartesian}.  Notice, in particular, that {\bf [CD.5]} is the chain rule for the directional derivative.

An important class of maps in a Cartesian differential category is the class of linear maps.

\begin{definition}\label{linmapdef} In a Cartesian differential category with differential combinator $\mathsf{D}$, a map $f$ is said to be \textbf{linear} \cite[Definition 2.2.1]{blute2009cartesian} if $\mathsf{D}[f]= \pi_1 f$. 
\end{definition}

When we need to emphasize the differential sense in which a map is linear we shall say that the map is $\mathsf{D}$-linear.

\begin{lemma}\label{linlem} \cite[Lemma 2.2.2]{blute2009cartesian} In a Cartesian differential category with differential combinator $\mathsf{D}$,
\begin{enumerate}[{\em (i)}]
\item \label{linlem.add} If $f$ is linear then $f$ is additive;
\item  \label{linlem.pre} If $f$ is linear then for every map $g$ which is post-composable with $f$, $\mathsf{D}[fg] = (f \times f) \mathsf{D}[g]$;
\item  \label{linlem.post} If $g$ is linear then for every map $f$ which is pre-composable with $g$, $\mathsf{D}[fg] = \mathsf{D}[f]g$. 
\item \label{linlem.1}  Identity maps are linear;
\item  \label{linlem.0} Zero maps are linear;
\item  \label{linlem.pi} Projection maps $\pi_0$ and $\pi_1$ are linear;
\item  \label{linlem.comp} If $f$ and $g$ are linear and composable, then their composition $fg$ is linear;
\item  \label{linlem.pair} If $f$ and $g$ are linear and pairable, then their pairing $\langle f, g \rangle$ is linear;
\item  \label{linlem.prod} If $f$ and $g$ are linear, then their product $f \times g$ is linear;
\item \label{linlem.sum}  If $f$ and $g$ are linear and summable, then their sum $f+g$ is linear;
\item  \label{linlem.retract} If $f$ is a retract and linear, and if for a map $g$ which is post-composable with $f$ their composite ${fg}$ is linear, then $g$ is linear; 
\item  \label{linlem.iso} If $f$ is linear and an isomorphism, then its inverse $f^{-1}$ is also linear. 
\end{enumerate}
\end{lemma}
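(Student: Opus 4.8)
The plan is to prove each of the twelve clauses of Lemma~\ref{linlem} by unwinding Definition~\ref{linmapdef} (namely $f$ linear iff $\mathsf{D}[f] = \pi_1 f$) and applying the differential combinator axioms {\bf [CD.1]}--{\bf [CD.7]} together with the basic identities for $\oplus$, $\tau$, $c$, $\ell$ and pairings. Most clauses are short computations, so I would organize them so that the later, structural clauses can cite the earlier, foundational ones.

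First I would handle the ``base cases'' \eqref{linlem.1}, \eqref{linlem.0}, \eqref{linlem.pi}: identity maps, zero maps, and projections are linear directly from {\bf [CD.3]} (which gives $\mathsf{D}[1] = \pi_1$, $\mathsf{D}[\pi_0] = \pi_1\pi_0$, $\mathsf{D}[\pi_1] = \pi_1\pi_1$) together with {\bf [CD.1]} for the zero map. For \eqref{linlem.add}, assuming $\mathsf{D}[f] = \pi_1 f$, I would feed this into {\bf [CD.2]}: the equation $(1 \times \oplus_A)\mathsf{D}[f] = (1 \times \pi_0)\mathsf{D}[f] + (1 \times \pi_1)\mathsf{D}[f]$ becomes $(1 \times \oplus_A)\pi_1 f = (1 \times \pi_0)\pi_1 f + (1 \times \pi_1)\pi_1 f$, i.e.\ $\oplus_A f = \pi_0 f + \pi_1 f$ (using $\pi_1(1 \times \oplus_A) = \oplus_A \pi_1$ etc., after projecting), and $\langle 1, 0\rangle \mathsf{D}[f] = 0$ becomes $0 f = 0$; by Lemma~\ref{opluslem0}\eqref{opluslem3} this is exactly additivity. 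For \eqref{linlem.pre} and \eqref{linlem.post} I would use the chain rule {\bf [CD.5]}: $\mathsf{D}[fg] = \langle \pi_0 f, \mathsf{D}[f]\rangle \mathsf{D}[g]$. If $g$ is linear, substitute $\mathsf{D}[g] = \pi_1 g$ to get $\mathsf{D}[fg] = \mathsf{D}[f] g$, which is \eqref{linlem.post}; if $f$ is linear, substitute $\mathsf{D}[f] = \pi_1 f$ to get $\mathsf{D}[fg] = \langle \pi_0 f, \pi_1 f\rangle \mathsf{D}[g] = (f \times f)\mathsf{D}[g]$, which is \eqref{linlem.pre}.

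Next come the closure properties. For \eqref{linlem.comp}, if $f$ and $g$ are both linear, then by \eqref{linlem.post}, $\mathsf{D}[fg] = \mathsf{D}[f] g = \pi_1 f g = \pi_1 (fg)$, so $fg$ is linear. For \eqref{linlem.pair}, apply {\bf [CD.4]}: $\mathsf{D}[\langle f, g\rangle] = \langle \mathsf{D}[f], \mathsf{D}[g]\rangle = \langle \pi_1 f, \pi_1 g\rangle = \pi_1 \langle f, g\rangle$. For \eqref{linlem.prod}, write $f \times g = \langle \pi_0 f, \pi_1 g\rangle$ and combine \eqref{linlem.pi}, \eqref{linlem.comp}, \eqref{linlem.pair}. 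For \eqref{linlem.sum}, use {\bf [CD.1]}: $\mathsf{D}[f + g] = \mathsf{D}[f] + \mathsf{D}[g] = \pi_1 f + \pi_1 g = \pi_1(f + g)$, using that post-composition distributes over the sum in the left additive structure --- here I should be slightly careful, since left additivity only guarantees \emph{pre}-composition preserves sums, so I would note that $\pi_1 f + \pi_1 g = \pi_1(f+g)$ holds because it is precisely pre-composition by $\pi_1$ applied to $f + g$. For \eqref{linlem.retract}: $f$ is a retract means there is $s$ with $fs = 1$; given $fg$ linear and $f$ linear, I would compute $\mathsf{D}[g] = \mathsf{D}[s f g]$ (since $sfg = g$) $= \mathsf{D}[s(fg)]$ and use \eqref{linlem.pre} applied to... no --- more carefully, $g = s(fg)$ only if $sf = 1$, but $fs=1$; instead use $\mathsf{D}[fg] = \mathsf{D}[f]g = \pi_1 f g$ and also $\mathsf{D}[fg] = (f\times f)\mathsf{D}[g]$ by linearity of $f$ via \eqref{linlem.pre}; then precompose with $s \times s$ and use $fs=1$ to extract $\mathsf{D}[g] = \pi_1 g$. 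For \eqref{linlem.iso}, apply \eqref{linlem.retract} with $g = f^{-1}$, noting $f$ is a (split) retract and $f f^{-1} = 1$ is linear by \eqref{linlem.1}.

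I do not expect any single step to be a genuine obstacle --- this is a foundational lemma whose proof is a sequence of routine diagram chases. The one place requiring mild care is \eqref{linlem.retract} (getting the direction of the retraction and the cancellation right, including checking that $s$ need not itself be assumed linear), and a secondary subtlety is being scrupulous in \eqref{linlem.add} and \eqref{linlem.sum} about which variance of additivity the left additive structure provides, since post-composition is \emph{not} assumed additive in general; in both cases the needed distributivity is over pre-composition, so it is legitimate. I would present \eqref{linlem.1}, \eqref{linlem.0}, \eqref{linlem.pi} first, then \eqref{linlem.add}, then \eqref{linlem.pre}--\eqref{linlem.post}, then the closure clauses \eqref{linlem.comp}--\eqref{linlem.sum}, and finally \eqref{linlem.retract} and \eqref{linlem.iso}, so each clause may freely invoke the ones before it.
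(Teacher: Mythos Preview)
The paper does not supply its own proof of this lemma: it is quoted verbatim from \cite[Lemma 2.2.2]{blute2009cartesian} with no accompanying argument, so there is nothing to compare against. Your proposal is correct and is exactly the standard proof one would give: each clause follows from the corresponding axiom {\bf [CD.$n$]} together with $\mathsf{D}[f] = \pi_1 f$, and your ordering (base cases, then \eqref{linlem.add}--\eqref{linlem.post}, then closure, then \eqref{linlem.retract}--\eqref{linlem.iso}) is the natural dependency order.

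One small correction in \eqref{linlem.retract}: you have the direction of the retraction reversed. In diagrammatic order, ``$f: A \to B$ is a retract'' here means there is a section $s: B \to A$ with $sf = 1_B$ (not $fs = 1_A$); this is the convention used in the paper's proof of the analogous Lemma~\ref{lstable-lem}\eqref{lstable-lem.retract}. Your salvaged argument is then exactly right once you swap the letters: from $(f \times f)\mathsf{D}[g] = \mathsf{D}[fg] = \pi_1 f g$ precompose with $s \times s$ to get $(sf \times sf)\mathsf{D}[g] = \pi_1 s f g$, hence $\mathsf{D}[g] = \pi_1 g$. With $fs = 1_A$ as you wrote it, the cancellation $(s\times s)(f\times f) = 1$ would fail. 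Everything else is fine, including your careful remarks about which side of left-additivity is being invoked in \eqref{linlem.add} and \eqref{linlem.sum}.
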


It follows that the linear maps form a subcategory with finite bipoducts \cite[Corollary 2.2.3]{blute2009cartesian}. Although additive and linear maps often coincide in the examples, it is important to recall that, in general, while every linear map is additive, not every additive map is necessarily linear. 

 \begin{corollary}\label{corlin} In a Cartesian differential category:
 \begin{enumerate}[{\em (i)}]
\item \label{corlin.tau} The symmetry isomorphism $\tau: A \times B \to B \times A$ is linear; 
\item \label{corlin.c} The interchange isomorphism $c: (A \times B) \times (C \times D) \to (A \times C) \times (B \times D)$ is linear;
\item \label{corlin.ell} The lifting map $\ell: A \times D \to (A \times B) \times (C \times D)$ is linear; 
\item \label{corlin.oplus} The sum map $\oplus_A: A \times A \to A$ is linear. 
\end{enumerate}
 \end{corollary}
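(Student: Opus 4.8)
The plan is to derive all four statements directly from the closure properties of $\mathsf{D}$-linear maps recorded in Lemma~\ref{linlem}, together with the explicit formulas \eqref{taudef}, \eqref{cdef}, and \eqref{ldef} for $\tau$, $c$, and $\ell$. The facts we need are: identity maps, zero maps, and the projections $\pi_0$ and $\pi_1$ are all linear (Lemma~\ref{linlem}); and linearity is preserved under composition, pairing, products, and finite sums (also Lemma~\ref{linlem}).

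For~(i), $\tau = \langle \pi_1, \pi_0 \rangle$ is a pairing of two linear maps, hence linear. For~(ii), each of $\pi_0 \times \pi_0$ and $\pi_1 \times \pi_1$ (with the two factors having the appropriate, distinct domains) is a product of linear projections, hence linear, so that $c = \langle \pi_0 \times \pi_0, \pi_1 \times \pi_1 \rangle$ is a pairing of linear maps and therefore linear. For~(iii), $\langle 1, 0 \rangle$ and $\langle 0, 1 \rangle$ are each pairings of an identity map and a zero map, hence linear, so $\ell = \langle 1, 0 \rangle \times \langle 0, 1 \rangle$ is a product of linear maps and therefore linear. For~(iv), $\oplus_A = \pi_0 + \pi_1$ is a sum of linear maps, hence linear by the sum clause of Lemma~\ref{linlem}.

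Since each case is an immediate invocation of a clause of Lemma~\ref{linlem}, there is no genuine obstacle here; the only care required is to match the correct closure clause to the correct categorical operation — in particular, to distinguish the pairing $\langle-,-\rangle$ from the product $\times$ appearing in the formulas for $\tau$, $c$, and $\ell$, and to keep track of the fact that the two copies of $\pi_0$ occurring in $c$ have different domains. (One could instead verify each case by hand from the definition $\mathsf{D}[h] = \pi_1 h$ using {\bf [CD.1]}, {\bf [CD.3]}, and {\bf [CD.4]}, but routing everything through Lemma~\ref{linlem} avoids all computation.)
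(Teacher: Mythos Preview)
Your proof is correct and is exactly the intended argument: the paper states this as a corollary of Lemma~\ref{linlem} without giving a proof, and your case-by-case application of the closure clauses (identities, zeros, projections, pairing, product, sum) to the explicit formulas \eqref{taudef}, \eqref{cdef}, \eqref{ldef}, and $\oplus_A = \pi_0 + \pi_1$ is precisely how it is meant to follow.
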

 
 A key observation for this paper is that $f$ is linear if and only if $\langle 0,1 \rangle \mathsf{D}[f]$ is linear.  We shall use this fact to construct the \emph{linearizing combinator} of a Cartesian differential category (see Proposition \ref{DLprop}).  This observation 
 was proven in \cite[Corollary 2.2.3]{blute2009cartesian} and we repeat it here for completeness:
 
 \begin{lemma}   \label{linlemimportant} 
 In a Cartesian differential category, 
  \begin{enumerate}[{\em (i)}]
\item\label{linlemimportant1} For any map $f$, $\langle 0,1 \rangle \mathsf{D}[f]$ is linear. 
\item\label{linlemimportant2} $f$ is linear if and only if $f = \langle 0,1 \rangle \mathsf{D}[f]$.
\end{enumerate}
 \end{lemma}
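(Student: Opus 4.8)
The plan is to prove both parts directly from the differential combinator axioms, using the chain rule {\bf [CD.5]} and the linearity axiom {\bf [CD.2]} as the main tools, together with {\bf [CD.6]} and {\bf [CD.7]}.

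For part \emph{(\ref{linlemimportant1})}, I want to show that $g := \langle 0,1 \rangle \mathsf{D}[f]$ satisfies $\mathsf{D}[g] = \pi_1 g$. First I would compute $\mathsf{D}[g]$ by writing $g$ as the composite $\langle 0, 1\rangle \mathsf{D}[f]$, i.e. as $h\, \mathsf{D}[f]$ where $h = \langle 0,1\rangle : A \to A \times A$. Note $h = \langle 0, 1 \rangle$ is linear by Lemma \ref{linlem} (it is a pairing of a zero map and an identity, both linear by \emph{(\ref{linlem.0})} and \emph{(\ref{linlem.1})}, hence linear by \emph{(\ref{linlem.pair})}). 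So by Lemma \ref{linlem}\emph{(\ref{linlem.pre})}, $\mathsf{D}[g] = \mathsf{D}[h\,\mathsf{D}[f]] = (h \times h)\, \mathsf{D}\!\left[\mathsf{D}[f]\right]$. Now $h \times h = \langle 0,1\rangle \times \langle 0,1\rangle$, which is precisely the lifting map $\ell$ of (\ref{ldef}) (with the appropriate objects), so $\mathsf{D}[g] = \ell\, \mathsf{D}\!\left[\mathsf{D}[f]\right] = \mathsf{D}[f]$ by {\bf [CD.6]}. On the other hand, $\pi_1 g = \pi_1 \langle 0,1\rangle \mathsf{D}[f] = \langle 0,1\rangle \pi_1 \mathsf{D}[f]$; hmm, that is not obviously $\mathsf{D}[f]$, so I need to be more careful. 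The cleaner route: I should instead directly verify $\mathsf{D}[g] = \pi_1 g$ by computing both sides as maps $(A\times A) \to B$, using {\bf [CD.2]} (which gives $\langle 1,0\rangle \mathsf{D}[f] = 0$ and the additivity-in-second-variable identity) to show $\pi_1 \langle 0,1\rangle \mathsf{D}[f]$, precomposed appropriately, collapses correctly. Actually the standard argument uses Lemma \ref{linlem}\emph{(\ref{linlem.post})}: since $g$ itself will turn out linear, but that is circular. The correct known argument is: $g$ is linear iff $\mathsf{D}[g] = \pi_1 g$, and we compute $\mathsf{D}[g] = \ell\,\mathsf{D}[\mathsf{D}[f]] = \mathsf{D}[f]$ as above, while $\pi_1 g = \pi_1\langle 0,1\rangle\mathsf{D}[f]$; so it suffices to show $\mathsf{D}[f] = \pi_1 \langle 0,1\rangle \mathsf{D}[f]$ as maps $A\times A \to B$, but that is false in general. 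The resolution is that I have the objects wrong: $g : A \to B$, so $\mathsf{D}[g] : A \times A \to B$ and $\pi_1 g : A \times A \to B$ with $\pi_1 : A\times A \to A$. Then $\pi_1 g = \pi_1 \langle 0,1\rangle \mathsf{D}[f] = \langle 0\pi_1, \pi_1 \rangle \mathsf{D}[f] = \langle 0, \pi_1\rangle\mathsf{D}[f]$ (here $0, \pi_1 : A\times A \to A$). And $\mathsf{D}[g] = \ell\,\mathsf{D}[\mathsf{D}[f]]$ where now $\ell : (A\times A) \to (A\times A)\times(A\times A)$; using {\bf [CD.6]} on $\mathsf{D}[f]$ viewed with domain $A\times A$... this requires care about which lifting map. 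I would work this out by expanding $\mathsf{D}[\mathsf{D}[f]]$ and using {\bf [CD.2]} applied to $\mathsf{D}[f]$ to kill the terms with zeros, landing on $\langle 0,\pi_1\rangle \mathsf{D}[f]$. This bookkeeping with the correct instances of $\ell$ and {\bf [CD.2]} is the main obstacle.

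For part \emph{(\ref{linlemimportant2})}, the $(\Leftarrow)$ direction is immediate: if $f = \langle 0,1\rangle\mathsf{D}[f]$ then $f$ is linear by part \emph{(\ref{linlemimportant1})}. For $(\Rightarrow)$, suppose $f$ is linear, so $\mathsf{D}[f] = \pi_1 f$. Then $\langle 0,1\rangle\mathsf{D}[f] = \langle 0,1\rangle \pi_1 f = 1 f = f$, using $\langle 0,1\rangle \pi_1 = 1$. This direction is a one-line computation.

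So the proof splits cleanly: part \emph{(\ref{linlemimportant2})} is essentially free given part \emph{(\ref{linlemimportant1})} plus the trivial computation $\langle 0,1\rangle\pi_1 = 1$, and all the real work is in part \emph{(\ref{linlemimportant1})}, where the strategy is to write $\langle 0,1\rangle\mathsf{D}[f]$ as a composite of the linear map $\langle 0,1\rangle$ with $\mathsf{D}[f]$, apply Lemma \ref{linlem}\emph{(\ref{linlem.pre})} to push the differential through, recognize the resulting prefactor as (an instance of) the lifting map $\ell$, apply {\bf [CD.6]}, and finally reconcile the result with $\pi_1\langle 0,1\rangle\mathsf{D}[f]$ using {\bf [CD.2]}. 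I expect the indexing of $\ell$ and the precise form of the {\bf [CD.2]} application to be the only genuinely fiddly points; everything else is forced.
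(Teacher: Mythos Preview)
Your plan for part \emph{(\ref{linlemimportant2})} is correct and matches the paper exactly.

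For part \emph{(\ref{linlemimportant1})}, there is a genuine gap. Your claimed identification of $\langle 0,1\rangle \times \langle 0,1\rangle$ with the lifting map $\ell$ is wrong: by definition (\ref{ldef}), $\ell = \langle 1,0\rangle \times \langle 0,1\rangle$, not $\langle 0,1\rangle \times \langle 0,1\rangle$. Written out in coordinates, $(\langle 0,1\rangle \times \langle 0,1\rangle) = \langle \langle 0,\pi_0\rangle, \langle 0,\pi_1\rangle\rangle$ whereas $\ell = \langle \langle \pi_0,0\rangle, \langle 0,\pi_1\rangle\rangle$; they differ in the first component. So you cannot apply {\bf [CD.6]} directly, and your summary strategy (``recognize the resulting prefactor as an instance of $\ell$, apply {\bf [CD.6]}, then reconcile using {\bf [CD.2]}'') does not go through as stated.

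The missing ingredient is {\bf [CD.7]}, which you never invoke. The paper first applies {\bf [CD.7]} to rewrite $\langle \langle 0,\pi_0\rangle, \langle 0,\pi_1\rangle\rangle \mathsf{D}[\mathsf{D}[f]]$ as $\langle \langle 0,0\rangle, \langle \pi_0,\pi_1\rangle\rangle \mathsf{D}[\mathsf{D}[f]]$ (swapping the two inner slots), then uses {\bf [CD.2]} to split the second pair as $\langle \pi_0,0\rangle + \langle 0,\pi_1\rangle$. The $\langle 0,\pi_1\rangle$-summand becomes $\langle 0,\pi_1\rangle\,\ell\,\mathsf{D}[\mathsf{D}[f]] = \langle 0,\pi_1\rangle \mathsf{D}[f] = \pi_1\langle 0,1\rangle\mathsf{D}[f]$ via {\bf [CD.6]}, while the $\langle \pi_0,0\rangle$-summand requires a \emph{second} use of {\bf [CD.7]} (to move the nonzero entry back into the outer-right slot) before {\bf [CD.2]} kills it. Without {\bf [CD.7]} you cannot rearrange the nonzero entry sitting in the inner-left position of $\langle \langle 0,\pi_0\rangle, \langle 0,\pi_1\rangle\rangle$, and neither {\bf [CD.2]} nor {\bf [CD.6]} alone sees that slot.
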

 \begin{proof} For (\ref{linlemimportant1}), we must show that $\mathsf{D}\left[ \langle 0,1 \rangle \mathsf{D}[f] \right] = \pi_1 \langle 0,1 \rangle \mathsf{D}[f]$. First note that by Lemma \ref{linlem}.(\ref{linlem.1}), (\ref{linlem.0}), and (\ref{linlem.pair}) it follows that $\langle 0,1 \rangle$ is linear. Therefore, we compute that: 
   \begin{align*}
\mathsf{D} \left[ \langle 0,1 \rangle \mathsf{D}[f] \right] &=~ (\langle 0,1 \rangle \times \langle 0,1 \rangle) \mathsf{D}[\mathsf{D}[f]] \tag{$\langle 0,1 \rangle$ is linear + Lem.\ref{linlem}.(\ref{linlem.pre})} \\
 &=~ \langle \langle 0,\pi_0 \rangle, \langle 0,\pi_1 \rangle \rangle \mathsf{D}[\mathsf{D}[f]] \\ 
 &=~ \langle \langle 0,0 \rangle, \langle \pi_0,\pi_1 \rangle \rangle \mathsf{D}[\mathsf{D}[f]] \tag*{\textbf{[CD.7]}} \\
 &=~ \langle \langle 0,0 \rangle, \langle \pi_0,0 \rangle + \langle 0,\pi_1 \rangle \rangle \mathsf{D}[\mathsf{D}[f]]  \\
 &=~ \langle \langle 0,0 \rangle, \langle \pi_0,0 \rangle\rangle \mathsf{D}[\mathsf{D}[f]] +  \langle  \langle 0,0 \rangle,  \langle 0,\pi_1 \rangle \rangle  \mathsf{D}[\mathsf{D}[f]] \tag*{\textbf{[CD.2]}} \\
 &=~  \langle \langle 0,\pi_0 \rangle, \langle 0,0 \rangle\rangle \mathsf{D}[\mathsf{D}[f]] + \langle  0,\pi_1 \rangle \mathsf{D}[f] \tag*{\textbf{[CD.7]} + \textbf{[CD.6]}} \\
  &=~  \langle \langle 0,\pi_0 \rangle, 0 \rangle \mathsf{D}[\mathsf{D}[f]] + \langle  0,\pi_1 \rangle \mathsf{D}[f] \\
 &=~ 0 + \langle  0,\pi_1 \rangle \mathsf{D}[f] \tag*{\textbf{[CD.2]}} \\
 &=~  \pi_1 \langle 0,1 \rangle \textsf{D}[f]
 \end{align*}
 So we conclude that $\langle 0,1 \rangle \mathsf{D}[f]$ is linear. Now suppose that $f$ is linear, then we compute: 
  \begin{align*}
\langle 0,1 \rangle \textsf{D}[f] &=~ \langle 0,1 \rangle \pi_1 f \tag{$f$ is linear}\\
&=~ f
 \end{align*}
 So $f = \langle 0,1 \rangle \mathsf{D}[f]$. Conversely, suppose that $f = \langle 0,1 \rangle \mathsf{D}[f]$. By (\ref{linlemimportant1}), $\langle 0,1 \rangle \mathsf{D}[f]$ is linear and so $f$ is also linear. 
 \end{proof}

We conclude this section by providing examples of Cartesian differential categories. The canonical example of a Cartesian differential category is the category of real smooth functions. The main motivating example for this paper is, however, the abelian functor calculus model of \cite{bauer2018directional}. Many other interesting examples of Cartesian differential categories can be found throughout the literature such as smooth functions, polynomials, any category with finite biproducts, cofree Cartesian differential categories \cite{cockett2011faa,lemay2018tangent}, the coKleisli category of a differential category \cite{Blute2019,blute2006differential}, convenient vector spaces \cite{blute2010convenient,kriegl1997convenient,manzyuk2012tangent}, and categorical models of the differential $\lambda$-calculus \cite{bucciarelli2010categorical,Cockett-2019,EHRHARD20031,manzonetto_2012}. 

\begin{example}\label{ex:biproduct} \normalfont Every category with finite biproducts is a Cartesian differential category where the differential combinator is defined as:
\[\mathsf{D}[f] = \pi_1 f\]
In this case, every map is linear by definition. The converse is also true: a Cartesian differential category where every map is linear is precisely a category with finite biproducts. 
\end{example}

\begin{example}\label{ex:smooth} \normalfont  Let $\mathbb{R}$ be the set of real numbers. Define $\mathsf{SMOOTH}$ as the category whose objects are the Euclidean real vector spaces $\mathbb{R}^n$ (including the singleton $\mathbb{R}^0 = \lbrace \top \rbrace$) and whose maps are the real smooth functions ${F: \mathbb{R}^n \to \mathbb{R}^m}$ between them. $\mathsf{SMOOTH}$ is a Cartesian differential category where the differential combinator is defined as the directional derivative of a smooth function. Recall that a smooth function $F: \mathbb{R}^n \to \mathbb{R}^m$ is in fact a tuple $F = \langle f_1, \hdots, f_m \rangle$ of smooth functions $f_i: \mathbb{R}^n \to \mathbb{R}$. Then the Jacobian matrix of $F$ at vector $\vec x \in \mathbb{R}^n$ is the matrix $\nabla(F)(\vec x)$ of size $m \times n$ whose coordinates are the partial derivatives of the $f_i$: 
\[ \nabla(F)(\vec x) := \begin{bmatrix} \frac{\partial f_1}{\partial x_1}(\vec x) & \frac{\partial f_1}{\partial x_2}(\vec x) & \hdots & \frac{\partial f_1}{\partial x_n}(\vec x) \\
 \frac{\partial f_2}{\partial x_1}(\vec x) & \frac{\partial f_2}{\partial x_2}(\vec x) & \hdots & \frac{\partial f_2}{\partial x_n}(\vec x)  \\
 \vdots & \vdots & \vdots & \vdots \\
  \frac{\partial f_m}{\partial x_1}(\vec x) & \frac{\partial f_m}{\partial x_2}(\vec x) & \hdots & \frac{\partial f_m}{\partial x_n}(\vec x) 
\end{bmatrix} \]
So for a smooth function $F: \mathbb{R}^n \to \mathbb{R}^m$, its derivative $\mathsf{D}[F]: \mathbb{R}^n \times \mathbb{R}^n \to \mathbb{R}^m$ is then defined as:
\[\mathsf{D}[F](\vec x, \vec y) := \nabla(F)(\vec x) \cdot \vec y = \left \langle \sum \limits^n_{i=1} \frac{\partial f_1}{\partial x_i}(\vec x) y_i, \hdots, \sum \limits^n_{i=1} \frac{\partial f_n}{\partial x_i}(\vec x) y_i \right \rangle\]
where $\cdot$ is matrix multiplication and $\vec y$ is seen as a $n \times 1$ matrix. 
A smooth function $F: \mathbb{R}^n \to \mathbb{R}^m$ is linear in the Cartesian differential sense precisely when it is $\mathbb{R}$-linear in the classical sense, that is, $F(s \vec x + t \vec y) = sF(\vec x) + t F(\vec y)$ for all $s,t \in \mathbb{R}$ and $\vec x, \vec y \in \mathbb{R}^n$. 
\end{example}

\begin{example} \normalfont We very briefly review the abelian functor calculus model of a Cartesian differential category: for more complete details on this example see \cite{bauer2018directional}. Let $\mathbb{A}$ be an abelian category and let $\mathsf{Ch}(\mathbb{A})$ be its category of (non-negative) chain complexes. Define $\mathsf{HoAbCat}_\mathsf{Ch}$ as the category whose objects are abelian categories where a map from $\mathbb{A} \to \mathbb{B}$ is a point-wise chain homotopy equivalence class of functors $\mathbb{A} \to \mathsf{Ch}(\mathbb{B})$, and where composition and identity maps are defined as in \cite[Definition 3.5]{bauer2018directional}. By \cite[Corollary 6.6]{bauer2018directional}, $\mathsf{HoAbCat}_\mathsf{Ch}$ is a Cartesian differential category where the differential combinator, which in this case is written as $\nabla$, is defined for $F: \mathbb{A} \to \mathsf{Ch}(\mathbb{B})$ as follows on objects:
\[ \nabla F (X,V) := D_1 F(X \oplus -)(V)  \]
where $D_1$ is the linearization operator as defined in \cite[Definition 5.1]{bauer2018directional} using cross effects of functors. In this case, a functor $F$ is linear in the Cartesian differential sense if it is linear in the abelian functor calculus sense, that is, if $F$ preserves finite direct sums up to chain homotopy equivalence \cite[Definition 5.5]{bauer2018directional}. 
\end{example}

\begin{example} \normalfont Every Cartesian left additive category has a cofree Cartesian differential category over it which satisfies the obvious couniversal property. Cofree Cartesian differential categories were first constructed in \cite{cockett2011faa} using the Fa\`a di Bruno construction. In this paper, we will use the alternative construction found in \cite{lemay2018tangent}, as the differential combinator is simpler to express. For a Cartesian left additive category $\mathbb{X}$, let $\mathsf{P}: \mathbb{X} \to \mathbb{X}$ be the product functor defined on objects as $\mathsf{P}(A) = A \times A$ and on maps as $\mathsf{P}(f) = f \times f$. Then define $\mathcal{D}(\mathbb{X})$ as the category whose objects are the same as $\mathbb{X}$ and where a map $A \to B$ is a $\mathsf{D}$-sequence which is a sequence of maps $(f_0, f_1, \hdots)$ where $f_n: \mathsf{P}^n(A) \to B$ and satisfying the coherences found in \cite[Definition 4.2]{lemay2018tangent}. Composition and identity maps are defined as in \cite[Definition 3.6]{lemay2018tangent}. $\mathcal{D}(\mathbb{X})$ is a Cartesian differential category \cite[Corollary 4.25]{lemay2018tangent} where the differential combinator is defined by shifting $\mathsf{D}$-sequences to the left:
\[ \mathsf{D}[(f_0, f_1, \hdots)] = (f_1, f_2, \hdots) \]
A $\mathsf{D}$-sequence $(f_0, f_1, \hdots)$ is linear if and only if $f_n = \underbrace{\pi_1 \hdots \pi_1}_{n-times} f_0$ for all $n$ \cite[Lemma 4.26]{lemay2018tangent}. 
\end{example}

\begin{example} \normalfont An important source of examples of Cartesian differential categories are the coKleisli categories of differential categories. For a more on differential categories, see \cite{Blute2019,blute2006differential}. A differential category \cite[Definition 2.4]{blute2006differential} is an additive symmetric monoidal category $\mathbb{X}$ equipped with a comonad $(\oc, \delta_A: \oc A \to \oc \oc A, \varepsilon_A: \oc A \to A)$, two natural transformations $\Delta_A: \oc A \to \oc A \otimes \oc A$ and $e_A: \oc A \to I$ such that $\oc A$ is a cocommutative comonoid, and a natural transformation called a deriving transformation $\mathsf{d}_A: \oc A \otimes A \to \oc A$  satisfying certain coherences which capture the basic properties of differentiation \cite[Definition 7]{Blute2019}. Examples of differential categories can be found in \cite[Section 9]{Blute2019}. When a differential category $\mathbb{X}$ has finite products, define the natural transformation $\chi_{A,B}: \oc(A \times B) \to \oc A \otimes \oc B$ as follows: 
\[\chi_{A,B} := \xymatrixcolsep{5pc}\xymatrix{\oc(A \times B) \ar[r]^-{\Delta_{A \times B}} & \oc(A \times B) \otimes \oc(A \times B) \ar[r]^-{\oc(\pi_0) \otimes \oc(\pi_1)} &\oc A \otimes \oc B
 } \]
By \cite[Proposition 3.2.1]{blute2009cartesian}, for a differential category $\mathbb{X}$ with finite products, its coKleisli category $\mathbb{X}_\oc$ is a Cartesian differential category where the differential combinator is defined using the deriving transformation. For a coKlesili map $f: \oc A \to B$, its derivative $\mathsf{D}[f]: \oc(A \times A) \to B$ is defined as: 
\[ \mathsf{D}[f] := \xymatrixcolsep{4pc}\xymatrix{\oc(A \times A) \ar[r]^-{\chi_{A,A}} & \oc A \otimes \oc A \ar[r]^-{1 \otimes \varepsilon_A} & \oc A \otimes A \ar[r]^-{\mathsf{d}_A} & \oc A \ar[r]^-{f} & B 
 } \]
Applying Lemma \ref{linlemimportant}.(\ref{linlemimportant2}) and being careful with coKleisli composition, one can show that a coKleisli map $f: \oc A \to B$ is linear if and only if $\Delta_A (\oc(0) \otimes \varepsilon_A) \mathsf{d}_A f = f$. In particular, for every map ${g: A \to B}$ in $\mathbb{X}$, $\varepsilon_A g: \oc A \to B$ is a linear map in the coKleisli category $\mathbb{X}_\oc$. 
\end{example}

\begin{example}\label{ex:diffstor} \normalfont A differential storage category \cite[Definition 10]{Blute2019} is a differential category with finite products such that $\chi_{A,B}$ and $e_\top$ are natural isomorphisms, called the Seely isomorphisms, so that $\oc(A \times B) \cong \oc A \otimes \oc B$ and $\oc \top \cong I$. In this case, the Seely isomorphisms induce two extra natural transformations $\nabla_A: \oc A \otimes \oc A \to \oc A$ and $u_A: I \to \oc A$ which make $\oc A$ into a bialgebra. Furthermore, the differential structure can equivalently be axiomatized in terms of a natural transformation ${\eta_A: A \to \oc A}$ called a codereliction \cite[Definition 9]{Blute2019}, that is, there is a bijective correspondence between coderelictions and deriving transformations \cite[Theorem 4]{Blute2019}. Given a codereliction $\eta$, one defines a deriving transformation $\mathsf{d}$ as follows: 
\[ \mathsf{d} := \xymatrixcolsep{5pc}\xymatrix{ \oc A \otimes A \ar[r]^-{1 \otimes \eta_A} & \oc A \otimes \oc A \ar[r]^-{\nabla_A} & \oc A 
 }\]
 and conversely, given deriving transformation $\mathsf{d}$, one defines a codereliction $\eta$ as follows: 
\[ \eta := \xymatrixcolsep{5pc}\xymatrix{ A \ar[r]^-{u_A \otimes 1} & \oc A \otimes A \ar[r]^-{\mathsf{d}_A} & \oc A 
 } \]
 and these constructions are inverses of each other. As such, for a coKlesili map $f: \oc A \to B$, its derivative $\mathsf{D}[f]: \oc(A \times A) \to B$ could also be expressed as follows:
 \[ \mathsf{D}[f] := \xymatrixcolsep{3pc}\xymatrix{\oc(A \times A) \ar[r]^-{\chi_{A,A}} & \oc A \otimes \oc A \ar[r]^-{1 \otimes \varepsilon_A} & \oc A \otimes A \ar[r]^-{1 \otimes \eta_A} & \oc A \otimes \oc A \ar[r]^-{\nabla_A} & \oc A  \ar[r]^-{f} & B 
 } \]
For a differential storage category $\mathbb{X}$, the linear maps in the coKleisli category $\mathbb{X}_\oc$ are precisely those of the form $\varepsilon_A g: \oc A \to B$ for a map $g: A \to B$ in $\mathbb{X}$. 
\end{example}

\begin{example} \normalfont The category of convenient vector spaces and smooth functions between them is an example of a coKleisli category of a differential storage category \cite{blute2010convenient, manzyuk2012tangent}. For a detailed introduction to convenient vector spaces, see \cite{kriegl1997convenient}. Briefly, recall that a locally convex space $E$ is a topological $\mathbb{R}$-vector space which is Hausdorff and such that $0$ has a neighbourhood basis of convex sets, and therefore we have a notion of converging limits. A curve of $E$ is a function $\phi: \mathbb{R} \to E$ and we say that a curve $\phi$ is differentiable if the limit:
\[\psi(x):= \lim \limits_{t \to 0} \frac{\phi(x+t) - \phi(x)}{t}\]
exists for all $x \in E$, and this defines a curve $\psi: \mathbb{R} \to E$ which is called the derivative of $\phi$. A curve $\phi$ is smooth if all its iterated derivatives exists, i.e, if it is infinitely differentiable. A convenient vector space \cite[Theorem 2.14]{kriegl1997convenient} is a locally convex space $E$ such that for every smooth curve $\phi$ there exists a smooth curve $\tilde{\phi}$ such that $\tilde{\phi}^\prime = \phi$. Alternatively, a convenient vector space is a locally convex vector space which $c^\infty$-complete (which is called Mackey complete in \cite[Definition 3.15]{blute2010convenient}) If $E$ and $F$ are both convenient vector spaces, then a smooth function $f: E \to F$ is a function $f$  which preserves smooth curves, that is, if $\phi$ is a smooth curve of $E$ then $\phi f$ is a smooth curve of $F$. Let $\mathsf{CON}$ be the category of convenient vector spaces and smooth functions between them. By \cite[Theorem 6.3]{blute2010convenient}, $\mathsf{CON}$ is isomorphic to the coKleisli category of a comonad on $\mathsf{CON}_{lin}$, the category of convenient vector spaces and smooth \emph{linear} functions (i.e. smooth functions which are also $\mathbb{R}$-linear). Furthermore, $\mathsf{CON}_{lin}$ is a differential storage category \cite[Theorem 6.6]{blute2010convenient} and therefore $\mathsf{CON}$ is a Cartesian differential category (see \cite[Example 2.4.2]{manzyuk2012tangent} for full details). For a smooth function ${f: E \to F}$, its derivative $\mathsf{D}[f]: E \times E \to F$ is defined as follows: 
\[ \mathsf{D}[f](x, y) := \lim \limits_{t \to 0} \frac{f(x + t \cdot y) - f(x)}{t} \]
where $t \in \mathbb{R}$ and $\cdot$ is scalar multiplication. In $\mathsf{CON}$, a smooth function is linear in the Cartesian differential sense precisely when it is a (smooth) linear function in the ordinary sense of linear algebra.  Lastly, note that $\mathsf{SMOOTH}$ is a Cartesian differential subcategory of $\mathsf{CON}$. 
\end{example}

\section{Linearizing Combinators}\label{LINsec}

In this section we introduce the notion of a \emph{linearizing combinator} for a Cartesian left additive category. Linearizing combinators are generalizations of the linearization operation used in the abelian functor calculus \cite[Definition 5.1]{bauer2018directional}. In fact, we will show that every Cartesian differential category comes equipped with a canonical linearizing combinator. The basic idea is that a linearizing combinator produces the linear approximation of maps. 

\begin{definition}\label{lindef} A \textbf{linearizing combinator} $\mathsf{L}$ on a Cartesian left additive category $\mathbb{X}$ is a family of operators, for each $A,B \in \mathbb{X}$
\[ {\mathsf{L}: \mathbb{X}(A,B) \to \mathbb{X}(A,B)}; f \mapsto \mathsf{L}[f] \]
such that the following six axioms hold:
\begin{enumerate}[{\bf [L.1]}]
\item $\mathsf{L}[f+g]=\mathsf{L}[f]+\mathsf{L}[g]$ and $\mathsf{L}[0]=0$
\item $\mathsf{L}[f]$ is additive, or equivalently by Lemma \ref{opluslem0}.(\ref{opluslem3}), $\oplus_A \mathsf{L}[f] = \pi_0 \mathsf{L}[f]+ \pi_1 \mathsf{L}[f]$ and $0 \mathsf{L}[f]=0$; 
\item $\mathsf{L}[1]=1$, $\mathsf{L}[\pi_0]=\pi_0$, and $\mathsf{L}[\pi_1]=\pi_1$
\item $\mathsf{L}\left[ \langle f, g \rangle \right] = \left \langle \mathsf{L}[f], \mathsf{L}[g] \right \rangle $
\item $\mathsf{L}[fg] = \mathsf{L}[f]~\mathsf{L}\left[(1+0f)g\right]$
\item $\mathsf{L}\left[\mathsf{L}[f]\right] = \mathsf{L}[f]$
\end{enumerate}
The expression $\mathsf{L}[f]$ is called the \textbf{linearization} of $f$. 
\end{definition}

The basic intuition of a linearizing combinator $\mathsf{L}$ is that from an arbitrary map $f$, $\mathsf{L}$ produces a linear map $\mathsf{L}[f]$. Examples of linearizing combinators can be found at the end of this section. The motivating example of a linearizing combinator is the linearization operator from abelian functor calculus \cite[Definition 5.1]{bauer2018directional}. The main source of examples of linearizing combinators come from Cartesian differential categories, as we will see in Proposition \ref{DLprop} below, where the linearizing combinator is defined as the differential combinator evaluated at zero in the first argument. Indeed as explained in Lemma \ref{linlemimportant}(\ref{linlemimportant1}), for every map $f$, the composite $\langle 0,1 \rangle \mathsf{D}[f]$ is a linear map. As a simple example, let $f: \mathbb{R} \to \mathbb{R}$ be a smooth function, then $\mathsf{L}[f]: \mathbb{R} \to \mathbb{R}$ is the $\mathbb{R}$-linear map defined as the degree 1 term of the Taylor expansion of $f$, that is, $\mathsf{L}[f](x) = f^\prime(0)x$. 

The axioms of a linearizing combinator are analogues of the first six axioms of a differential combinator. {\bf [L.1]} says that the linearization of a sum of maps is equal to the sum of linearization of maps. {\bf [L.2]} says the linearization of a map is additive. {\bf [L.3]} tells us that identity maps and projection maps are already linearized. {\bf [L.4]} says that the linearization of a pairing of maps is same as the pairing of the linearization of maps. {\bf [L.5]} tells how to linearize a composite of maps, ie., the chain rule for linearization. And lastly, {\bf [L.6]} says that the linearizing combinator is idempotent, that is, since a linearization of a map is already linearized, to apply the linearization combinator twice is the same as doing it once. These axioms can also be found throughout \cite{bauer2018directional}. Indeed, \textbf{[L.1]} is \cite[Lemma 5.6.ii]{bauer2018directional}, \textbf{[L.2]} is \cite[Lemma 5.6.i]{bauer2018directional}, \textbf{[L.3]} is \cite[Lemma 5.16]{bauer2018directional}, \textbf{[L.4]} is \cite[Lemma 5.18]{bauer2018directional}, and \textbf{[L.5]} is a generalization of \cite[Propostion 5.10]{bauer2018directional}. 

The keen eyed reader may have noticed that on the right hand side of {\bf [L.5]}, $\mathsf{L}\left[(1+0f)g\right]$ is a linearization of a composite of maps. In theory one could again apply {\bf [L.5]} to $\mathsf{L}\left[(1+0f)g\right]$. However, the following calculation shows us that doing so does not result in any simplification: 
\begin{align*}
\mathsf{L}\left[(1+0f)g\right] &= \mathsf{L}[1 + 0f] \mathsf{L}\left[\left(1+0(1+0f) \right)g\right] \tag*{\bf [L.5]} \\
&= \mathsf{L}[1 + 0f] \mathsf{L}\left[\left(1+0+0f \right)g\right] \\
&= \mathsf{L}[1 + 0f] \mathsf{L}\left[(1+0f)g\right] \\
&= \left( \mathsf{L}[1] + \mathsf{L}[0f] \right) \mathsf{L}\left[(1+0f)g\right] \tag*{\bf [L.1]} \\ 
&=  \left( 1 + \mathsf{L}[0]\mathsf{L}\left[(1+0)f\right] \right) \mathsf{L}\left[(1+0f)g\right] \tag*{\bf [L.3] + \bf [L.5]} \\
&=  \left( 1 + 0\mathsf{L}\left[ f\right] \right) \mathsf{L}\left[(1+0f)g\right] \tag*{\bf [L.1]} \\
&=  \left( 1 + 0 \right) \mathsf{L}\left[(1+0f)g\right] \tag*{\bf [L.2]} \\
&= \mathsf{L}\left[(1+0f)g\right] 
\end{align*}
So {\bf [L.5]} is indeed simplified as far as possible. That said, {\bf [L.5]} does simplify when the maps are either reduced, semi-additive, or additive. 

\begin{lemma}\label{linprop1} Let $\mathsf{L}$ be a linearizing combinator on a Cartesian left additive category. 
\begin{enumerate}[{\em (i)}]
\item \label{linprop1.i} If $f: A \to B$ is constant then $\mathsf{L}[f]= 0$. 
\item \label{linprop1.ii} For a reduced map $f$ and any map $g$, $\mathsf{L}[fg]= \mathsf{L}[f]~\mathsf{L}[g]$. 
\item \label{linprop1.iii} For a semi-additive map $g$ and any map $f$, $\mathsf{L}[fg]= \mathsf{L}[f]~\mathsf{L}[g]$.
\end{enumerate}
\end{lemma}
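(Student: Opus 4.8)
The plan is to derive each of the three statements directly from the linearizing combinator axioms, primarily \textbf{[L.5]} together with the characterization of constant, reduced, and semi-additive maps. For (\ref{linprop1.i}), if $f$ is constant then $0f = f$. Applying \textbf{[L.5]} to the composite $1f$ (or directly manipulating $\mathsf{L}[f] = \mathsf{L}[1f]$), we get $\mathsf{L}[f] = \mathsf{L}[1]\,\mathsf{L}[(1+0\cdot 1)f] = \mathsf{L}[(1+0)f] = \mathsf{L}[f]$, which is unhelpful; instead I would write $f$ itself as a composite whose first factor is $f$, say $\mathsf{L}[f] = \mathsf{L}[f \circ 1_B]$. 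Then \textbf{[L.5]} gives $\mathsf{L}[f] = \mathsf{L}[f]\,\mathsf{L}[(1+0f)1_B] = \mathsf{L}[f]\,\mathsf{L}[1+0f]$. Now $0f = f$ (constancy), so $1+0f = 1+f$ as a map $B\to B$, and $\mathsf{L}[1+f] = \mathsf{L}[1]+\mathsf{L}[f] = 1 + \mathsf{L}[f]$ by \textbf{[L.1]} and \textbf{[L.3]}. Hence $\mathsf{L}[f] = \mathsf{L}[f](1+\mathsf{L}[f]) = \mathsf{L}[f] + \mathsf{L}[f]\mathsf{L}[f] = \mathsf{L}[f] + \mathsf{L}[f]$ using \textbf{[L.6]} (idempotence, $\mathsf{L}[\mathsf{L}[f]] = \mathsf{L}[f]$, so post-composing $\mathsf{L}[f]$ with itself... wait — need $\mathsf{L}[f]\mathsf{L}[f]$, which is composition, not $\mathsf{L}$ applied twice). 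Let me reconsider: the cleaner route is to note that for a constant $f$, $f = 0f$ and $0: A\to B$ factors through $\top$; more simply, constancy means $f$ factors as $A \to \top \xrightarrow{c} B$ for some $c$, and then \textbf{[L.5]} applied to this factorization, combined with $\mathsf{L}$ of the map into $\top$ being $0$, forces $\mathsf{L}[f] = 0$. I would work out the precise factorization argument here.

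For (\ref{linprop1.ii}), suppose $f$ is reduced, i.e. $0f = 0$. Then in \textbf{[L.5]}, $\mathsf{L}[fg] = \mathsf{L}[f]\,\mathsf{L}[(1+0f)g] = \mathsf{L}[f]\,\mathsf{L}[(1+0)g] = \mathsf{L}[f]\,\mathsf{L}[1g] = \mathsf{L}[f]\,\mathsf{L}[g]$, where I used $0f = 0$ so $1+0f = 1+0 = 1$, and the left-additive identity $1+0 = 1$ in the hom-monoid. This is essentially immediate.

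For (\ref{linprop1.iii}), suppose $g$ is semi-additive, i.e. $(h+k)g = hg + kg$ for all $h,k$. Start from \textbf{[L.5]}: $\mathsf{L}[fg] = \mathsf{L}[f]\,\mathsf{L}[(1+0f)g]$. By semi-additivity of $g$, $(1+0f)g = 1g + (0f)g = g + 0(fg)$. I would then compute $\mathsf{L}[(1+0f)g] = \mathsf{L}[g + 0(fg)] = \mathsf{L}[g] + \mathsf{L}[0(fg)]$ by \textbf{[L.1]}; the term $\mathsf{L}[0(fg)]$ should vanish because $0(fg)$ is constant (it satisfies $0\cdot(0(fg)) = 0(fg)$), so part (\ref{linprop1.i}) gives $\mathsf{L}[0(fg)] = 0$. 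Hence $\mathsf{L}[(1+0f)g] = \mathsf{L}[g]$ and $\mathsf{L}[fg] = \mathsf{L}[f]\,\mathsf{L}[g]$. The main obstacle is getting part (\ref{linprop1.i}) right, since parts (\ref{linprop1.ii}) and (\ref{linprop1.iii}) are short once it is in hand — specifically, finding the correct way to exploit \textbf{[L.5]} and the behavior of $\mathsf{L}$ on maps factoring through $\top$ (equivalently, that $\mathsf{L}$ of a reduced-and-constant map, which must be a zero map post-composed, is zero) to conclude $\mathsf{L}[f] = 0$ for constant $f$; I expect the calculation displayed in the text after \textbf{[L.5]} (showing $\mathsf{L}[(1+0f)g] = \mathsf{L}[(1+0f)g]$ stabilizes, and in particular that $0\mathsf{L}[f]$ appears and is killed by \textbf{[L.2]}) contains the needed manipulation in disguise, and I would adapt it.
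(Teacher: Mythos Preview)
Your arguments for (\ref{linprop1.ii}) and (\ref{linprop1.iii}) are correct. Part (\ref{linprop1.ii}) matches the paper verbatim. For (\ref{linprop1.iii}) you invoke (\ref{linprop1.i}) to kill $\mathsf{L}[0(fg)]$, which is legitimate since $0(fg)$ is indeed constant; the paper instead re-expands $\mathsf{L}[0fg]$ via \textbf{[L.5]} directly (getting $\mathsf{L}[0]\,\mathsf{L}[(1+0)fg] = 0\,\mathsf{L}[fg] = 0$ by \textbf{[L.1]} and \textbf{[L.2]}), but your route is equivalent and arguably cleaner once (\ref{linprop1.i}) is in hand.

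The gap is in (\ref{linprop1.i}). Your attempt via $f = f\,1_B$ contains a type error: in the term $1_B + 0f$ that \textbf{[L.5]} produces, the zero is $0:B\to A$, so $0f:B\to B$ and cannot equal $f:A\to B$; constancy of $f$ tells you nothing about this particular $0f$. Your alternative idea of factoring $f$ through $\top$ can be made to work, but it requires the additional observation that any $\mathsf{L}$-output with domain $\top$ must be zero (since it is additive by \textbf{[L.2]} and $1_\top = 0_\top$), which you do not supply. The paper's argument is exactly the one you gesture at in your last sentence but never execute: use constancy to write $f = 0_{A,A}\,f$ and apply \textbf{[L.5]} to \emph{this} decomposition, obtaining
\[
\mathsf{L}[f] \;=\; \mathsf{L}[0\,f] \;=\; \mathsf{L}[0]\,\mathsf{L}[(1+0\cdot 0)f] \;=\; 0\,\mathsf{L}[f] \;=\; 0,
\]
using \textbf{[L.1]} and then \textbf{[L.2]}. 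The single missing idea is that the useful factoring of a constant $f$ is $0 \cdot f$, not $f \cdot 1$ or $1 \cdot f$.
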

\begin{proof} These are mostly straightforward calculations. 
\begin{enumerate}[{\em (i)}]
\item Suppose that $f$ is constant, that is, $0f=f$. Then we have that: 
\begin{align*}
\mathsf{L}[f] &=~ \mathsf{L}[0f] \tag{$f$ constant}\\
&=~ \mathsf{L}[0]~\mathsf{L}\left[(1+00)f\right]  \tag*{\textbf{[L.5]}}\\
&=~ 0 \mathsf{L}\left[f\right] \tag*{\textbf{[L.1]}} \\
&=~ 0 \tag*{\textbf{[L.2]}}
\end{align*}
\item Suppose that $f$ is reduced, that is, $0f=0$. Then we have that: 
\begin{align*}
\mathsf{L}[fg] &=~  \mathsf{L}[f]~\mathsf{L}\left[(1+0f)g\right] \tag*{\textbf{[L.5]}}\\ 
&=~ \mathsf{L}[f]~\mathsf{L}\left[(1+0)g\right] \tag{$f$ reduced} \\
&=~  \mathsf{L}[f]~\mathsf{L}[g]
\end{align*}
\item Suppose that $g$ is semi-additive, that is, $(f+k)g=fg+kg$. Then we have that: 
\begin{align*}
\mathsf{L}[fg] &=~  \mathsf{L}[f]~\mathsf{L}\left[(1+0f)g\right] \tag*{\textbf{[L.5]}}\\ 
&=~ \mathsf{L}[f]~\mathsf{L}\left[g + 0fg\right] \tag{$g$ semi-additive} \\
&=~  \mathsf{L}[f]~\left(\mathsf{L}[g] + \mathsf{L}\left[0fg\right] \right) \tag*{\textbf{[L.1]}} \\
&=~  \mathsf{L}[f]~\left(\mathsf{L}[g] + \mathsf{L}[0] \mathsf{L}\left[(1+0)fg\right]  \right)  \tag*{\textbf{[L.5]}}\\ 
&=~  \mathsf{L}[f]~\left(\mathsf{L}[g] + 0 \mathsf{L}\left[fg\right]  \right)  \tag*{\textbf{[L.1]}}\\ 
&=~  \mathsf{L}[f]~\left(\mathsf{L}[g] + 0 \right)  \tag*{\textbf{[L.2]}}\\ 
&=~ \mathsf{L}[f]~\mathsf{L}[g]
\end{align*}
\end{enumerate}
\end{proof} 

For a linearizing combinator, the analogues of linear maps are the maps for which the linearizing combinator does nothing, that is, $\mathsf{L}[f]=f$. 

\begin{definition} In a Cartesian left additive category with a linearizing combinator $\mathsf{L}$, a map $f$ is said to be \textbf{$\mathsf{L}$-linear} if $\mathsf{L}[f]=f$. 
\end{definition}

As we will see in Proposition \ref{DLprop}, in a Cartesian differential category the $\mathsf{L}$-linear maps are precisely the linear maps. As such, $\mathsf{L}$-linear maps satisfy many of the same basic properties as linear maps. 

\begin{lemma}\label{lstable-lem} In a Cartesian left additive category with a linearizing combinator $\mathsf{L}$, 
\begin{enumerate}[{\em (i)}]
\item \label{lstable-lem.linear} For every map $f$, $\mathsf{L}[f]$ is $\mathsf{L}$-linear; 
\item \label{lstable-lem.add} If $f$ is $\mathsf{L}$-linear then $f$ is additive;
\item \label{lstable-lem.pre} If $f$ is $\mathsf{L}$-linear then for every map $g$ which is post-composable with $f$, $\mathsf{L}[fg] = f \mathsf{L}[g]$;
\item \label{lstable-lem.post} If $g$ is $\mathsf{L}$-linear then for every map $f$ which is pre-composable with $g$, $\mathsf{L}[fg] = \mathsf{L}[f]g$. 
\item \label{lstable-lem.1} Identity maps are $\mathsf{L}$-linear;
\item \label{lstable-lem.0} Zero maps are $\mathsf{L}$-linear;
\item \label{lstable-lem.pi} Projection maps $\pi_0$ and $\pi_1$ are $\mathsf{L}$-linear;
\item \label{lstable-lem.comp} If $f$ and $g$ are $\mathsf{L}$-linear and composable, then their composition $fg$ is $\mathsf{L}$-linear;
\item \label{lstable-lem.pair} If $f$ and $g$ are $\mathsf{L}$-linear and pairable, then their pairing $\langle f, g \rangle$ is $\mathsf{L}$-linear;
\item \label{lstable-lem.prod} If $f$ and $g$ are $\mathsf{L}$-linear, then their product $f \times g$ is $\mathsf{L}$-linear;
\item \label{lstable-lem.sum} If $f$ and $g$ are $\mathsf{L}$-linear and summable, then their sum $f+g: A \to B$ is $\mathsf{L}$-linear;
\item \label{lstable-lem.retract} If $f$ is a retract and $\mathsf{L}$-linear, and if for a map $g$ which is post-composable with $f$ their composite ${fg}$ is $\mathsf{L}$-linear, then $g$ is $\mathsf{L}$-linear. 
\item \label{lstable-lem.iso} If $f$ is $\mathsf{L}$-linear and an isomorphism, then its inverse $f^{-1}$ is also $\mathsf{L}$-linear. 
\end{enumerate}
\end{lemma}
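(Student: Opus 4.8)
The plan is to prove the items essentially in the order stated, since each later item tends to rely on earlier ones, and to lean on the obvious analogy with Lemma \ref{linlem}: wherever the proof of a clause of Lemma \ref{linlem} used an axiom \textbf{[CD.$k$]}, here I would substitute the corresponding axiom \textbf{[L.$k$]}, together with Lemma \ref{linprop1}. For \emph{(i)}, $\mathsf{L}[\mathsf{L}[f]] = \mathsf{L}[f]$ is literally \textbf{[L.6]}, so $\mathsf{L}[f]$ is $\mathsf{L}$-linear. For \emph{(ii)}, if $\mathsf{L}[f]=f$ then $f$ is additive by \textbf{[L.2]} applied to $f$ (noting $\mathsf{L}[f]=f$). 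For \emph{(v)}, \emph{(vi)}, \emph{(vii)}: \textbf{[L.3]} gives $\mathsf{L}[1]=1$, $\mathsf{L}[\pi_0]=\pi_0$, $\mathsf{L}[\pi_1]=\pi_1$, and \textbf{[L.1]} gives $\mathsf{L}[0]=0$, so identities, zeros, and projections are $\mathsf{L}$-linear.

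For \emph{(iii)} and \emph{(iv)}, which I expect to do the real work, suppose $f$ is $\mathsf{L}$-linear. First observe $\mathsf{L}$-linearity implies additive (by \emph{(ii)}), hence in particular reduced, so $0f = 0$. Then by Lemma \ref{linprop1}.(\ref{linprop1.ii}), $\mathsf{L}[fg] = \mathsf{L}[f]\,\mathsf{L}[g] = f\,\mathsf{L}[g]$, which is \emph{(iii)}. For \emph{(iv)}, suppose $g$ is $\mathsf{L}$-linear; since $g$ is additive it is semi-additive, so Lemma \ref{linprop1}.(\ref{linprop1.iii}) gives $\mathsf{L}[fg] = \mathsf{L}[f]\,\mathsf{L}[g] = \mathsf{L}[f]\,g$. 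With \emph{(iii)} and \emph{(iv)} in hand, \emph{(viii)} is immediate: if $f$ and $g$ are $\mathsf{L}$-linear then $\mathsf{L}[fg] = f\,\mathsf{L}[g] = fg$ by \emph{(iii)} then $\mathsf{L}$-linearity of $g$ (or symmetrically via \emph{(iv)}).

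For \emph{(ix)}, if $f,g$ are $\mathsf{L}$-linear then by \textbf{[L.4]}, $\mathsf{L}[\langle f,g\rangle] = \langle \mathsf{L}[f],\mathsf{L}[g]\rangle = \langle f,g\rangle$. Then \emph{(x)} follows because $f\times g = \langle \pi_0 f, \pi_1 g\rangle$ and $\pi_0 f$, $\pi_1 g$ are $\mathsf{L}$-linear by \emph{(vii)}, \emph{(viii)}, \emph{(ix)}. For \emph{(xi)}, $\mathsf{L}[f+g] = \mathsf{L}[f]+\mathsf{L}[g] = f+g$ by \textbf{[L.1]}. For \emph{(xii)}: $f$ a retract means there is $r$ with $rf = 1$; given $fg$ is $\mathsf{L}$-linear, $g = 1\cdot g$ factors as $r f g$, and since $r$ is any map and $fg$ is $\mathsf{L}$-linear, \emph{(iv)} gives $\mathsf{L}[g] = \mathsf{L}[r(fg)] = \mathsf{L}[r]\,(fg)$; using also that $f$ is $\mathsf{L}$-linear so $\mathsf{L}[r]\,f = \mathsf{L}[rf] = \mathsf{L}[1] = 1$ via \emph{(iv)} and \textbf{[L.3]}, we get $\mathsf{L}[g] = \mathsf{L}[r]\,f\,g = g$. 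Finally \emph{(xiii)} is the special case of \emph{(xii)} with $r = f^{-1}$, noting $f^{-1}f = 1$ makes $f$ a retract and $f\,f^{-1} = 1$ is $\mathsf{L}$-linear. The main obstacle is really just item \emph{(xii)}: one must be careful that the factorization $g = r(fg)$ together with \textbf{[L.3]} and clause \emph{(iv)} genuinely closes the loop, exactly as in the proof of Lemma \ref{linlem}.(\ref{linlem.retract}); everything else is a direct transcription of the differential-combinator argument with \textbf{[CD.$k$]} replaced by \textbf{[L.$k$]}.
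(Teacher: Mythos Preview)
Your proof is correct and follows essentially the same approach as the paper: each clause is derived from the corresponding linearizing-combinator axiom or from Lemma~\ref{linprop1}, exactly as the paper does. The only minor deviation is in item~(\ref{lstable-lem.retract}), where the paper argues directly via (\ref{lstable-lem.pre}) as $\mathsf{L}[g] = f^\circ f\,\mathsf{L}[g] = f^\circ\,\mathsf{L}[fg] = f^\circ f g = g$, whereas you route through two applications of (\ref{lstable-lem.post}); both are valid and equally short.
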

\begin{proof} Most of these follow directly from the axioms of a linearizing combinator. (\ref{lstable-lem.linear}) follows from \textbf{[L.6]}, (\ref{lstable-lem.add}) follows from \textbf{[L.2]}, (\ref{lstable-lem.1}) and (\ref{lstable-lem.pi}) follow from \textbf{[L.3]}, (\ref{lstable-lem.0}) and (\ref{lstable-lem.sum}) follow from \textbf{[L.1]}, (\ref{lstable-lem.pair}) follows from \textbf{[L.4]}. For the rest, we mostly use \textbf{[L.5]} and Lemma \ref{linprop1}. \\ \\
\noindent (\ref{lstable-lem.pre}): Suppose that $f$ is $\mathsf{L}$-linear. By (\ref{lstable-lem.add}), $f$ is additive and therefore reduced. Then: 
\begin{align*} \mathsf{L}[fg] &=~\mathsf{L}[f]~\mathsf{L}[g] \tag{Lemma \ref{linprop1}.(\ref{linprop1.ii})} \\
&=~ f \mathsf{L}[g] \tag{$f$ is $\mathsf{L}$-linear}
\end{align*}
\noindent (\ref{lstable-lem.post}): Suppose that $g$ is $\mathsf{L}$-linear. By (\ref{lstable-lem.add}), $g$ is additive and therefore semi-additive. Then: 
\begin{align*} \mathsf{L}[fg] &=~\mathsf{L}[f]~\mathsf{L}[g] \tag{Lemma \ref{linprop1}.(\ref{linprop1.iii})} \\
&=~ \mathsf{L}[f] g \tag{$g$ is $\mathsf{L}$-linear}
\end{align*}
\noindent (\ref{lstable-lem.comp}): Suppose that $f$ and $g$ are $\mathsf{L}$-linear and composable. Then we have that:
\begin{align*} \mathsf{L}[fg] &=~f \mathsf{L}[g] \tag{$f$ is $\mathsf{L}$-linear + (\ref{lstable-lem.pre})} \\
&=~ f g \tag{$g$ is $\mathsf{L}$-linear}
\end{align*}
So $fg$ is $\mathsf{L}$-linear. \\ \\
\noindent (\ref{lstable-lem.prod}):  Suppose that $f$ and $g$ are $\mathsf{L}$-linear. By (\ref{lstable-lem.pi}) and (\ref{lstable-lem.comp}), $\pi_0f$ and $\pi_1 g$ are also $\mathsf{L}$-linear. Then by (\ref{lstable-lem.pair}), the pairing of $\pi_0f$ and $\pi_1 g$ is also $\mathsf{L}$-linear, that is, $f \times g = \langle \pi_0 f, \pi_1 g \rangle$ is $\mathsf{L}$-linear. \\ \\
\noindent (\ref{lstable-lem.retract}): Suppose that $f$ is a retract (with section $f^\circ$) and $\mathsf{L}$-linear, and that ${fg}$ is $\mathsf{L}$-linear. Then: 
\begin{align*} \mathsf{L}[g] &=~ f^\circ f\mathsf{L}[g] \tag{$f$ is a retract of $f^\circ$} \\
&=~ f^\circ~ \mathsf{L}[f g]  \tag{$f$ is $\mathsf{L}$-linear + (\ref{lstable-lem.pre})} \\
&=~f^\circ  fg \tag{$fg$ is  $\mathsf{L}$-linear} \\
&=~ g  \tag{$f$ is a retract of $f^\circ$}
\end{align*}
So $g$ is $\mathsf{L}$-linear. \\ \\
(\ref{lstable-lem.iso}): Suppose that $f$ is $\mathsf{L}$-linear and an isomorphism. By (\ref{lstable-lem.comp}), the composite $f f^{-1}=1$ is $\mathsf{L}$-linear. Since $f$ is a retract, by (\ref{lstable-lem.retract}) we have that $f^{-1}$ is also $\mathsf{L}$-linear. 
\end{proof}

 
 Once again, it is important to note that while every $\mathsf{L}$-linear map is additive, not every additive map is necessarily $\mathsf{L}$-linear. That said, since every $\mathsf{L}$-linear map is additive, the subcategory of $\mathsf{L}$-linear maps form a category with finite biproducts. 

\begin{lemma} For a Cartesian left additive category $\mathbb{X}$ with a linearizing combinator $\mathsf{L}$, define $\mathbb{X}^{\mathsf{L}}$ as the subcategory of $\mathsf{L}$-linear maps of $\mathbb{X}$, that is, whose objects are the same as $\mathbb{X}$ and whose maps are the $\mathsf{L}$-linear maps between them. Then $\mathbb{X}^{\mathsf{L}}$ is a category with finite biproducts. Furthermore, for every map $f$ in $\mathbb{X}$, $\mathsf{L}[f]$ is a map in $\mathbb{X}^{\mathsf{L}}$. \end{lemma}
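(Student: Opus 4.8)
The claim has two parts: first, that $\mathbb{X}^{\mathsf{L}}$ is a (well-defined sub)category with finite biproducts; second, that $\mathsf{L}[f]$ always lands in $\mathbb{X}^{\mathsf{L}}$. The second part is immediate: it is exactly Lemma~\ref{lstable-lem}.(\ref{lstable-lem.linear}), which says $\mathsf{L}[\mathsf{L}[f]] = \mathsf{L}[f]$, i.e.\ $\mathsf{L}[f]$ is $\mathsf{L}$-linear. So the work is entirely in the first part, and essentially all of the needed facts are already recorded in Lemma~\ref{lstable-lem}.

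First I would check that $\mathbb{X}^{\mathsf{L}}$ is a category: identities are $\mathsf{L}$-linear by Lemma~\ref{lstable-lem}.(\ref{lstable-lem.1}), and composites of $\mathsf{L}$-linear maps are $\mathsf{L}$-linear by Lemma~\ref{lstable-lem}.(\ref{lstable-lem.comp}), so $\mathbb{X}^{\mathsf{L}}$ is closed under the categorical structure inherited from $\mathbb{X}$. Next, since every $\mathsf{L}$-linear map is additive (Lemma~\ref{lstable-lem}.(\ref{lstable-lem.add})), each hom-set $\mathbb{X}^{\mathsf{L}}(A,B)$ is a commutative monoid under the addition inherited from $\mathbb{X}$ --- it is closed under sums and contains $0$ by Lemma~\ref{lstable-lem}.(\ref{lstable-lem.sum}) and (\ref{lstable-lem.0}) --- and both pre- and post-composition are additive on these hom-sets (pre-composition because $\mathbb{X}$ is left additive, post-composition because $\mathsf{L}$-linear maps are in particular semi-additive). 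Hence $\mathbb{X}^{\mathsf{L}}$ is enriched over commutative monoids, i.e.\ it is an additive category in the unbiased sense (a ``semiadditive category'' before checking biproducts).

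For the biproducts: the terminal object $\top$ of $\mathbb{X}$ is a zero object of $\mathbb{X}^{\mathsf{L}}$ (the unique map to $\top$ is the zero map, which is $\mathsf{L}$-linear, and dually $0: \top \to A$ is $\mathsf{L}$-linear). For the binary biproduct I would take $A \times B$ with the usual projections $\pi_0, \pi_1$ and injections $\langle 1, 0\rangle, \langle 0, 1\rangle$. The projections are $\mathsf{L}$-linear by Lemma~\ref{lstable-lem}.(\ref{lstable-lem.pi}); the injections are pairings of $\mathsf{L}$-linear maps (identities and zeros) and so are $\mathsf{L}$-linear by Lemma~\ref{lstable-lem}.(\ref{lstable-lem.pair}); and given $\mathsf{L}$-linear maps $f: C \to A$, $g: C \to B$ the pairing $\langle f, g\rangle$ is $\mathsf{L}$-linear by Lemma~\ref{lstable-lem}.(\ref{lstable-lem.pair}) again, exhibiting $A\times B$ as a product in $\mathbb{X}^{\mathsf{L}}$. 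The product-coproduct compatibility identities ($\pi_0\langle 1,0\rangle = 1$, $\langle 1,0\rangle\pi_0 + \langle 0,1\rangle\pi_1 = 1$, etc.) all hold already in $\mathbb{X}$ and only involve maps we have shown to be $\mathsf{L}$-linear, so they transport to $\mathbb{X}^{\mathsf{L}}$; a semiadditive category with binary products in which these identities hold has binary biproducts, and combined with the zero object we get finite biproducts.

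The only mild subtlety --- and the place I would be most careful --- is not a deep obstacle but a bookkeeping point: one must verify that the product in $\mathbb{X}^{\mathsf{L}}$ really is computed by the same object and structure maps as in $\mathbb{X}$ (so that the product structure is literally inherited, not merely abstractly present), which is why Lemma~\ref{lstable-lem}.(\ref{lstable-lem.pair}) is doing the essential work; everything else is routine transport of identities that already hold in $\mathbb{X}$. I would therefore state the proof as: ``$\mathbb{X}^{\mathsf{L}}$ is closed under identities, composition, sums and zero maps by Lemma~\ref{lstable-lem}, hence is a commutative-monoid-enriched subcategory; $\top$ is a zero object and $A\times B$ with its usual structure maps is a biproduct by Lemma~\ref{lstable-lem}.(\ref{lstable-lem.pi}) and (\ref{lstable-lem.pair}); finally $\mathsf{L}[f]$ is $\mathsf{L}$-linear by Lemma~\ref{lstable-lem}.(\ref{lstable-lem.linear}).''
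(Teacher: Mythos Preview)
Your proposal is correct and follows essentially the same approach as the paper: both arguments cite Lemma~\ref{lstable-lem} for closure under identities, composition, sums, zeros, projections, and pairings, and then conclude biproducts. The only cosmetic difference is that the paper packages the final step as ``$\mathbb{X}^{\mathsf{L}}$ is a Cartesian left additive category in which every map is additive, hence has finite biproducts,'' whereas you verify the biproduct identities directly; these are the same argument in different clothing.
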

\begin{proof} That composition and identity maps in $\mathbb{X}^{\mathsf{L}}$ are well-defined follows from Lemma \ref{lstable-lem}.(\ref{lstable-lem.1}) and (\ref{lstable-lem.comp}). That $\mathbb{X}^{\mathsf{L}}$ has finite products follows from Lemma \ref{lstable-lem}.(\ref{lstable-lem.pi}) and (\ref{lstable-lem.pair}). That $\mathbb{X}^{\mathsf{L}}$ is a Cartesian left additive category follows from Lemma \ref{lstable-lem}.(\ref{lstable-lem.0}) and (\ref{lstable-lem.sum}). Note that a Cartesian left additive category where every map is additive is precisely a category with finite biproducts. By Lemma \ref{lstable-lem}.(\ref{lstable-lem.add}), it follows that every map in $\mathbb{X}^{\mathsf{L}}$ is additive. So $\mathbb{X}^{\mathsf{L}}$ is a category with finite biproducts. Lastly, by Lemma \ref{lstable-lem}.(\ref{lstable-lem.linear}), for every map $f$ in $\mathbb{X}$, $\mathsf{L}[f]$ is a map in $\mathbb{X}^{\mathsf{L}}$.
\end{proof}

Note that in general, the linearizing combinator does not induce a functor from $\mathbb{X}$ to $\mathbb{X}^{\mathsf{L}}$. However by Lemma \ref{linprop1}.(\ref{linprop1.ii}) and (\ref{linprop1.iii}), the linearizing combinator does induce a functor from the subcategories of reduced maps, semi-additive maps, and additive maps to $\mathbb{X}^{\mathsf{L}}$.

We now show that every Cartesian differential category comes equipped with a canonical linearizing combinator. Consider again the example of a smooth function $f: \mathbb{R} \to \mathbb{R}$, where $\mathsf{L}[f](x) = f^\prime(0)x$. Recall that $\mathsf{D}[f](x,y) = f^\prime(x) y$. Therefore, $\mathsf{L}[f](x) = \mathsf{D}[f](0,x)$. This construction generalizes to arbitrary Cartesian differential categories. Furthermore, maps which are linear in the Cartesian differential category sense, that is $\mathsf{D}$-linear, are precisely those for which $\mathsf{L}[f] = f$, that is those which are $\mathsf{L}$-linear . 

\begin{proposition}\label{DLprop} Every Cartesian differential category, with differential combinator $\mathsf{D}$, admits a linearizing combinator $\mathsf{L}_\mathsf{D}$ defined as follows for every map $f: A \to B$: 
\begin{equation}\label{LDdef}\begin{gathered} \mathsf{L}_\mathsf{D}[f] := \langle 0, 1 \rangle \mathsf{D}[f] \end{gathered}\end{equation}
Furthermore,
\begin{enumerate}[{\em (i)}]
\item \label{DLprop.1} For every map $f$, $\mathsf{L}_\mathsf{D}[f]$ is $\mathsf{D}$-linear;
\item \label{DLprop.2} A map $f$ is $\mathsf{D}$-linear if and only if $f$ is $\mathsf{L}_\mathsf{D}$-linear. 
\end{enumerate}
\end{proposition}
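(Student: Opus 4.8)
The plan is to verify the six axioms \textbf{[L.1]}--\textbf{[L.6]} for the operator $\mathsf{L}_\mathsf{D}[f] = \langle 0,1\rangle \mathsf{D}[f]$, together with (i) and (ii). I would handle (i) and (ii) first, since they are essentially restatements of Lemma \ref{linlemimportant}: part (i) is exactly Lemma \ref{linlemimportant}.(\ref{linlemimportant1}), and for (ii) note that ``$f$ is $\mathsf{L}_\mathsf{D}$-linear'' unfolds to $f = \mathsf{L}_\mathsf{D}[f] = \langle 0,1\rangle\mathsf{D}[f]$, which by Lemma \ref{linlemimportant}.(\ref{linlemimportant2}) is equivalent to $f$ being $\mathsf{D}$-linear. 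Establishing these first pays off, since two of the linearizing combinator axioms become immediate: \textbf{[L.2]} follows from (i) plus Lemma \ref{linlem}.(\ref{linlem.add}) (linear maps are additive), and \textbf{[L.6]} follows by combining (i) and (ii), as $\mathsf{L}_\mathsf{D}[f]$ is $\mathsf{D}$-linear hence $\mathsf{L}_\mathsf{D}$-linear, i.e. $\mathsf{L}_\mathsf{D}[\mathsf{L}_\mathsf{D}[f]] = \mathsf{L}_\mathsf{D}[f]$.

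Next I would dispatch \textbf{[L.1]}, \textbf{[L.3]}, and \textbf{[L.4]}, each of which is a one-line computation obtained by precomposing the corresponding differential combinator axiom with $\langle 0,1\rangle$: \textbf{[L.1]} uses \textbf{[CD.1]} and the fact that precomposition in a left additive category preserves sums and zeros; \textbf{[L.3]} uses \textbf{[CD.3]} together with $\langle 0,1\rangle\pi_1 = 1$ and $\langle 0,1\rangle\pi_0 = 0$; and \textbf{[L.4]} uses \textbf{[CD.4]} and the distributivity of $\langle 0,1\rangle$ over pairings.

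The only axiom requiring genuine work is the chain rule \textbf{[L.5]}, $\mathsf{L}_\mathsf{D}[fg] = \mathsf{L}_\mathsf{D}[f]~\mathsf{L}_\mathsf{D}[(1+0f)g]$. I would expand the left-hand side using \textbf{[CD.5]} and distribute $\langle 0,1\rangle$ through the resulting pairing; since $\langle 0,1\rangle\pi_0 = 0$, this yields $\mathsf{L}_\mathsf{D}[fg] = \langle 0f, \mathsf{L}_\mathsf{D}[f]\rangle\mathsf{D}[g]$, where ``$0f$'' denotes the evident constant map. For the right-hand side the key sublemma is that $\mathsf{D}$ kills constant maps: writing $0f$ as a zero map (linear, by Lemma \ref{linlem}.(\ref{linlem.0})) postcomposed with $f$ and applying Lemma \ref{linlem}.(\ref{linlem.pre}) gives $\mathsf{D}[0f] = (0\times 0)\mathsf{D}[f]$, which collapses to $0$ using \textbf{[CD.2]}; hence $\mathsf{D}[1+0f] = \pi_1$ by \textbf{[CD.1]} and \textbf{[CD.3]}. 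Feeding this back into \textbf{[CD.5]} for the composite $(1+0f)g$ and simplifying with $\langle 0,1\rangle\pi_0 = 0$, $\langle 0,1\rangle\pi_1 = 1$, and $h0 = 0$ produces $\mathsf{L}_\mathsf{D}[(1+0f)g] = \langle 0f, 1\rangle\mathsf{D}[g]$; precomposing by $\mathsf{L}_\mathsf{D}[f]$ and using $h0 = 0$ once more on the first component recovers $\langle 0f, \mathsf{L}_\mathsf{D}[f]\rangle\mathsf{D}[g]$, matching the left-hand side. The main obstacle is bookkeeping rather than ideas: the symbol ``$0$'' denotes several different zero maps (over $A$, over $B$, over $B\times B$) and correspondingly ``$0f$'' several different constant maps, so one must stay disciplined about which object each instance lives over while repeatedly invoking the left-additive identities $h(g+k) = hg + hk$ and $h0 = 0$.
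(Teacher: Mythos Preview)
Your proposal is correct and follows essentially the same approach as the paper: both establish (i) and (ii) first via Lemma~\ref{linlemimportant}, use them to get \textbf{[L.2]} and \textbf{[L.6]} for free, and handle \textbf{[L.1]}, \textbf{[L.3]}, \textbf{[L.4]} by precomposing the corresponding \textbf{[CD.$n$]} with $\langle 0,1\rangle$. For \textbf{[L.5]} the paper runs a single chain from the right-hand side to the left (computing $\mathsf{D}[0f]$ via \textbf{[CD.5]} rather than Lemma~\ref{linlem}.(\ref{linlem.pre})), whereas you reduce both sides to the common expression $\langle 0f, \mathsf{L}_\mathsf{D}[f]\rangle\mathsf{D}[g]$; the content is the same.
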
 

\begin{proof} First note that (\ref{DLprop.1}) and (\ref{DLprop.2}) are precisely a reformulation of Lemma \ref{linlemimportant} in terms of $\mathsf{L}_\mathsf{D}$. Using this, we will now prove that $\mathsf{L}_\mathsf{D}$ satisfies \textbf{[L.1]} to \textbf{[L.6]}. Each of the linearizing combinator axioms will follow mostly from the differential combinator axiom of the same number. \\ \\
\noindent \textbf{[L.1]}: $\mathsf{L}_\mathsf{D}[f+g]=\mathsf{L}_\mathsf{D}[f]+\mathsf{L}_\mathsf{D}[g]$ and $\mathsf{L}_\mathsf{D}[0]=0$
\begin{align*}
\mathsf{L}_\mathsf{D}[f+g] &=~\langle 0, 1 \rangle \mathsf{D}[f+g] \\
&=~\langle 0, 1 \rangle\left(\mathsf{D}[f] + \mathsf{D}[g] \right)\tag*{\textbf{[CD.1]}} \\ 
&=~\langle 0, 1 \rangle \mathsf{D}[f] + \langle 0, 1 \rangle \mathsf{D}[g] \\
&=~  \mathsf{L}_\mathsf{D}[f]+\mathsf{L}_\mathsf{D}[g]
\end{align*}
\begin{align*}
\mathsf{L}_\mathsf{D}[0] &=~ \langle 0, 1 \rangle \mathsf{D}[0] \\
&=~\langle 0, 1 \rangle 0 \tag*{\textbf{[CD.1]}} \\ 
&=~ 0 
\end{align*}
\noindent \textbf{[L.2]}: $\oplus_A \mathsf{L}_\mathsf{D}[f] = \pi_0 \mathsf{L}_\mathsf{D}[f]+ \pi_1 \mathsf{L}_\mathsf{D}[f]$ and $0 \mathsf{L}_\mathsf{D}[f]=0$: \\ \\
\noindent By Proposition \ref{DLprop}.(\ref{DLprop.1}), $\mathsf{L}_\mathsf{D}[f]$ is $\mathsf{D}$-linear and therefore by Lemma \ref{linlem}.(\ref{linlem.add}), $\mathsf{L}_\mathsf{D}[f]$ is also additive, i.e., $\oplus_A \mathsf{L}_\mathsf{D}[f] = \pi_0 \mathsf{L}_\mathsf{D}[f]+ \pi_1 \mathsf{L}_\mathsf{D}[f]$ and $0 \mathsf{L}_\mathsf{D}[f]=0$. \\\\
\noindent \textbf{[L.3]}: $\mathsf{L}_\mathsf{D}[1]=1$ and $\mathsf{L}_\mathsf{D}[\pi_i]=\pi_i$: \\ \\
\noindent By Lemma \ref{linlem}.(\ref{linlem.1}) and (\ref{linlem.pi}), identity maps and projection maps are $\mathsf{D}$-linear. Therefore by Proposition \ref{DLprop}.(\ref{DLprop.2}), identity maps and projection maps are also $\mathsf{L}_\mathsf{D}$-linear, i.e., $\mathsf{L}_\mathsf{D}[1]=1$ and $\mathsf{L}_\mathsf{D}[\pi_i]=\pi_i$. \\\\
\noindent \textbf{[L.4]}: $\mathsf{L}_\mathsf{D}\!\left[ \langle f, g \rangle \right] = \left \langle \mathsf{L}_\mathsf{D}[f], \mathsf{L}_\mathsf{D}[g] \right \rangle$
\begin{align*}
\mathsf{L}_\mathsf{D}\!\left[ \langle f, g \rangle \right] &=~\langle 0, 1 \rangle \mathsf{D}\!\left[ \langle f, g \rangle \right] \\
&=~\langle 0, 1 \rangle \left \langle  \mathsf{D}[f] , \mathsf{D}[g] \right \rangle \tag*{\textbf{[CD.4]}} \\ 
&=~ \left \langle \langle 0, 1 \rangle \mathsf{D}[f] , \langle 0, 1 \rangle \mathsf{D}[g] \right \rangle \\
&=~\left \langle \mathsf{L}_\mathsf{D}[f], \mathsf{L}_\mathsf{D}[g] \right \rangle
\end{align*}
\noindent \textbf{[L.5]}: $\mathsf{L}_\mathsf{D}[fg] = \mathsf{L}_\mathsf{D}[f]~\mathsf{L}_\mathsf{D}\!\left[(1+0f)g\right]$
\begin{align*}
\mathsf{L}_\mathsf{D}[f]~\mathsf{L}_\mathsf{D}\!\left[(1+0f)g\right] &=~ \mathsf{L}_\mathsf{D}[f] \langle 0, 1 \rangle \mathsf{D}\!\left[ (1+0f)g \right] \\
&=~  \mathsf{L}_\mathsf{D}[f] \langle 0, 1 \rangle \left \langle \pi_0 (1+0f), \mathsf{D}[1+0f] \right \rangle  \mathsf{D}\!\left[ g \right] \tag*{\textbf{[CD.5]}}  \\
&=~  \mathsf{L}_\mathsf{D}[f] \langle 0, 1 \rangle \left \langle \pi_0 + \pi_0 0 f, \mathsf{D}[1] + \mathsf{D}[0f] \right \rangle  \mathsf{D}\!\left[ g \right] \tag*{\textbf{[CD.1]}}  \\
&=~ \mathsf{L}_\mathsf{D}[f] \langle 0, 1 \rangle \left \langle \pi_0 + 0 f, \pi_1 + \langle \pi_0 0, \mathsf{D}[0] \rangle \mathsf{D}[f] \right \rangle  \mathsf{D}\!\left[ g \right] \tag*{\textbf{[CD.3]} + \textbf{[CD.5]}}  \\
&=~  \mathsf{L}_\mathsf{D}[f] \langle 0, 1 \rangle \left \langle \pi_0 + 0 f, \pi_1 + \langle 0, 0 \rangle \mathsf{D}[f] \right \rangle  \mathsf{D}\!\left[ g \right] \tag*{\textbf{[CD.1]}}  \\
&=~  \mathsf{L}_\mathsf{D}[f] \langle 0, 1 \rangle \left \langle \pi_0 + 0 f, \pi_1 + 0 \right \rangle  \mathsf{D}\!\left[ g \right] \tag*{\textbf{[CD.2]}}  \\
&=~  \mathsf{L}_\mathsf{D}[f]  \left \langle \langle 0, 1 \rangle (\pi_0 + 0 f), \langle 0, 1 \rangle \pi_1 \right \rangle  \mathsf{D}\!\left[ g \right]   \\
&=~  \mathsf{L}_\mathsf{D}[f]  \left \langle \langle 0, 1 \rangle \pi_0 + \langle 0, 1 \rangle 0 f, \langle 0, 1 \rangle \pi_1 \right \rangle  \mathsf{D}\!\left[ g \right]   \\
&=~  \mathsf{L}_\mathsf{D}[f]  \left \langle  0 +  0 f, 1 \right \rangle  \mathsf{D}\!\left[ g \right]   \\
&=~  \mathsf{L}_\mathsf{D}[f]  \left \langle   0 f, 1 \right \rangle  \mathsf{D}\!\left[ g \right]   \\
&=~    \left \langle  \mathsf{L}_\mathsf{D}[f] 0 f, \mathsf{L}_\mathsf{D}[f] \right \rangle  \mathsf{D}\!\left[ g \right]   \\
&=~    \left \langle  0 f, \langle 0, 1 \rangle \mathsf{D}[f] \right \rangle  \mathsf{D}\!\left[ g \right]   \\
&=~    \left \langle  \langle 0,1 \rangle \pi_0 f, \langle 0, 1 \rangle \mathsf{D}[f] \right \rangle  \mathsf{D}\!\left[ g \right]   \\
&=~    \langle 0,1 \rangle \left \langle   \pi_0 f, \mathsf{D}[f] \right \rangle  \mathsf{D}\!\left[ g \right]   \\
&=~ \langle 0,1 \rangle \mathsf{D}[fg] \tag*{\textbf{[CD.5]}}  \\
&=~\mathsf{L}_\mathsf{D}[fg]
\end{align*}
\noindent \textbf{[L.6]}: $\mathsf{L}_\mathsf{D}\!\left[\mathsf{L}_\mathsf{D}[f]\right] = \mathsf{L}_\mathsf{D}[f]$: \\ \\
\noindent By Proposition \ref{DLprop}.(\ref{DLprop.1}), $\mathsf{L}_\mathsf{D}[f]$ is $\mathsf{D}$-linear. Therefore by Proposition \ref{DLprop}.(\ref{DLprop.2}), it follows that $\mathsf{L}_\mathsf{D}[f]$ is also $\mathsf{L}_\mathsf{D}$-linear which means that $\mathsf{L}_\mathsf{D}\!\left[\mathsf{L}_\mathsf{D}[f]\right] = \mathsf{L}_\mathsf{D}[f]$. \\\\
\noindent So we conclude that $\mathsf{L}_\mathsf{D}$ is a linearizing combinator. 
\end{proof} 

We conclude this section by providing examples of linearizing combinators by applying Proposition \ref{DLprop} to the examples of Cartesian differential categories from Section \ref{CDCsec}.

 \begin{example} \normalfont For a category with finite biproducts, the linearizing combinator is simply the identity combinator: 
 \[ \mathsf{L}_\mathsf{D}[f] = \langle 0,1 \rangle  \mathsf{D}[f] = \langle 0,1 \rangle \pi_1 f = f \]
 This make sense since every map, in this example, is already $\mathsf{D}$-linear by definition 
\end{example}

\begin{example} \normalfont For $\mathsf{SMOOTH}$, the linearizing combinator is defined as evaluating the directional derivative at zero in the first argument. Explicitly, for a smooth function $F: \mathbb{R}^n \to \mathbb{R}^m$, which recall is a tuple of smooth functions $F = \langle f_1, \hdots, f_m \rangle$: 
\[ \mathsf{L}[F](\vec x) = \nabla(F)(\vec 0) \cdot \vec x = \left \langle  \sum \limits^n_{i=1} \frac{\partial f_1}{\partial x_i}(\vec 0) x_i, \hdots,  \sum \limits^n_{i=1} \frac{\partial f_m}{\partial x_i}(\vec 0) x_i \right \rangle \]
For example, consider the polynomial function $f(x,y,z) = x^2y + 3x + z + 1$. Then $\mathsf{L}[f]$ picks out all monomials of degree 1 in $f$, so $\mathsf{L}[f](x,y,z) = 3x + z$. As another example, consider $g(x,y) = e^x \cos(y)$. Its derivative is worked out to be $\mathsf{D}[g](x,y, z,w) = e^x\cos(y)z - e^x \sin(y)w$. Then evaluating at $0$ in the first two arguments, we obtain that $\mathsf{L}[g](x,y) = e^0\cos(0)x - e^0 \sin(0)y = x$. 
\end{example}

\begin{example} \normalfont For $\mathsf{HoAbCat}_\mathsf{Ch}$, the linearizing combinator is precisely the linearization operator $\mathsf{D}_1$ as defined in \cite[Definition 5.1]{bauer2018directional}, which in turn is defined using cross effects \cite[Definition 2.1]{bauer2018directional}. 
\end{example}

\begin{example} \normalfont For a Cartesian left additive category $\mathbb{X}$, the linearizing combinator for its cofree Cartesian differential category $\mathcal{D}(\mathbb{X})$ is for a $\mathsf{D}$-sequence $(f_0, f_1, f_2, \hdots)$: 
\[ \mathsf{L}\left[ (f_0, f_1, f_2, \hdots ) \right] = ( \langle 0,1 \rangle f_1, \mathsf{P}(\langle 0,1 \rangle) f_2,  \mathsf{P}^2(\langle 0,1 \rangle) f_3, \hdots )  \]
where recall that $\mathsf{P}$ is the product functor $\mathsf{P}(-) = - \times - $. However by \cite[Lemma 4.26]{lemay2018tangent}, or by the axioms of a $\mathsf{D}$-sequence \cite[Definition 4.2]{lemay2018tangent}, it follows that $\mathsf{L}\left[ (f_0, f_1, f_2, \hdots ) \right]$ can be simplified to: 
\[ \mathsf{L}\left[ (f_0, f_1, f_2, \hdots ) \right] = ( \langle 0,1 \rangle f_1, \pi_1  \langle 0,1 \rangle f_1, \pi_1 \pi_1  \langle 0,1 \rangle f_1, \hdots ) =  i_\bullet \cdot ( \langle 0,1 \rangle f_1)  \]
where the notation $i_\bullet \cdot -$ was introduced in \cite{lemay2018tangent}. 
\end{example}

\begin{example} \normalfont For a differential category $\mathbb{X}$ with finite products, the linearizing combinator for the coKleisli category $\mathbb{X}_\oc$ is for a coKleisli map $f: \oc A \to B$: 
\[ \mathsf{L}[f] := \xymatrixcolsep{4pc}\xymatrix{\oc A \ar[r]^-{\Delta_A} & \oc A \otimes \oc A \ar[r]^-{\oc(0) \otimes \varepsilon_A} & \oc A \otimes A \ar[r]^-{\mathsf{d}_A} & \oc A \ar[r]^-{f} & B 
 } \] 
\end{example}

\begin{example}\label{ex:diffstorlin} \normalfont For a differential storage category $\mathbb{X}$, the linearizing combinator for the coKleisli category $\mathbb{X}_\oc$ can alternatively be expressed using the codereliction map: 
\[ \mathsf{L}[f] := \xymatrixcolsep{4pc}\xymatrix{\oc A \ar[r]^-{\varepsilon_A} & A \ar[r]^-{\eta_A} & \oc A \ar[r]^-{f} & B 
 } \]
In differential linear logic, $\eta$ is interpreted as producing the derivative evaluated at zero since $\eta = (u \otimes 1) \mathsf{d}$. CoKleisli maps $\oc A \to B$ are thought of as smooth maps, while maps in the base category $A \to B$ are thought of as linear. For every smooth map $f: \oc A \to B$, we obtain a linear map $\eta_A f: A \to B$, which we precompose by $\varepsilon$ to reobtain a smooth map $\varepsilon_A \eta_A f: \oc A \to B$. 
\end{example}

\begin{example} \normalfont For $\mathsf{CON}$, the linearizing combinator is defined as follows on a smooth function $f$:
\[ \mathsf{L}[f](x) := \lim \limits_{t \to 0} \frac{f(t \cdot x) - f(0)}{t} \]
\end{example}

\section{Differentiation in Context}\label{ContextSec}

We would like to prove the converse of Proposition \ref{DLprop}, that is, from a linearizing combinator we would like to construct a differential combinator following the same construction as in \cite{bauer2018directional}. However, to do so requires the ability to partially linearize maps, that is, to linearize on certain variables while keeping others constant.  This, equivalently, means we would like to be able to linearize with respect to a fixed ``context''.  From a categorical perspective, a map in a fixed ``context'' $C$ is interpreted as a map in the \textbf{simple slice category} over $C$.  Simple slice categories for a given category organize themselves into a fibration,  called the simple fibration \cite[Chapter 1]{jacobs1999categorical}. 

\begin{definition}\label{simpledef} Let $\mathbb{X}$ be a category with finite products. For each object $C$, the \textbf{simple slice category} \cite[Definition 1.3.1]{jacobs1999categorical} over $C$ is the category $\mathbb{X}[C]$ where:
\begin{enumerate}[{\em (i)}]
    \item The objects are the objects of $\mathbb{X}$, $ob\left(\mathbb{X}[C]\right) := ob\left(\mathbb{X} \right)$;
    \item The hom-sets are defined as $\mathbb{X}[C](A,B) := \mathbb{X}(C \times A,B)$;
    \item The identity maps are the projection maps $\pi_1: C \times A \to A$; 
    \item The composition of maps $f: C \times A \to B$ and $g: C \times B \to D$ is the map ${\langle \pi_0, f \rangle g: C \times A \to D}$. 
\end{enumerate}
For each map $h: C^\prime \to C$ in $\mathbb{X}$, define the \textbf{substitution functor} ${h^\ast: \mathbb{X}[C] \to \mathbb{X}[C^\prime]}$ on objects as $h^\ast(A) := A$ and on maps as $h^\ast(f) := (h \times 1)f$. 
\end{definition}

Note that for the terminal object $\top$ there is an isomorphism of categories $\mathbb{X}[\top] \cong \mathbb{X}$. Every simple slice $\mathbb{X}[C]$ admits finite products where on objects the product is the same as in $\mathbb{X}$, the projection maps are respectively $\pi_1 \pi_0: C \times (A \times B) \to A$ and $\pi_1 \pi_1: C \times (A \times B) \to B$, and the pairing of maps is the same as in $\mathbb{X}$. If $\mathbb{X}$ is a Cartesian left additive category, then so is every simple slice $\mathbb{X}[C]$ \cite[Corollary 1.3.5]{blute2009cartesian} where the sum and zero maps are defined again as in $\mathbb{X}$. As such, we can easily define what it means for a map to be additive in context. 

\begin{definition}\label{addseconddef} In a Cartesian left additive category $\mathbb{X}$, we say that a map $f: C \times A \to B$ is:
\begin{enumerate}[{\em (i)}]
\item \textbf{Constant in its second argument} if it is constant in $\mathbb{X}[C]$, that is, if $\langle \pi_0, 0 \rangle f = f$;
\item \textbf{Reduced in its second argument} if it is reduced in $\mathbb{X}[C]$, that is, if $\langle \pi_0, 0 \rangle f =0$;
\item \textbf{Semi-additive in its second argument} if it is semi-additive in $\mathbb{X}[C]$, that is, if \\
${\langle \pi_0, g+h \rangle f = \langle \pi_0, g \rangle f + \langle \pi_0, h \rangle f}$;
\item \textbf{Additive in its second argument} if it is additive in $\mathbb{X}[C]$, that is, if $\langle \pi_0, 0 \rangle f =0$ and $\langle \pi_0, g+h \rangle f = \langle \pi_0, g \rangle f + \langle \pi_0, h \rangle f$. 
\end{enumerate}
\end{definition}

\begin{lemma}\label{addlem2} In a Cartesian left additive category $\mathbb{X}$, 
\begin{enumerate}[{\em (i)}]
\item \label{addlem2.i} A map $f: C \times A \to B$ is constant in its second argument if and only if $f= \pi_0 g$ for some map ${g: C \to B}$;
\item \label{addlem2.ii} A map $f: C \times A \to B$ is additive in its second argument if and only if $\langle \pi_0, 0 \rangle f=0$ and $(1 \times \oplus_A) f = (1 \times \pi_0) f + (1 \times \pi_1)f$. 
\end{enumerate}
\end{lemma}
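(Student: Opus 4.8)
The plan is to prove each part by a short direct computation, using only the universal property of the finite products together with the single left-additive axiom that pre-composition distributes over sums; no new idea is needed beyond careful book-keeping.

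For part (i), the backward direction is immediate: if $f = \pi_0 g$ then $\langle \pi_0, 0 \rangle f = \langle \pi_0, 0 \rangle \pi_0 g = \pi_0 g = f$, so $f$ is constant in its second argument. For the forward direction, I would assume $\langle \pi_0, 0 \rangle f = f$ and take $g := \langle 1, 0 \rangle f : C \to B$, with $\langle 1, 0 \rangle : C \to C \times A$; then $\pi_0 g = \pi_0 \langle 1, 0 \rangle f = \langle \pi_0, 0 \rangle f = f$, using $\pi_0 \langle 1, 0 \rangle = \langle \pi_0, 0 \rangle$.

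For part (ii), I would first note that the clause $\langle \pi_0, 0 \rangle f = 0$ (reducedness in the second argument) occurs on both sides, so it suffices to show that semi-additivity in the second argument is equivalent to $(1 \times \oplus_A) f = (1 \times \pi_0) f + (1 \times \pi_1) f$. The facts doing the work are the identities $\langle \pi_0, \pi_1 \pi_0 \rangle = 1 \times \pi_0$, $\langle \pi_0, \pi_1 \pi_1 \rangle = 1 \times \pi_1$ and $\langle \pi_0, \pi_1 \oplus_A \rangle = 1 \times \oplus_A$ as maps $C \times (A \times A) \to C \times A$, together with $\langle \pi_0, \langle g, h \rangle \rangle (1 \times \pi_0) = \langle \pi_0, g \rangle$ (and likewise with $\pi_1$ and $h$). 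For the forward implication, I would instantiate the semi-additivity equation at the ``coordinate'' maps $g := \pi_1 \pi_0$ and $h := \pi_1 \pi_1$ of type $C \times (A \times A) \to A$: since $\pi_1 \pi_0 + \pi_1 \pi_1 = \pi_1 \oplus_A$ and $\langle \pi_0, - \rangle$ sends these three maps to the three operators above, the equation becomes exactly $(1 \times \oplus_A) f = (1 \times \pi_0) f + (1 \times \pi_1) f$. For the backward implication, given arbitrary $g, h : C \times Y \to A$, I would write $g + h = \langle g, h \rangle \oplus_A$, hence $\langle \pi_0, g + h \rangle = \langle \pi_0, \langle g, h \rangle \rangle (1 \times \oplus_A)$; post-composing with $f$, applying the hypothesis, and distributing pre-composition over the resulting sum yields $\langle \pi_0, g \rangle f + \langle \pi_0, h \rangle f$. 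Re-attaching the common reducedness clause then gives both directions of the stated equivalence.

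There is no genuine obstacle: the whole argument is product book-keeping plus the axiom $k(g+h) = kg + kh$. The one spot that warrants care is the forward direction of (ii), where one must choose the generic maps $\pi_1 \pi_0, \pi_1 \pi_1 : C \times (A \times A) \to A$ — the categorical stand-ins for the two coordinate variables of the second argument — and then correctly recognise the composites $\langle \pi_0, \pi_1 \pi_0 \rangle$, $\langle \pi_0, \pi_1 \pi_1 \rangle$ and $\langle \pi_0, \pi_1 \oplus_A \rangle$ as $1 \times \pi_0$, $1 \times \pi_1$ and $1 \times \oplus_A$ respectively.
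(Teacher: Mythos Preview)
Your proposal is correct and essentially matches the paper's proof. Part (i) is verbatim the same argument: the backward direction is immediate, and for the forward direction you set $g = \langle 1, 0 \rangle f$ and compute $\pi_0 g = \langle \pi_0, 0 \rangle f = f$. For part (ii), the paper simply observes that being additive in the second argument is the same as being additive in the simple slice $\mathbb{X}[C]$, and then invokes Lemma~\ref{opluslem0}.(\ref{opluslem3}) in that slice; your direct computation is precisely what one obtains when unpacking that invocation (instantiating at the generic coordinates $\pi_1\pi_0, \pi_1\pi_1$ for the forward direction and factoring through $\langle g,h \rangle \oplus_A$ for the backward direction).
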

\begin{proof} For (\ref{addlem2.i}), it is immediate that $\langle \pi_0, 0 \rangle \pi_0 g = \pi_0 g$, so $f = \pi_0g$ is constant in its second argument.   Conversely, if $f$ is constant in its second argument then set $g = \langle 1,0 \rangle f$ then:
\begin{align*} \pi_0 g &=~ \pi_0 \langle 1,0 \rangle f \\
&=~ \langle \pi_0, 0 \rangle f \\
&=~ f \tag{$f$ is constant in its second argument}
\end{align*}
So, $f = \pi_0 g$. For  (\ref{addlem2.ii}), since being additive in its second argument is the same as being additive in the simple slice,  (\ref{addlem2.ii}) is simply re-expressing Lemma \ref{opluslem0}.(\ref{opluslem3}) using simple slice composition. 
\end{proof}

In classical multivariable differential calculus, the standard way of defining partial differentiation, or in other words differentiation in context, is by evaluating at zero certain terms of the total derivative. This is also how one obtains partial derivatives in a Cartesian differential category. In fact, for a Cartesian differential category, every simple slice is also a Cartesian differential category whose differential combinator is given by partial differentiation, which amounts to evaluating at zero in the context arguments of the total derivative. 

\begin{proposition}\label{Dcontext}\cite[Corollary 4.5.2]{blute2009cartesian} Let $\mathbb{X}$ be a Cartesian differential category with differential combinator $\mathsf{D}$. Then each simple slice $\mathbb{X}[C]$ is a Cartesian differential category with differential combinator $\mathsf{D}^{C}$ defined as follows for a map $f: C \times A \to B$ in $\mathbb{X}$: 
\begin{equation}\label{DCdef}\begin{gathered} \mathsf{D}^{C }[f] = \xymatrixcolsep{5pc}\xymatrix{C \times (A \times A) \ar[r]^-{\left \langle 1 \times \pi_0, 0 \times \pi_1 \right \rangle} & (C \times A) \times (C \times A) \ar[r]^-{\mathsf{D}[f]} & B }\end{gathered}\end{equation}
Furthermore, for every map $h: C^\prime \to C$ in $\mathbb{X}$, the substitution functor $h^\ast$ preserves the differential combinator in context \cite[Proposition 4.1.3]{blute2015cartesian}, that is, $(h \times 1)\mathsf{D}^{C^\prime }[f]= \mathsf{D}^{C } \left[ (h \times 1)f \right]$.
\end{proposition}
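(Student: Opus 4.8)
The plan is to verify the seven axioms \textbf{[CD.1]}--\textbf{[CD.7]} for $\mathsf{D}^{C}$ in $\mathbb{X}[C]$ one at a time, in each case first unfolding the simple-slice operations --- identities $\pi_1$, composition $\langle \pi_0, - \rangle (-)$, products with projections $\pi_1\pi_0$ and $\pi_1\pi_1$, and the sum and zero inherited from $\mathbb{X}$ --- into operations of $\mathbb{X}$, and then reducing to the corresponding axiom for $\mathsf{D}$. Write $\mathsf{w}_C := \langle 1 \times \pi_0 , 0 \times \pi_1 \rangle \colon C \times (A \times A) \to (C \times A) \times (C \times A)$ for the reindexing map in \eqref{DCdef}, so that $\mathsf{D}^{C}[f] = \mathsf{w}_C \, \mathsf{D}[f]$. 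The essential preliminary observation is that $\mathsf{w}_C$ is $\mathsf{D}$-linear: it is assembled from identities, zero maps, projections, composites and pairings, so this follows from Lemma \ref{linlem}. Hence, by Lemma \ref{linlem}.(\ref{linlem.pre}), $\mathsf{D}[\mathsf{w}_C \, k] = (\mathsf{w}_C \times \mathsf{w}_C) \mathsf{D}[k]$ for every $k$, which is exactly what makes the second-order axioms tractable.

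The purely additive axioms are nearly immediate. Since $\mathsf{D}^{C}$ is $\mathsf{D}$ followed by precomposition with the fixed map $\mathsf{w}_C$, and precomposition is additive and commutes with pairing in a Cartesian left additive category, \textbf{[CD.1]} and \textbf{[CD.4]} follow directly from \textbf{[CD.1]} and \textbf{[CD.4]} for $\mathsf{D}$, using that the sums, zeros and pairings of $\mathbb{X}[C]$ coincide with those of $\mathbb{X}$. For \textbf{[CD.3]} one computes $\mathsf{D}[\pi_1] = \pi_1\pi_1$, and $\mathsf{D}[\pi_1\pi_0] = \pi_1\pi_1\pi_0$, $\mathsf{D}[\pi_1\pi_1] = \pi_1\pi_1\pi_1$ (the latter two since $\pi_1\pi_0$ and $\pi_1\pi_1$ are linear), and then checks that precomposing by $\mathsf{w}_C$ recovers the simple-slice identity and the simple-slice projections. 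For \textbf{[CD.2]}, unfolding $\oplus_A$, $\pi_0$, $\pi_1$ and $\langle 1, 0 \rangle$ in $\mathbb{X}[C]$ reduces the two required equations, after an evident reindexing, to \textbf{[CD.2]} for $\mathsf{D}$ precomposed with a suitable pairing.

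The chain rule \textbf{[CD.5]} is the most laborious case. The simple-slice composite of $f \colon C \times A \to B$ and $g \colon C \times B \to D$ is $\langle \pi_0, f \rangle g$, so $\mathsf{D}^{C}[\langle \pi_0, f \rangle g] = \mathsf{w}_C \, \mathsf{D}[\langle \pi_0, f \rangle g]$; expanding the right-hand side with \textbf{[CD.5]} for $\mathsf{D}$, and then rewriting $\mathsf{D}[\langle \pi_0, f \rangle] = \langle \pi_1 \pi_0, \mathsf{D}[f] \rangle$ using \textbf{[CD.4]} and \textbf{[CD.3]} for $\mathsf{D}$, one reassembles the result into precisely the right-hand side of \textbf{[CD.5]} computed in $\mathbb{X}[C]$ (the $\mathbb{X}[C]$-composite of $\langle \pi_0 \text{'s simple-slice composite with } f, \mathsf{D}^{C}[f] \rangle$ with $\mathsf{D}^{C}[g]$). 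The only genuine content is matching up the reindexing maps $\mathsf{w}_C$ on the two sides, a routine but bookkeeping-heavy verification done by postcomposing with projections (equivalently, by chasing generalized elements).

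For \textbf{[CD.6]} and \textbf{[CD.7]} one first writes $\ell^{\mathbb{X}[C]}$ and $c^{\mathbb{X}[C]}$ out explicitly as maps of $\mathbb{X}$, and then uses $\mathsf{D}$-linearity of $\mathsf{w}_C$ to get $\mathsf{D}^{C}[\mathsf{D}^{C}[f]] = \mathsf{w}'_C (\mathsf{w}_C \times \mathsf{w}_C) \mathsf{D}[\mathsf{D}[f]]$, where $\mathsf{w}'_C$ is the reindexing map of the appropriate type. Then \textbf{[CD.6]} for $\mathsf{D}^{C}$ reduces, via \textbf{[CD.6]} for $\mathsf{D}$, to the single wiring identity $\langle \pi_0, \ell^{\mathbb{X}[C]} \rangle \, \mathsf{w}'_C (\mathsf{w}_C \times \mathsf{w}_C) = \mathsf{w}_C \, \ell$, and \textbf{[CD.7]} reduces to $\langle \pi_0, c^{\mathbb{X}[C]} \rangle \, \mathsf{w}'_C (\mathsf{w}_C \times \mathsf{w}_C) = \mathsf{w}'_C (\mathsf{w}_C \times \mathsf{w}_C) \, c$; both are checked componentwise. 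For the last assertion, $h \times 1$ is linear (Lemma \ref{linlem}), so $\mathsf{D}[(h \times 1) f] = ((h \times 1) \times (h \times 1)) \mathsf{D}[f]$ by Lemma \ref{linlem}.(\ref{linlem.pre}), and a final wiring identity relating $\mathsf{w}$ at $C$ and at $C'$ yields compatibility with the substitution functor. The main obstacle throughout is not conceptual but purely combinatorial: keeping the nested tangle of projections and pairings straight through the iterated-derivative axioms \textbf{[CD.6]}--\textbf{[CD.7]}; isolating the handful of wiring identities above, and verifying those componentwise rather than by brute symbolic expansion, is what keeps the argument under control.
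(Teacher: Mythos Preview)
The paper does not give its own proof of this proposition: it simply cites \cite[Corollary 4.5.2]{blute2009cartesian} for the first part and \cite[Proposition 4.1.3]{blute2015cartesian} for the substitution-functor claim. Your proposal, by contrast, sketches a direct verification of \textbf{[CD.1]}--\textbf{[CD.7]} in $\mathbb{X}[C]$. That strategy is sound and, for the first seven axioms, your outline is correct: isolating the reindexing map $\mathsf{w}_C$, noting it is $\mathsf{D}$-linear, and reducing each axiom to a wiring identity plus the corresponding axiom for $\mathsf{D}$ is exactly how such a proof goes.

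There is, however, a genuine error in your treatment of the substitution functor. You assert that ``$h \times 1$ is linear (Lemma \ref{linlem})'' for an arbitrary map $h\colon C'\to C$. This is false: Lemma \ref{linlem} only closes the class of linear maps under products, pairings, etc., of maps that are \emph{already} linear, and $h$ is not assumed linear. Consequently the identity $\mathsf{D}[(h\times 1)f]=((h\times 1)\times(h\times 1))\mathsf{D}[f]$ that you invoke does not hold in general. The correct argument uses \textbf{[CD.5]} to expand $\mathsf{D}[(h\times 1)f]$ and then \textbf{[CD.2]}: after precomposing with $\mathsf{w}_{C'}$, the direction component in the $C'$ slot is $0$, so the term $\mathsf{w}_{C'}\,\mathsf{D}[\pi_0 h]$ becomes $\langle \pi_0, 0\rangle(\pi_0\times\pi_0)\mathsf{D}[h]$, which vanishes by \textbf{[CD.2]}. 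What remains is exactly $\langle h\times\pi_0,\,0\times\pi_1\rangle\mathsf{D}[f]=(h\times 1)\mathsf{w}_C\,\mathsf{D}[f]$, as required. So the missing idea is not a new wiring identity but the use of \textbf{[CD.2]} to kill the non-linear contribution of $h$; without it your final step does not go through.
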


As with additivity, we can also define what it means for a map to be linear in context. 

\begin{definition}\label{lin2def} In a Cartesian differential category $\mathbb{X}$, a map $f: C \times A \to B$ is \textbf{linear in its second argument} if it is linear in $\mathbb{X}[C]$, that is, if $\mathsf{D}^{C}[f] = \langle \pi_0, \pi_1 \pi_1 \rangle f=(1 \times \pi_1) f$. 
\end{definition}

Being linear in context can also be expressed using the lifting map $\ell$, and as such by \textbf{[CD.6]}, it follows that derivatives of maps are also linear in their second argument. 

\begin{lemma}\label{lin2lem} In a Cartesian differential category:
\begin{enumerate}[{\em (i)}]
\item \label{lin2lem.i} A map $f: C \times A \to B$ is linear in its second argument if and only if $\ell \mathsf{D}[f] = f$. 
\item \label{lin2lem.ii} For every map $f: A \to B$, $\mathsf{D}[f]: A \times A \to B$ is linear in its second argument. 
\end{enumerate}
\end{lemma}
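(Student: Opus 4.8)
The plan is to reduce both parts to results already in hand by passing to the simple slice $\mathbb{X}[C]$, which by Proposition \ref{Dcontext} is itself a Cartesian differential category with differential combinator $\mathsf{D}^C$.

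The first step is a purely equational observation about $\ell$. Unfolding (\ref{ldef}) gives $\ell_{C,A,C,A} = \langle 1,0\rangle \times \langle 0,1\rangle = \langle\langle \pi_0, 0\rangle, \langle 0,\pi_1\rangle\rangle$, and a short manipulation of projections, pairings and zero maps shows
\[ \ell_{C,A,C,A} = \langle \pi_0, \langle 0, \pi_1\rangle\rangle \, \langle 1 \times \pi_0,\, 0 \times \pi_1\rangle. \]
Post-composing with $\mathsf{D}[f]$ and recognising the right-hand factor as the map defining $\mathsf{D}^C$ in (\ref{DCdef}), this yields $\ell\,\mathsf{D}[f] = \langle \pi_0, \langle 0,\pi_1\rangle\rangle\,\mathsf{D}^C[f]$. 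The next step is to notice that this right-hand side is exactly the composite, computed in $\mathbb{X}[C]$, of the $\mathbb{X}[C]$-map $\langle 0,1\rangle : A \to A \times A$ with $\mathsf{D}^C[f]$: since identities, zeros and pairings in $\mathbb{X}[C]$ are the projections $\pi_1$, the zero maps and the pairings of $\mathbb{X}$, the map $\langle 0,1\rangle$ of $\mathbb{X}[C]$ is $\langle 0, \pi_1\rangle : C \times A \to A \times A$ in $\mathbb{X}$, and composition in $\mathbb{X}[C]$ sends $g, h$ to $\langle \pi_0, g\rangle h$.

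For part (i), I would then invoke Lemma \ref{linlemimportant}.(\ref{linlemimportant2}) inside the Cartesian differential category $\mathbb{X}[C]$: by Definition \ref{lin2def}, $f$ is linear in its second argument precisely when it is $\mathsf{D}^C$-linear, which by that lemma holds if and only if $f = \langle 0,1\rangle\,\mathsf{D}^C[f]$ in $\mathbb{X}[C]$, i.e. if and only if $f = \ell\,\mathsf{D}[f]$ in $\mathbb{X}$ by the previous step. For part (ii), I would apply part (i) to the map $\mathsf{D}[f] : A \times A \to B$ regarded in context $A$: it is linear in its second argument if and only if $\ell\,\mathsf{D}\!\left[\mathsf{D}[f]\right] = \mathsf{D}[f]$, and this is exactly axiom \textbf{[CD.6]}.

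I do not anticipate a genuine obstacle: the only computation is the identity $\ell = \langle \pi_0, \langle 0,\pi_1\rangle\rangle\,\langle 1 \times \pi_0, 0 \times \pi_1\rangle$ together with the translation between composition in $\mathbb{X}$ and in $\mathbb{X}[C]$, both routine manipulations. The one thing to be careful about is bookkeeping — keeping straight which $\langle 0,1\rangle$, which identity, and which composition operation is meant, and in which category. (One could instead prove part (i) directly from \textbf{[CD.6]}, \textbf{[CD.7]}, the linearity of $\ell$ from Corollary \ref{corlin}.(\ref{corlin.ell}) and Lemma \ref{linlem}.(\ref{linlem.pre}), but the slice argument is cleaner.)
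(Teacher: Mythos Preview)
Your proposal is correct and follows essentially the same approach as the paper: both establish the identity $\ell\,\mathsf{D}[f] = \langle \pi_0, \langle 0,\pi_1\rangle\rangle\,\mathsf{D}^C[f]$, recognise the right-hand side as the slice composite $\langle 0,1\rangle\,\mathsf{D}^C[f]$ in $\mathbb{X}[C]$, and then invoke Lemma~\ref{linlemimportant}.(\ref{linlemimportant2}) in the slice for part~(\ref{lin2lem.i}) and \textbf{[CD.6]} for part~(\ref{lin2lem.ii}). The only cosmetic difference is that the paper expands $\mathsf{D}^C[f]$ and computes forward, whereas you factor $\ell$ first and compute backward; the content is identical.
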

\begin{proof} For (\ref{lin2lem.i}), we first observe that for an arbitrary map $f: C \times A \to B$, we compute: 
\begin{align*}
\langle \pi_0, \langle 0, \pi_1 \rangle \rangle D^C[f] &=~  \langle \pi_0, \langle 0, \pi_1 \rangle \rangle \langle 1 \times \pi_0, 0 \times \pi_1 \rangle D[f] \\
& = \langle \langle \pi_0, \langle 0, \pi_1 \rangle \rangle (1 \times \pi_0) , \langle \pi_0, \langle 0, \pi_1 \rangle \rangle (0 \times \pi_1) \rangle D[f] \\
& = \langle \langle \pi_0, \langle 0, \pi_1 \rangle \pi_0 \rangle  , \langle 0, \langle 0, \pi_1 \rangle \pi_1 \rangle\rangle D[f] \\
& =  \langle \langle \pi_0, 0 \rangle  , \langle 0, \pi_1 \rangle\rangle D[f] \\
& =  \langle \pi_0 \langle 1, 0 \rangle  , \pi_1 \langle 0, 1 \rangle\rangle D[f] \\
& =  (\langle 1,0 \rangle \times \langle 0,1 \rangle) D[f] \\
& =  \ell D[f]
\end{align*}
So $\langle \pi_0, \langle 0, \pi_1 \rangle \rangle D^C[f]  = \ell D[f]$. Now recall that by Lemma \ref{linlemimportant}.(\ref{linlemimportant2}), in a Cartesian differential category a map $g$ is linear if and only if $\langle 0,1 \rangle D[g] = g$. Since every simple slice category is again a Cartesian differential category, then putting Lemma \ref{linlemimportant}.(\ref{linlemimportant2}) into a context $C$ means that a map $f: C \times A \to B$ is linear in its second argument if and only if  
$\langle \pi_0, \langle 0, \pi_1 \rangle \rangle D^C[f] = f$. However by the above calculations, we may re-express by saying that a map $f: C \times A \to B$ is linear in its second argument if and only if $\ell \mathsf{D}[f] = f$. For (\ref{lin2lem.ii}), for any map $f: A \to B$, by \textbf{[CD.6]} we have that $\ell ~\mathsf{D}\!\left[\mathsf{D}[f] \right] = \mathsf{D}[f]$. Then by (\ref{lin2lem.i}), it follows that $\mathsf{D}[f]$ is linear in its second argument. 
\end{proof}

We conclude this section by taking a look at the differential combinators in context and maps which are linear in context in the examples of Cartesian differential categories from Section \ref{CDCsec}. 

\begin{example} \normalfont In a category with finite biproducts, the differential combinator in context $C$ is defined on a map $f: C \times A \to B$ as follows:  
\[\mathsf{D}^C[f] = (0 \times \pi_1) f\] 
A map $f: C \times A \to B$ is linear in its second argument if and only if $f = (0 \times 1) f$, or in other words, if $f$ does not depend on its first argument. 
\end{example}

\begin{example} \normalfont   In $\mathsf{SMOOTH}$, for a smooth function $F: \mathbb{R}^k \times \mathbb{R}^n \to \mathbb{R}^m$, $F = \langle f_1, \hdots, f_m \rangle$, its partial derivative is the smooth function $\mathsf{D}^{\mathbb{R}^k}[F]: \mathbb{R}^k \times (\mathbb{R}^n \times \mathbb{R}^n) \to \mathbb{R}^m$ defined as follows: 
\[ \mathsf{D}^{\mathbb{R}^k}[F](\vec z, \vec x, \vec y) = \nabla(F)(\vec z, \vec x) \cdot (\vec 0, \vec y) =\left \langle \sum \limits^n_{i=1} \frac{\partial f_1}{\partial x_i}(\vec z,\vec x) y_i, \hdots, \sum \limits^n_{i=1} \frac{\partial f_m}{\partial x_i}(\vec z,\vec x) y_i \right \rangle \]
A smooth function $F: \mathbb{R}^k \times \mathbb{R}^n \to \mathbb{R}^m$ is linear in its second argument if it is $\mathbb{R}$-linear in its second argument, that is, $F(\vec z, s \vec x + t \vec y) = sF(\vec z, \vec x) + t F(\vec z, \vec y)$ for all $s,t \in \mathbb{R}$, $\vec z \in \mathbb{R}^k$, and $\vec x, \vec y \in \mathbb{R}^n$. 
\end{example}

\begin{example} \normalfont In $\mathsf{HoAbCat}_\mathsf{Ch}$, the differential combinator in context $C$ is defined on a functor $F: \mathbb{C} \times \mathbb{A} \to \mathsf{Ch}(\mathbb{B})$ as follows: 
\[ \nabla^C F (Z,X,V) := D^2_1 F(Z,X \oplus -)(V)  \]
where $\mathsf{D}^i_1$ is the partial linearization operator as defined in \cite[Convention 5.11]{bauer2018directional}. A functor ${F: \mathbb{C} \times \mathbb{A} \to \mathsf{Ch}(\mathbb{B})}$ is linear in its second argument if $F$ preserves finite direct sums up to chain homotopy equivalence in its second argument. 
\end{example}

\begin{example} \normalfont For a Cartesian left additive category $\mathbb{X}$, the differential combinator in context $C$ for its cofree Cartesian differential category $\mathcal{D}(\mathbb{X})$ is worked out to be as follows for a $\mathsf{D}$-sequence $(f_0, f_1, f_2, \hdots): C \times A \to B$ (so $f_n: \mathsf{P}^n(C \times A) \to B$): 
\[ \mathsf{D}^C\left[ (f_0, f_1, f_2, \hdots ) \right] = \left( (\left \langle 1 \times \pi_0, 0 \times \pi_1 \right \rangle) f_1, \mathsf{P}\left( \left \langle 1 \times \pi_0, 0 \times \pi_1 \right \rangle \right) f_2,  \mathsf{P}^2\left( \left \langle 1 \times \pi_0, 0 \times \pi_1 \right \rangle \right) f_3, \hdots \right)  \]
A $\mathsf{D}$-sequence $(f_0, f_1, f_2, \hdots): C \times A \to B$ is linear in its second argument if and only if for all $n \in \mathbb{N}$: $f_n =  \underbrace{\ell \hdots \ell}_{n-times} f_0$ . 
\end{example}

\begin{example} \normalfont For a differential category $\mathbb{X}$ with finite products, the differential combinator in context $C$ for the coKleisli category $\mathbb{X}_\oc$ is worked out to be as follows for a coKleisli map ${f: \oc (C \times A) \to B}$: 
\[\begin{array}[c]{c} \mathsf{D}^C[f] \end{array} := \begin{array}[c]{c} \xymatrixrowsep{1pc}\xymatrixcolsep{3.5pc}\xymatrix{\oc\left(C \times (A \times A)\right) \ar[r]^-{\langle 1 \times \pi_0, \pi_1 \pi_1 \rangle} &\oc\left((C \times A) \times A\right)  \ar[r]^-{\chi_{C \times A, A}} & \oc(C \times A) \otimes \oc A \ar[r]^-{1 \otimes \varepsilon_A} &  \\
\oc (C \times A) \otimes A \ar[r]^-{1 \otimes \langle 0,1 \rangle} & \oc(C \times A) \otimes (C \times A) \ar[r]^-{\mathsf{d}_{C \times A}} & \oc(C \times A) \ar[r]^-{f} & B 
 }  \end{array}\] 
Applying Lemma \ref{lin2lem}.(\ref{lin2lem.i}), and being careful with coKleisli composition, one can show that a coKleisli map $f: \oc (C \times A) \to B$ is linear in its second argument if and only if 
\[\chi_{C,A} (1 \otimes \varepsilon_A)\left( \oc(\langle 1, 0 \rangle) \otimes \langle 0,1 \rangle \right) \mathsf{d}_{C \times A} f = f\] 
In particular, for every map $g: \oc C \otimes A \to B$ in $\mathbb{X}$, $\chi_{C,A}  (1 \otimes \varepsilon_A) g: \oc (C \times A) \to B$ is linear in its second argument in the coKleisli category $\mathbb{X}_\oc$. 
\end{example}

\begin{example} \normalfont For a differential storage category $\mathbb{X}$, the differential combinator in context $C$ for the coKleisli category $\mathbb{X}_\oc$ can alternatively be expressed using the codereliction map and the Seely isomorphisms as follows for a coKleisli map ${f: \oc (C \times A) \to B}$: 
\[  \begin{array}[c]{c}\mathsf{D}^C[f]  \end{array} \!\!\!:=\!\!\! \begin{array}[c]{c} \xymatrixrowsep{1pc}\xymatrixcolsep{2.5pc}\xymatrix{\oc\left(C \times (A \times A)\right) \ar[r]^-{\chi_{C,A \times A}} & \oc C \otimes \oc (A \times A)  \ar[r]^-{1 \otimes \chi_{A,A}} & \oc C \otimes \left( \oc A \otimes \oc A \right)   \ar[r]^-{1 \otimes (1 \otimes \varepsilon_A) } &\\
 \oc C \otimes \left( \oc A \otimes A \right)  \ar[r]^-{1 \otimes (1 \otimes \eta_A) } &  \oc C \otimes \left( \oc A \otimes \oc A \right)  \ar[r]^-{1 \otimes \nabla_A} & \oc C \otimes \oc A \ar[r]^-{\chi^{-1}_{C,A}} & \oc(C \times A)  \ar[r]^-{f} & B 
 }  \end{array} \]
In this case, the maps which are linear in their second argument in the coKleisli category $\mathbb{X}_\oc$ are precisely those of the form  $\chi_{C,A}  (1 \otimes \varepsilon_A) g: \oc (C \times A) \to B$ for a map $g: \oc C \otimes A \to B$ in $\mathbb{X}$. 
\end{example}

\begin{example} \normalfont In $\mathsf{CON}$, the differential combinator in context $C$ is defined as follows on a smooth function $f: C \times E \to F$:
\[ \mathsf{D}^C[f](z,x,y) := \lim \limits_{t \to 0} \frac{f(z, x + t \cdot y) - f(z,x)}{t} \]
A smooth function is linear in its second argument in the Cartesian differential sense precisely when it is $\mathbb{R}$-linear in its second argument. 
\end{example}

\section{System of Linearizing Combinators} \label{system-sec}

Partial linearization is a key operation in \cite{bauer2018directional} as it used in the construction of the differential combinator. However, while it is always possible to define partial differentiation from total differentiation, in general it is not necessarily possible to define partial linearization from total linearization (see Example \ref{counter-example} at the end of this section). As such, we need to separately define the notion of linearizing combinators in contexts, which we call a \emph{system} of linearizing combinators, which amounts to requiring that each simple slice admits a linearizing combinator. This means that the first six axioms in Definition \ref{syslindef} below simply place the axioms of Definition \ref{lindef} in an arbitrary context --  this is why they are given the same label.  However, we need an additional two axioms to ensure the correct relation between the linearizing combinators in the slices. 

We will show systems of linearizing combinators are in bijective correspondence with differential combinators. Therefore, a system of linearizing combinators provides an alternative axiomatization for a Cartesian differential category. 

\begin{definition}\label{syslindef} A \textbf{system of linearizing combinators} on a Cartesian left additive category $\mathbb{X}$ is a family of linearizing combinators $\mathsf{L}^{C}$ indexed by every object $C \in \mathbb{X}$, where $\mathsf{L}^{C}$ is a linearizing combinator for the simple slice category $\mathbb{X}[C]$, that is, the following axioms hold: 
\begin{enumerate}[{\bf [L.1]}]
\item $\mathsf{L}^{C}[f+g]=\mathsf{L}^{C}[f]+\mathsf{L}^{C}[g]$ and $\mathsf{L}^{C}[0]=0$
\item $\mathsf{L}^{C}[f]$ is additive in its second argument, or equivalently by Lemma \ref{addlem2}.(\ref{addlem2.ii}):
\begin{align*}(1 \times \oplus_A) \mathsf{L}^{C}[f] = (1 \times \pi_0) \mathsf{L}^{C}[f]+ (1 \times \pi_1) \mathsf{L}^{C}[f] && \langle \pi_0, 0 \rangle \mathsf{L}^{C}[f]=0
\end{align*}
\item $\mathsf{L}^{C}[\pi_1]=\pi_1$, $\mathsf{L}^{C}[\pi_1\pi_0]=\pi_1\pi_0$, and $\mathsf{L}^{C}[\pi_1\pi_1]=\pi_1\pi_1$
\item $\mathsf{L}^{C}\left[ \langle f, g \rangle \right] = \left \langle \mathsf{L}^{C}[f], \mathsf{L}^{C}[g] \right \rangle $
\item $\mathsf{L}^{C}[\langle \pi_0, f \rangle g] = \langle \pi_0, \mathsf{L}^{C}[f] \rangle~ \mathsf{L}^{C}\left[\left \langle \pi_0, \pi_1 + \langle \pi_0, 0 \rangle f \right \rangle g\right]$
\item $\mathsf{L}^{C}\left[\mathsf{L}^{C}[f]\right] = \mathsf{L}^{C}[f]$
\end{enumerate}
and such that the following two extra axioms hold: 
\begin{enumerate}[{\bf [L.1]}]
\setcounter{enumi}{6}
\item Let $\alpha: C \times (A \times B) \to (C \times A) \times B$ and $\beta: C \times (A \times B) \to (C \times B) \times A$ be the canonical natural isomorphisms respectively defined as follows: 
\begin{align*}
\alpha = \langle 1 \times \pi_0, \pi_1 \pi_1 \rangle && \beta = \langle 1 \times \pi_1, \pi_1 \pi_0 \rangle
\end{align*}
and for a map $f: C \times (A \times B) \to D$, define the maps $\mathsf{L}^C_0[f]:  C \times (A \times B) \to D$ and $\mathsf{L}^C_1[f]:  C \times (A \times B) \to D$ respectively as follows: 
\begin{align*}
\mathsf{L}^C_0[f] : = \beta \mathsf{L}^{C \times B}[\beta^{-1} f] && \mathsf{L}^C_1[f] : = \alpha \mathsf{L}^{C \times A}[\alpha^{-1} f] 
\end{align*}
Then for any map $f: C \times (A \times B) \to D$, $ \mathsf{L}^C_1[\mathsf{L}^C_0[f]] =  \mathsf{L}^C_0[\mathsf{L}^C_1[f]]$.  
\item For any map $h: C^\prime \to C$ in $\mathbb{X}$, the substitution functor $h^\ast$ (as defined in Definition \ref{simpledef}) preserves the linearizing combinator in context, that is, $(h \times 1)\mathsf{L}^{C}[f]  = \mathsf{L}^{C^\prime}[(h \times 1)f]$ 
\end{enumerate}
\end{definition}

 {\bf [L.8]} simply says that partial linearization is unaffected by changes in the context argument.  On the other hand, {\bf [L.7]} is admittedly slightly complex at first glance, however it amounts to the linearizing combinator analogue of {\bf [CD.7]} and states the symmetry of partial linearization. Indeed, $\mathsf{L}^C_0[f]$ is the linearization of $A$ while keeping $C$ and $B$ in context, while $\mathsf{L}^C_1[f]$ is the linearization of $B$ while keeping $C$ and $A$ in context. In particular, {\bf [L.7]} is also a generalization of \cite[Lemma 5.15]{bauer2018directional}, which expresses sequential linearization as discussed in \cite[Convention 5.11]{bauer2018directional} (though in \cite{bauer2018directional}, there is no extra context variables, that is, $C = \top$ -- the terminal object). Therefore, {\bf [L.7]} expresses that linearizing $A$ first then linearizing $B$ (while keeping the other variables in context) is the same as linearizing $B$ first then $A$. As an example, consider the polynomial function $f(x,y) = xy + 2xy^3+ 3x + 4y$. The total linearization of $f$, that is, linearizing $f$ jointly in $x$ and $y$ is the polynomial $\mathsf{L}[f](x,y) = 3x + 4y$. Linearizing $f$ in terms of $x$ while keeping $y$ in context picks out the terms where $x$ is of degree 1, and therefore results in the polynomial $\mathsf{L}_0[f] = xy + 2xy^3 + 3x$, which is now linear in $x$. On the other hand, linearizing $f$ in terms of $y$ while keeping $x$ in context results in the polynomial $\mathsf{L}_1[f] = xy + 4y$, which this time is linear in $y$. Linearizing $xy + 2xy^3 + 3x$ in terms of $y$ or linearizing $xy + 4y$ in terms of $x$ both results in $\mathsf{L}_1[\mathsf{L}_0[f]] =  \mathsf{L}_0[\mathsf{L}_1[f]] = xy$, which is an example of {\bf [L.7]}. In Proposition \ref{L7Prop}, we will provide an equivalent alternative version of \textbf{[L.7]} which requires less setup.  

Our first observation is that, since there is an isomorphism between the base category and the simple slice category over the terminal object, it follows that a system of linearizing combinators also induces a linearizing combinator on the base category. 

\begin{proposition}\label{lemL1} Let $\mathbb{X}$ be a Cartesian left additive category with a system of linearizing combinators $\mathsf{L}^{C}$. Then $\mathbb{X}$ has a linearizing combinator $\mathsf{L}$ defined as follows for a map $f: A \to B$: 
\begin{equation}\label{L1def}\begin{gathered} \mathsf{L}[f] = \xymatrixcolsep{5pc}\xymatrix{A \ar[r]^-{\langle 0,1 \rangle} & \top \times A \ar[r]^-{\mathsf{L}^\top[\pi_1 f]} & B }\end{gathered}\end{equation}
where $\top$ is the terminal object. Furthermore: 
\begin{enumerate}[{\em (i)}]
\item \label{lemL1.i} For every map $f: A \to B$ and every object $C$, $\mathsf{L}^C[\pi_1f] = \pi_1 \mathsf{L}[f]$;
\item \label{lemL1.ii} If $f$ is $\mathsf{L}$-linear then for every object $C$, $\pi_1 f$ is $\mathsf{L}^C$-linear;
\item \label{lemL1.iii} For every map $\mathsf{L}$-linear map $f$, $(h \times f)\mathsf{L}^{C^\prime}[g] =\mathsf{L}^C[(h \times f)g]$;
\item \label{lemL1.iv} For a map $f: A \times B \to C$, define $\mathsf{L}_0[f]: A \times B \to C$ and $\mathsf{L}_1[f]: A \times B \to C$ respectively as: 
\begin{align*}
\mathsf{L}_0[f] := \tau \mathsf{L}^B[\tau f] && \mathsf{L}_1[f] := \mathsf{L}^A[f]
\end{align*}
where $\tau$ is the canonical symmetry isomorphism as defined in (\ref{taudef}). Then for every map ${f: A \times B \to C}$, $\mathsf{L}_0[\mathsf{L}_1[f]] =  \mathsf{L}_1[\mathsf{L}_0[f]]$.  
\end{enumerate}
\end{proposition}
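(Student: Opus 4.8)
The plan is to obtain $\mathsf{L}$ by transporting $\mathsf{L}^\top$ along the canonical isomorphism $\Phi : \mathbb{X} \to \mathbb{X}[\top]$ of Cartesian left additive categories, which sends a map $f : A \to B$ to $\pi_1 f : \top \times A \to B$ and whose inverse sends $g : \top \times A \to B$ to $\langle 0,1 \rangle g$; here $\langle 0,1 \rangle : A \to \top \times A$ and $\pi_1 : \top \times A \to A$ are mutually inverse because $0 : A \to \top$ is the unique map. Since $\Phi$ preserves composition, finite products, sums and zero maps, it carries each of \textbf{[L.1]}--\textbf{[L.6]} for $\mathsf{L}^\top$ to the corresponding axiom for the operator $\mathsf{L}[f] := \Phi^{-1}\!\left(\mathsf{L}^\top[\Phi(f)]\right) = \langle 0,1 \rangle\, \mathsf{L}^\top[\pi_1 f]$, so $\mathsf{L}$ is a linearizing combinator. (Alternatively one can check \textbf{[L.1]}--\textbf{[L.6]} directly, but routing through $\Phi$ keeps the bookkeeping minimal.)

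For (i) I would apply \textbf{[L.8]} to the unique map $0 : C \to \top$. After the elementary observations $\pi_1 \langle 0,1 \rangle = 0 \times 1 : C \times A \to \top \times A$ and $(0 \times 1)\pi_1 = \pi_1$, one computes, for any $f : A \to B$,
\[ \pi_1\, \mathsf{L}[f] \;=\; \pi_1 \langle 0,1 \rangle\, \mathsf{L}^\top[\pi_1 f] \;=\; (0 \times 1)\, \mathsf{L}^\top[\pi_1 f] \;=\; \mathsf{L}^C\!\left[(0 \times 1)(\pi_1 f)\right] \;=\; \mathsf{L}^C[\pi_1 f], \]
the third equality being \textbf{[L.8]}. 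Part (ii) is then immediate: if $f$ is $\mathsf{L}$-linear then $\mathsf{L}^C[\pi_1 f] = \pi_1\, \mathsf{L}[f] = \pi_1 f$, so $\pi_1 f$ is $\mathsf{L}^C$-linear.

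For (iii) I would factor $h \times f = (1 \times f)(h \times 1)$ and handle the two factors in turn. The factor $h \times 1$ is dealt with directly by \textbf{[L.8]} applied to $h : C \to C'$, giving $\mathsf{L}^C[(h \times 1)g] = (h \times 1)\, \mathsf{L}^{C'}[g]$. For the factor $1 \times f$, note that $(1 \times f)g' = \langle \pi_0, \pi_1 f \rangle g'$ is exactly the composite in $\mathbb{X}[C]$ of the map $\pi_1 f$ (regarded as an arrow of $\mathbb{X}[C]$) with $g'$; since $f$ is $\mathsf{L}$-linear, part (ii) makes $\pi_1 f$ an $\mathsf{L}^C$-linear arrow, so Lemma \ref{lstable-lem}.(\ref{lstable-lem.pre}) applied inside $\mathbb{X}[C]$ yields $\mathsf{L}^C[(1 \times f)g'] = (1 \times f)\, \mathsf{L}^C[g']$. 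Taking $g' = (h \times 1)g$ and combining the two identities gives $\mathsf{L}^C[(h \times f)g] = (1 \times f)(h \times 1)\, \mathsf{L}^{C'}[g] = (h \times f)\, \mathsf{L}^{C'}[g]$.

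For (iv) I would specialise \textbf{[L.7]} to the context $C = \top$ and transport it through $\Phi$. Writing $f' := \pi_1 f = \Phi(f) : \top \times (A \times B) \to C$, the isomorphisms $\alpha : \top \times (A \times B) \to (\top \times A) \times B$ and $\beta : \top \times (A \times B) \to (\top \times B) \times A$ of \textbf{[L.7]} become, under the identification $\top \times X \cong X$, the identity on $A \times B$ and the symmetry $\tau$ respectively; using \textbf{[L.8]} with the structural isomorphisms $\langle 0,1 \rangle$ and $\pi_1$ (for $A$, and likewise for $B$) to rewrite $\mathsf{L}^{\top \times A}$ and $\mathsf{L}^{\top \times B}$ as $\mathsf{L}^A$ and $\mathsf{L}^B$, one checks $\mathsf{L}^\top_1[f'] = \pi_1\, \mathsf{L}_1[f]$ and $\mathsf{L}^\top_0[f'] = \pi_1\, \mathsf{L}_0[f]$. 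Then \textbf{[L.7]} at $\top$ says $\pi_1\, \mathsf{L}_1[\mathsf{L}_0[f]] = \pi_1\, \mathsf{L}_0[\mathsf{L}_1[f]]$, and precomposing with $\langle 0,1 \rangle$ (i.e.\ applying $\Phi^{-1}$) gives $\mathsf{L}_1[\mathsf{L}_0[f]] = \mathsf{L}_0[\mathsf{L}_1[f]]$. I expect this last part to be the main obstacle: the delicate point is verifying that $\alpha,\beta$ at $C=\top$ correspond to $1_{A \times B}$ and $\tau$ and, crucially, that $\mathsf{L}^{\top \times A}$ and $\mathsf{L}^{\top \times B}$ match $\mathsf{L}^A$ and $\mathsf{L}^B$ after this identification — this is where \textbf{[L.7]} and \textbf{[L.8]} have to be combined correctly; everything else reduces to one application of \textbf{[L.8]} or to quoting Lemma \ref{lstable-lem} inside a simple slice.
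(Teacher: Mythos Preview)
Your proposal is correct and follows essentially the same route as the paper. The paper verifies \textbf{[L.1]}--\textbf{[L.6]} by explicit line-by-line computation rather than invoking transport along the isomorphism $\Phi:\mathbb{X}\cong\mathbb{X}[\top]$, but the content is identical; your framing is simply cleaner. Parts (i) and (ii) match the paper exactly. For (iii) the paper factors as $(h\times 1)(1\times f)$ and cites Lemma~\ref{linprop1}.(\ref{linprop1.ii}) (reducedness) in the slice, whereas you factor as $(1\times f)(h\times 1)$ and cite Lemma~\ref{lstable-lem}.(\ref{lstable-lem.pre}); both work and the difference is cosmetic. For (iv) the paper carries out precisely the calculation you describe---specialising \textbf{[L.7]} to $C=\top$, then using \textbf{[L.8]} with $\pi_1:\top\times A\to A$ (resp.\ $\langle 0,1\rangle$) to rewrite $\mathsf{L}^{\top\times A}$, $\mathsf{L}^{\top\times B}$ in terms of $\mathsf{L}^A$, $\mathsf{L}^B$---only it writes out every equality in full rather than summarising as you do. The identities $\alpha(\pi_1\times 1)=\pi_1$ and $\beta(\pi_1\times 1)=\pi_1\tau$ that underlie your identification of $\alpha,\beta$ at $C=\top$ with $1$ and $\tau$ are exactly what the paper uses implicitly, so there is no hidden obstacle in the step you flagged as delicate.
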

\begin{proof} First note that for the terminal object, $\pi_1: \top \times A \to A$ and $\langle 0,1 \rangle: A \to \top\times A$ are inverses of each other. We now show that $\mathsf{L}$ is a linearizing combinator by showing it satisfies {\bf [L.1]} to {\bf [L.6]} of Definition \ref{lindef}: \\ \\
\noindent {\bf [L.1]}: $\mathsf{L}[f+g]=\mathsf{L}[f]+\mathsf{L}[g]$ and $\mathsf{L}[0]=0$ 
\begin{align*}
\mathsf{L}[f+g]&=~ \langle 0,1 \rangle \mathsf{L}^\top[\pi_1 (f+g)] \\
&=~ \langle 0,1 \rangle \mathsf{L}^\top[\pi_1 f + \pi_1 g] \\
&=~   \langle 0,1 \rangle \left(\mathsf{L}^\top[\pi_1 f] +  \mathsf{L}^\top[\pi_1 g] \right) \tag*{\bf [L.1]} \\
&=~ \langle 0,1 \rangle \mathsf{L}^\top[\pi_1 f ] + \langle 0,1 \rangle \mathsf{L}^\top[\pi_1 g] \\
&=~ \mathsf{L}[f]+\mathsf{L}[g] \\ \\
\mathsf{L}[0] &=~  \langle 0,1 \rangle \mathsf{L}^\top[\pi_1 0 ] \\
&=~  \langle 0,1 \rangle \mathsf{L}^\top[0] \\
&=~     \langle 0,1 \rangle 0 \tag*{\bf [L.1]} \\
&=~ 0 
\end{align*}
\noindent {\bf [L.2]}: $\oplus_A \mathsf{L}[f] = \pi_0 \mathsf{L}[f]+ \pi_1 \mathsf{L}[f]$ and $0 \mathsf{L}[f]=0$ 
\begin{align*}
\oplus_A \mathsf{L}[f] &=~ \oplus_A \langle 0,1 \rangle \mathsf{L}^\top[\pi_1 f ] \\
&=~  \langle 0,\oplus_A \rangle\mathsf{L}^\top[\pi_1 f ] \\
&=~ \langle 0, 1 \rangle (1 \times \oplus_A) \mathsf{L}^\top[\pi_1 f ] \\
&=~ \langle 0, 1 \rangle (1 \times \pi_0) \mathsf{L}^\top[\pi_1 f ] + \langle 0, 1 \rangle (1 \times \pi_1) \mathsf{L}^\top[\pi_1 f] \tag*{\bf [L.2]} \\
&=~ \langle 0, \pi_0 \rangle \mathsf{L}^\top[\pi_1 f ] +  \langle 0, \pi_1 \rangle \mathsf{L}^\top[\pi_1 f ] \\
&=~ \pi_0 \langle 0,1 \rangle \mathsf{L}^\top[\pi_1 f] + \pi_1\langle 0,1 \rangle \mathsf{L}^\top[\pi_1 f]   \\
&=~ \pi_0 \mathsf{L}[f]+ \pi_1 \mathsf{L}[f] \\ \\
0 \mathsf{L}[f] &=~ 0 \langle 0,1 \rangle \mathsf{L}^\top[\pi_1 f ] \\
&=~  \langle 0,0\rangle\mathsf{L}^\top[\pi_1 f ] \\
&=~  \langle 0 \pi_0,0\rangle\mathsf{L}^\top[\pi_1 f ] \tag{$\pi_0$ is additive}\\
&=~ 0 \langle \pi_0,0 \rangle \mathsf{L}^\top[\pi_1 f ] \\
&=~ 0 0 \tag*{\bf [L.2]} \\
&=~ 0
\end{align*}
\noindent  {\bf [L.3]}:  $\mathsf{L}[1]=1$ and $\mathsf{L}[\pi_i]=\pi_i$
\begin{align*}
\mathsf{L}[1] &=~ \langle 0,1 \rangle \mathsf{L}^\top[\pi_1] \\
&=~ \langle 0,1 \rangle \pi_1  \tag*{\bf [L.3]} \\
&=~ 1 \\ \\
\mathsf{L}[\pi_i] &=~ \langle 0,1 \rangle \mathsf{L}^\top[\pi_1\pi_i] \\
&=~ \langle 0,1 \rangle \pi_1\pi_i  \tag*{\bf [L.3]} \\
&=~ \pi_i
\end{align*} 
\noindent  {\bf [L.4]}: $\mathsf{L}\left[ \langle f, g \rangle \right] = \left \langle \mathsf{L}[f], \mathsf{L}[g] \right \rangle$ 
\begin{align*}
\mathsf{L}\left[ \langle f, g \rangle \right] &=~ \langle 0,1 \rangle \mathsf{L}^\top[\pi_1\langle f, g \rangle] \\
&=~ \langle 0,1 \rangle \mathsf{L}^\top[\langle \pi_1f, \pi_1g \rangle] \\
&=~  \langle 0,1 \rangle\left \langle \mathsf{L}^\top[\pi_1f], \mathsf{L}^\top[\pi_1g] \right \rangle \tag*{\bf [L.4]} \\
&=~ \left \langle \langle 0,1 \rangle\mathsf{L}^\top[\pi_1f], \langle 0,1 \rangle\mathsf{L}^\top[\pi_1g] \right \rangle \\
&=~ \left \langle \mathsf{L}[f], \mathsf{L}[g] \right \rangle
\end{align*}
\noindent  {\bf [L.5]}: $\mathsf{L}[fg] = \mathsf{L}[f]~\mathsf{L}\left[(1+0f)g\right]$
\begin{align*}
\mathsf{L}[fg] &=~  \langle 0,1 \rangle \mathsf{L}^\top[\pi_1fg] \\
&=~  \langle 0,1 \rangle \mathsf{L}^\top[\langle \pi_0, \pi_1f \rangle \pi_1 g] \\
&=~ \langle 0,1 \rangle \langle \pi_0, \mathsf{L}^{\top}[\pi_1f] \rangle~ \mathsf{L}^{\top}\left[\left \langle \pi_0, \pi_1 + \langle \pi_0, 0 \rangle \pi_1 f \right \rangle \pi_1 g\right] \tag*{\bf [L.5]} \\ 
&=~ \langle 0,1 \rangle \langle \pi_0, \mathsf{L}^{\top}[\pi_1f] \rangle~ \mathsf{L}^{\top}\left[\left \langle \pi_0, \pi_1 + 0 f \right \rangle \pi_1 g\right] \\
&=~ \langle 0,1 \rangle \langle \pi_0, \mathsf{L}^{\top}[\pi_1f] \rangle~ \mathsf{L}^{\top}\left[ \left(\pi_1 + 0 f \right) g\right] \\
&=~ \left \langle \langle 0,1 \rangle \pi_0, \langle 0,1 \rangle\mathsf{L}^{\top}[\pi_1f] \right \rangle~ \mathsf{L}^{\top}\left[ \left(\pi_1 + 0 f \right) g\right] \\
&=~ \left \langle 0, \langle 0,1 \rangle\mathsf{L}^{\top}[\pi_1f] \right \rangle~ \mathsf{L}^{\top}\left[ \left(\pi_1 + 0 f \right) g\right] \\
&=~ \left \langle 0, \mathsf{L}[f] \right \rangle~ \mathsf{L}^{\top}\left[ \left(\pi_1 + 0 f \right) g\right] \\
&=~ \mathsf{L}[f] \langle 0,1 \rangle ~ \mathsf{L}^{\top}\left[ \left(\pi_1 + 0 f \right) g\right] \\
&=~ \mathsf{L}[f] \langle 0,1 \rangle ~ \mathsf{L}^{\top}\left[ \pi_1 \left(1 + 0 f \right) g\right] \\
&=~ \mathsf{L}[f]~\mathsf{L}\left[(1+0f)g\right]
\end{align*}
\noindent  {\bf [L.6]}: $\mathsf{L}\left[\mathsf{L}[f]\right] = \mathsf{L}[f]$ 
\begin{align*}
\mathsf{L}\left[\mathsf{L}[f]\right] &=~ \langle 0,1 \rangle\mathsf{L}^\top\left[\pi_1 \langle 0,1 \rangle\mathsf{L}^\top[\pi_1f] \right] \\
&=~ \langle 0,1 \rangle\mathsf{L}^\top\left[  \mathsf{L}^\top[\pi_1f] \right] \tag{$\pi_1$ and $\langle 0,1 \rangle$ are inverses} \\ 
&=~ \langle 0,1 \rangle \mathsf{L}^\top[ \pi_1f] \tag*{\bf [L.6]} \\ 
&=~ \mathsf{L}[f]
\end{align*}
So we conclude that $\mathsf{L}$ is a linearizing combinator. For (\ref{lemL1.i}), for every object $C$, we compute: 
\begin{align*}
\mathsf{L}^C[\pi_1f]&=~ \mathsf{L}^C[(0 \times 1) \pi_1f] \\
&=~ (0 \times 1) \mathsf{L}^\top[ \pi_1f] \tag*{\bf [L.8]} \\ 
&=~ \langle 0, \pi_1 \rangle  \mathsf{L}^\top[ \pi_1f] \\
&=~ \pi_1  \langle 0,1 \rangle \mathsf{L}^\top[ \pi_1f] \\
&=~ \pi_1 \mathsf{L}[f] 
\end{align*}
So $\mathsf{L}^C[\pi_1f] = \pi_1 \mathsf{L}[f]$. For (\ref{lemL1.ii}), suppose that $f$ is $\mathsf{L}$-linear, that is, $\mathsf{L}[f] = f$. Then it follows that $\mathsf{L}^C[\pi_1f] = \pi_1 f$ and so $\pi_1 f$ is $\mathsf{L}^C$-linear. For (\ref{lemL1.iii}), suppose again that $f$ is $\mathsf{L}$-linear, and so $\pi_1 f$ is $\mathsf{L}^{C^\prime}$-linear. By Lemma \ref{lstable-lem}.(\ref{lstable-lem.add}), $\pi_1 f$ is also additive (and so reduced) in the simple slice category. Then using Lemma \ref{linprop1} with respect to simple slice composition, we have that:
\begin{align*}
\mathsf{L}^C[(h \times f)g] &=~ \mathsf{L}^C[(h \times 1)(1 \times f)g] \\
&=~ (h \times 1) \mathsf{L}^{C^\prime}[(1 \times f)g] \tag*{\bf [L.8]} \\ 
&=~ (h \times 1) \mathsf{L}^{C^\prime}[\langle \pi_0, \pi_1f \rangle g] \\
&=~ (h \times 1) \langle \pi_0, \mathsf{L}^{C^\prime}[\pi_1 f] \rangle \mathsf{L}^{C^\prime}[g] \tag{Lemma \ref{linprop1}.(\ref{linprop1.ii})}\\
&=~ (h \times 1) \langle \pi_0,\pi_1 f \rangle\mathsf{L}^{C^\prime}[g] \tag{$f$ is $\mathsf{L}$-linear, so $\pi_1 f$ is $\mathsf{L}^{C^\prime}$-linear}\\
&=~ (h \times 1)(1 \times f) \mathsf{L}^{C^\prime}[g] \\
&=~ (h \times f) \mathsf{L}^{C^\prime}[g]
\end{align*}
So we have that $ (h \times f) \mathsf{L}^{C^\prime}[g]=\mathsf{L}^C[(h \times f)g]$, when $f$ is $\mathsf{L}$-linear. Lastly (\ref{lemL1.iv}) is a special case of \textbf{[L.7]} when $C = \top$. First observe that $\beta = (1 \times \tau) \alpha$, where $\alpha$ and $\beta$ are defined as in Definition \ref{syslindef}.\textbf{[L.7]}. So we compute: 
\begin{align*}
\mathsf{L}_0[\mathsf{L}_1[f]] &=~ \tau \mathsf{L}^B[\tau \mathsf{L}^A[ f ] ] \\
&=~ \tau \mathsf{L}^B[\tau \mathsf{L}^A[ \langle 0,1 \times 1 \rangle \pi_1 f ] ] \\
&=~ \tau \mathsf{L}^B[\tau \mathsf{L}^A[ (\langle 0,1 \rangle \times 1) \alpha^{-1} \pi_1 f ] ] \\
&=~ \tau \mathsf{L}^B[\tau  (\langle 0,1 \rangle \times 1) \mathsf{L}^{\top \times A}[ \alpha^{-1} \pi_1 f ] ] \tag*{\textbf{[L.8]}} \\
&=~ \tau \mathsf{L}^B[(\langle 0,1 \rangle \times 1)\beta^{-1} \alpha \mathsf{L}^{\top \times A}[ \alpha^{-1} \pi_1 f ] ] \\
&=~ \tau (\langle 0,1 \rangle \times 1) \mathsf{L}^{\top \times B}[\beta^{-1} \alpha \mathsf{L}^{\top \times A}[ \alpha^{-1} \pi_1 f ] ]  \tag*{\textbf{[L.8]}} \\
&=~ (\langle 0,1 \rangle \times 1) \alpha^{-1} \beta \mathsf{L}^{\top \times B}[\beta^{-1} \alpha \mathsf{L}^{\top \times A}[ \alpha^{-1} \pi_1 f ] ]  \\
&=~ (\langle 0,1 \rangle \times 1)  \alpha^{-1} \mathsf{L}^\top_0 [ \mathsf{L}^\top_1[ \pi_1 f]] \\ 
&=~  (\langle 0,1 \rangle \times 1)  \alpha^{-1} \mathsf{L}^\top_1 [ \mathsf{L}^\top_0[ \pi_1 f]] \tag*{\textbf{[L.7]}} \\
&=~  (\langle 0,1 \rangle \times 1)  \alpha^{-1} \alpha \mathsf{L}^{\top \times A} [\alpha^{-1} \beta \mathsf{L}^{\top \times B}[\beta^{-1} \pi_1 f]] \\
&=~  (\langle 0,1 \rangle \times 1) \mathsf{L}^{\top \times A} [\alpha^{-1} \beta \mathsf{L}^{\top \times B}[\beta^{-1} \pi_1 f]] \\
&=~  \mathsf{L}^{A} [(\langle 0,1 \rangle \times 1) \alpha^{-1} \beta \mathsf{L}^{\top \times B}[\beta^{-1} \pi_1 f]]   \tag*{\textbf{[L.8]}} \\
&=~  \mathsf{L}^{A} [\tau (\langle 0,1 \rangle \times 1) \mathsf{L}^{\top \times B}[\beta^{-1} \pi_1 f]] \\
&=~  \mathsf{L}^{A} [\tau  \mathsf{L}^{B}[(\langle 0,1 \rangle \times 1) \beta^{-1} \pi_1 f]]  \tag*{\textbf{[L.8]}} \\
&=~  \mathsf{L}^{A} [\tau  \mathsf{L}^{B}[\tau \langle 0, 1 \times 1 \rangle \pi_1 f]]  \\
&=~  \mathsf{L}^{A} [\tau  \mathsf{L}^{B}[\tau \pi_1 f]]  \\
&=~ \mathsf{L}_1[\mathsf{L}_0[f]] 
\end{align*}
So we conclude that $\mathsf{L}_0[\mathsf{L}_1[f]] =  \mathsf{L}_1[\mathsf{L}_0[f]]$. 
\end{proof}

The following lemma will be useful in the proofs of Proposition \ref{L7Prop} and Proposition \ref{LDprop}: 

\begin{lemma}\label{linclem1} In a Cartesian left additive category with a system of linearizing combinators $\mathsf{L}^{C}$,
\begin{enumerate}[{\em (i)}]
\item \label{linclem1.i} For every map $h: C \to C^\prime$, $\mathsf{L}^C[\pi_0h] = 0$;
\item \label{linclem1.ii} For every map $f: (C \times A) \times (B \times D) \to E$, $\ell~ \mathsf{L}^{C \times A}[f] = \mathsf{L}^C[\ell f]$;
\item  \label{linclem1.iii} For every map $f: C \times A \to B$, $\oplus_{C\times A}\mathsf{L}^C[f] = c~ \mathsf{L}^{C \times C}\left[ (\oplus_C \times \oplus_A)f \right]$
\end{enumerate}
\end{lemma}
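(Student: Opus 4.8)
The plan is to reduce all three parts to facts already established for the induced base-category linearizing combinator $\mathsf{L}$ of Proposition \ref{lemL1} and its compatibility with the system $\{\mathsf{L}^C\}$ (namely \textbf{[L.8]} and Proposition \ref{lemL1}.(\ref{lemL1.iii})), together with the closure properties of $\mathsf{L}$-linear maps. For part (i), I would first observe that $\pi_0 h : C \times A \to C'$ is constant in the simple slice $\mathbb{X}[C]$, since $\langle \pi_0, 0 \rangle \pi_0 h = \pi_0 h$; equivalently, it is constant in its second argument by Lemma \ref{addlem2}.(\ref{addlem2.i}). Since $\mathsf{L}^C$ is a linearizing combinator on the Cartesian left additive category $\mathbb{X}[C]$, Lemma \ref{linprop1}.(\ref{linprop1.i}) applied in that context immediately gives $\mathsf{L}^C[\pi_0 h] = 0$.

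For part (ii), the key is the decomposition $\ell_{C,A,B,D} = \langle 1,0 \rangle \times \langle 0,1 \rangle$ from (\ref{ldef}), where $\langle 1,0 \rangle : C \to C \times A$ is a context substitution and $\langle 0,1 \rangle : D \to B \times D$ is $\mathsf{L}$-linear, being the pairing of a zero map with an identity (Lemma \ref{lstable-lem}.(\ref{lstable-lem.0}), (\ref{lstable-lem.1}), (\ref{lstable-lem.pair})). Applying Proposition \ref{lemL1}.(\ref{lemL1.iii}) with $h = \langle 1,0 \rangle$ and the $\mathsf{L}$-linear map $\langle 0,1 \rangle$ then yields $(\langle 1,0 \rangle \times \langle 0,1 \rangle)\, \mathsf{L}^{C \times A}[f] = \mathsf{L}^C[(\langle 1,0 \rangle \times \langle 0,1 \rangle)\, f]$, which is precisely $\ell\, \mathsf{L}^{C \times A}[f] = \mathsf{L}^C[\ell f]$.

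For part (iii), I would use that $\oplus_A = \pi_0 + \pi_1$ is $\mathsf{L}$-linear, being a sum of projections (Lemma \ref{lstable-lem}.(\ref{lstable-lem.pi}), (\ref{lstable-lem.sum})). Proposition \ref{lemL1}.(\ref{lemL1.iii}) with $h = \oplus_C$ and the $\mathsf{L}$-linear map $\oplus_A$ gives $(\oplus_C \times \oplus_A)\, \mathsf{L}^C[f] = \mathsf{L}^{C \times C}[(\oplus_C \times \oplus_A)\, f]$; precomposing both sides with the interchange isomorphism $c$ of (\ref{cdef}) and using $\oplus_{C \times A} = c(\oplus_C \times \oplus_A)$ from Lemma \ref{opluslem0}.(\ref{opluslem2}) produces $\oplus_{C \times A}\, \mathsf{L}^C[f] = c\, \mathsf{L}^{C \times C}[(\oplus_C \times \oplus_A)\, f]$. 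Since each item collapses to an instance of an already-proven statement, there is no genuine obstacle; the only point requiring care is the type-bookkeeping that lets one read $\ell$ and $\oplus_{C \times A}$ as a context substitution paired (in the sense of Proposition \ref{lemL1}.(\ref{lemL1.iii})) with an $\mathsf{L}$-linear map. If one preferred a proof not routing through Proposition \ref{lemL1}, parts (ii) and (iii) could instead be derived directly inside the simple slices from \textbf{[L.8]}, \textbf{[L.5]}, and Lemma \ref{linprop1}, mirroring the computation in the proof of Proposition \ref{lemL1}.(\ref{lemL1.iii}); that is the only place a marginally longer calculation would appear.
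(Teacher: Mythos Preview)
Your proposal is correct and follows essentially the same route as the paper's own proof: for (i) you invoke Lemma~\ref{addlem2}.(\ref{addlem2.i}) and Lemma~\ref{linprop1}.(\ref{linprop1.i}) in the slice, and for (ii) and (iii) you reduce to Proposition~\ref{lemL1}.(\ref{lemL1.iii}) after recognizing $\langle 0,1\rangle$ and $\oplus_A$ as $\mathsf{L}$-linear, exactly as the paper does. Your extra remarks (the explicit verification that $\langle\pi_0,0\rangle\pi_0 h = \pi_0 h$, and the alternative direct route avoiding Proposition~\ref{lemL1}) are harmless elaborations.
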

\begin{proof} For (\ref{linclem1.i}), first note that by Lemma \ref{addlem2}.(\ref{addlem2.i}), $\pi_0 h$ is constant in the simple slice category. Then since $\mathsf{L}^C$ is a linearizing combinator, by Lemma \ref{linprop1}.(\ref{linprop1.i}), it follows that $\mathsf{L}^C[\pi_0h] = 0$. For (\ref{linclem1.ii}), recall that $\ell = \langle 1,0 \rangle \times \langle 0,1 \rangle$. By Lemma \ref{lstable-lem}.(\ref{lstable-lem.1}), (\ref{lstable-lem.0}) and (\ref{lstable-lem.pair}), $\langle 0,1 \rangle$ is $\mathsf{L}$-linear, and so (\ref{linclem1.ii}) is simply an application of Proposition \ref{lemL1}.(\ref{lemL1.iii}). For (\ref{linclem1.iii}), recall that $\oplus_A = \pi_0 + \pi_1$ and so by Lemma  \ref{lstable-lem}, $\oplus_A$ is $\mathsf{L}$-linear. Note that by Lemma \ref{opluslem0}.(\ref{opluslem2}) that $\oplus_{C\times A} = c (\oplus_C \times \oplus_A)$, and so (\ref{linclem1.iii}) is simply an application of Proposition \ref{lemL1}.(\ref{lemL1.iii}). 
\end{proof} 

As previously discussed, it may be tempting to assume that from a linearizing combinator $\mathsf{L}$ on the base category, one should be able to define the linearizing combinator in context $\mathsf{L}^C$ by doing the same evaluate at zero trick as for differential combinators. This however does not work. Instead, in order to prove the converse of Proposition \ref{lemL1}, we will require the extra assumption that our Cartesian left additive category be Cartesian closed, which we discuss in Section \ref{closed-sec}. 

Our next observation is that \textbf{[L.7]} can equivalently be stated in a more compact way as {\bf [L.{7}.a]} below, using the canonical interchange isomorphism. This equivalent version will be more useful in the proofs of Proposition \ref{DLprop2} and Proposition \ref{LDprop}, while on the other hand, \textbf{[L.7]} is somewhat more intuitive and will be more useful in Section \ref{closed-sec}. The proof that {\bf [L.{7}]} and {\bf [L.{7}.a]} are equivalent include probably the ``nastiest'' calculations in this paper. 

\begin{proposition}\label{L7Prop} In the presence of the other axioms \textbf{[L.1]}-\textbf{[L.6]} and \textbf{[L.8]}, \textbf{[L.7]} is equivalent to the following: 
\begin{enumerate}[{\bf [L.{7}.a]}]
\item For a map $f: (C \times A) \times (B \times D) \to E$, $c~ \mathsf{L}^{C\times B}\left[c~ \mathsf{L}^{C\times A}[f] \right]   = \mathsf{L}^{C\times A}\left[c~ \mathsf{L}^{C\times B}[c~ f] \right]$ 
\end{enumerate}
where recall that $c$ is the canonical natural interchange isomorphism as defined in (\ref{cdef}).
\end{proposition}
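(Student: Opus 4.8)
The plan is to prove both implications by transporting the structural isomorphisms $\alpha$, $\beta$, $c$, $\tau$ and the associativity isomorphisms past the linearizing combinators $\mathsf{L}^{(-)}$. This is legitimate because all of these maps are $\mathsf{L}$-linear (Lemma \ref{lstable-lem}) and because of the two ``transport'' principles already at hand: \textbf{[L.8]} lets a map acting only on the context be pulled through $\mathsf{L}^{(-)}$, while Proposition \ref{lemL1}.(\ref{lemL1.iii}) lets a map of product shape whose argument component is $\mathsf{L}$-linear be pulled through. The only genuinely subtle point is that a structural map may be transported through $\mathsf{L}^{(-)}$ only when it respects the context/argument split (or after first reassociating so that it does), so each direction of the proof is essentially a careful normalisation of the two sides into a common form.

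For \textbf{[L.7.a]} $\Rightarrow$ \textbf{[L.7]}, I would take the instance of \textbf{[L.7.a]} in which the fourth object is the terminal object $\top$ (so $B \times D \cong B$ and $A \times D \cong A$), applied to the $\alpha^{-1}$- and $\beta^{-1}$-twisted versions of the given $f \colon C \times (A \times B) \to D$. Using $\beta = (1 \times \tau)\alpha$ together with the identification of the relevant instances $c_{C,B,A,\top}$, $c_{C,A,B,\top}$ with composites of $\alpha^{\pm 1}$, $\beta^{\pm 1}$ and terminal projections, one conjugates the displayed equation of \textbf{[L.7.a]} by $\alpha$ and $\beta$, discharging each structural map through $\mathsf{L}^{C \times A}$ or $\mathsf{L}^{C \times B}$ via \textbf{[L.8]} and Proposition \ref{lemL1}.(\ref{lemL1.iii}); what remains is exactly $\mathsf{L}^C_1[\mathsf{L}^C_0[f]] = \mathsf{L}^C_0[\mathsf{L}^C_1[f]]$. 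This direction is mechanical once the dictionary between the nested-product and interchange packagings is set up.

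The harder direction is \textbf{[L.7]} $\Rightarrow$ \textbf{[L.7.a]}, and the obstacle is that in \textbf{[L.7.a]} each of $\mathsf{L}^{C \times A}[-]$ and $\mathsf{L}^{C \times B}[-]$ is a \emph{joint} (total) linearization of a two-factor argument, whereas \textbf{[L.7]} only speaks about linearizing one factor at a time — and, worse, the fourth object $D$ sits inside both joint steps. I would resolve this by decomposing the joint linearizations: since $\mathsf{L}^{C \times A}[f]$ is additive in its second argument (\textbf{[L.2]}) and $\langle \pi_0, \pi_1 \rangle = \langle \pi_0, 0 \rangle + \langle 0, \pi_1 \rangle$, one gets $\mathsf{L}^{C \times A}[f] = \mathsf{L}^{C \times A}[(1 \times \langle \pi_0, 0 \rangle)f] + \mathsf{L}^{C \times A}[(1 \times \langle 0, \pi_1 \rangle)f]$, where the splitting is pulled inside $\mathsf{L}^{C \times A}$ by Proposition \ref{lemL1}.(\ref{lemL1.iii}) (as $\langle \pi_0, 0 \rangle$ and $\langle 0, \pi_1 \rangle$ are $\mathsf{L}$-linear), and each summand is a single-factor linearization (in $B$, respectively in $D$, with the other held in context) up to the evident inclusion/projection between $(C \times A) \times B$, $(C \times A) \times D$ and $(C \times A) \times (B \times D)$. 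Expanding both sides of \textbf{[L.7.a]} in this way turns each side into a sum of composites of single-factor linearizations; \textbf{[L.7]}, instantiated with the appropriate objects and maps, matches the summands pairwise, and the terms in which $D$ is touched twice are reconciled using idempotence \textbf{[L.6]} together with the fact that $\mathsf{L}^{(-)}[f]$ is already linear and hence fixed by a further linearization in any sub-context. Reassembling via \textbf{[L.1]} yields \textbf{[L.7.a]}.

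As the paper itself warns, the main obstacle is purely organisational: there are many structural isomorphisms to track, and at each step one must verify that the map being moved acts on the context (so \textbf{[L.8]} applies) or on the argument and is $\mathsf{L}$-linear (so Proposition \ref{lemL1}.(\ref{lemL1.iii}) applies); getting the associativity and interchange coherences right — in particular the identity relating $\alpha$, $\beta$ and the several instances of $c$, and the bookkeeping for the shared object $D$ in the \textbf{[L.7]} $\Rightarrow$ \textbf{[L.7.a]} direction — is where essentially all of the work lies.
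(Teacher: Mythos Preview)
Your proposal is correct and follows essentially the same route as the paper: for \textbf{[L.7.a]} $\Rightarrow$ \textbf{[L.7]} the paper also pads with the terminal object (defining $f^\circ : (C \times A) \times (B \times \top) \to D$) and conjugates by $\alpha$, $\beta$ via the transport principles you identify; for \textbf{[L.7]} $\Rightarrow$ \textbf{[L.7.a]} the paper likewise splits each joint linearization using additivity in the second argument (writing $1 = ((1\times 1)\times(1\times 0)) + ((1\times 1)\times(0\times 1))$), obtains four terms of which two vanish by \textbf{[L.2]}, and reduces each side to a common $(\pi_0 \times \pi_1)\mathsf{L}^C[\ell f]$ summand (where \textbf{[L.6]} is used) plus an $\mathsf{L}^C_0\mathsf{L}^C_1$-type summand that \textbf{[L.7]} swaps. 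One small caution: your remark that $\mathsf{L}^{(-)}[f]$ is ``fixed by a further linearization in any sub-context'' is stronger than what is true or needed --- the paper only uses the genuine idempotence $\mathsf{L}^C[\mathsf{L}^C[-]] = \mathsf{L}^C[-]$ at the same context $C$.
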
 
\begin{proof} Suppose that \textbf{[L.7]} holds. Then for any $f: (C \times A) \times (B \times D) \to E$, we compute that: 
\begin{align*}
&c~ \mathsf{L}^{C\times B}\left[c~ \mathsf{L}^{C\times A}[f] \right] =~ \\
&=~ c\left( (1 \times 1) \times (1 \times 1) \right) \mathsf{L}^{C\times B}\left[c~ \mathsf{L}^{C\times A}[f] \right] \\
&=~ c\left( \left( (1 \times 1) \times (1 \times 0) \right) + \left( (1 \times 1) \times (0 \times 1) \right) \right) \mathsf{L}^{C\times B}\left[c~ \mathsf{L}^{C\times A}[f] \right] \\
&=~c \left( \left( (1 \times 1) \times (1 \times 0) \right)  \mathsf{L}^{C\times B}\left[c~ \mathsf{L}^{C\times A}[f] \right] +  \left( (1 \times 1) \times (0 \times 1) \right)  \mathsf{L}^{C\times B}\left[c~ \mathsf{L}^{C\times A}[f] \right] \right) \tag*{\textbf{[L.2]}} \\
&=~c\left( (1 \times 1) \times (1 \times 0) \right)  \mathsf{L}^{C\times B}\left[c~ \mathsf{L}^{C\times A}[f] \right] +  c \left( (1 \times 1) \times (0 \times 1) \right)  \mathsf{L}^{C\times B}\left[c~ \mathsf{L}^{C\times A}[f] \right] \\
&=~ c\left( (1 \times 1) \times (1 \times 0) \right)  \mathsf{L}^{C\times B}\left[c\left( (1 \times 1) \times (1 \times 1) \right) \mathsf{L}^{C\times A}[f] \right] \\
&+~ c\left( (1 \times 1) \times (0 \times 1) \right)  \mathsf{L}^{C\times B}\left[c\left( (1 \times 1) \times (1 \times 1) \right) \mathsf{L}^{C\times A}[f] \right] \\
&=~ c\left( (1 \times 1) \times (1 \times 0) \right)  \mathsf{L}^{C\times B}\left[c\left( \left( (1 \times 1) \times (1 \times 0) \right) + \left( (1 \times 1) \times (0 \times 1) \right) \right) \mathsf{L}^{C\times A}[f] \right] \\
&+~ c\left( (1 \times 1) \times (0 \times 1) \right)  \mathsf{L}^{C\times B}\left[c\left( \left( (1 \times 1) \times (1 \times 0) \right) + \left( (1 \times 1) \times (0 \times 1) \right) \right) \mathsf{L}^{C\times A}[f] \right] \\
&=~ c\left( (1 \times 1) \times (1 \times 0) \right)  \mathsf{L}^{C\times B}\left[c \left( \left( (1 \times 1) \times (1 \times 0) \right) \mathsf{L}^{C\times A}[f] +  \left( (1 \times 1) \times (0 \times 1) \right) \mathsf{L}^{C\times A}[f]  \right) \right] \\
&+~ c \left( (1 \times 1) \times (0 \times 1) \right)  \mathsf{L}^{C\times B}\!\!\left[c \left( \left( (1 \times 1) \times (1 \times 0) \right) \mathsf{L}^{C\times A}[f] +  \left( (1 \times 1) \times (0 \times 1) \right) \mathsf{L}^{C\times A}[f]  \right) \right] \\
&=~ c\left( (1 \times 1) \times (1 \times 0) \right)  \mathsf{L}^{C\times B}\left[c \left( (1 \times 1) \times (1 \times 0) \right) \mathsf{L}^{C\times A}[f] +  c\left( (1 \times 1) \times (0 \times 1) \right) \mathsf{L}^{C\times A}[f] \right] \\
&+~ c \left( (1 \times 1) \times (0 \times 1) \right)  \mathsf{L}^{C\times B}\left[c  \left( (1 \times 1) \times (1 \times 0) \right) \mathsf{L}^{C\times A}[f] + c \left( (1 \times 1) \times (0 \times 1) \right) \mathsf{L}^{C\times A}[f]  \right] \\
&=~ c\left( (1 \times 1) \times (1 \times 0) \right) \!\!\left(\!\!\mathsf{L}^{C\times B}\!\!\left[c \left( (1 \times 1) \!\times\! (1 \times 0) \right) \mathsf{L}^{C\times A}[f] \right]  + \mathsf{L}^{C\times B}\!\!\left[c\left( (1 \times 1) \!\times\! (0 \times 1) \right) \mathsf{L}^{C\times A}[f] \right] \!\!\right)  \tag*{\textbf{[L.2]}} \\
&+~  c\left( (1 \times 1) \times (0 \times 1) \right)\!\!\left(\!\!\mathsf{L}^{C\times B}\!\!\left[c \left( (1 \times 1) \!\times\! (1 \times 0) \right) \mathsf{L}^{C\times A}[f] \right]  + \mathsf{L}^{C\times B}\!\!\left[c\left( (1 \times 1) \!\times\! (0 \times 1) \right) \mathsf{L}^{C\times A}[f] \right] \!\!\right)  \tag*{\textbf{[L.2]}} \\
&=~  c\left( (1 \times 1) \times (1 \times 0) \right) \mathsf{L}^{C\times B}\left[c \left( (1 \times 1) \times (1 \times 0) \right) \mathsf{L}^{C\times A}[f] \right] \\
&+~ c\left( (1 \times 1) \times (1 \times 0) \right) \mathsf{L}^{C\times B}\left[c\left( (1 \times 1) \times (0 \times 1) \right) \mathsf{L}^{C\times A}[f] \right] \\
&+~ c\left( (1 \times 1) \times (0 \times 1) \right) \mathsf{L}^{C\times B}\left[c \left( (1 \times 1) \times (1 \times 0) \right) \mathsf{L}^{C\times A}[f] \right] \\
&+~  c\left( (1 \times 1) \times (0 \times 1) \right) \mathsf{L}^{C\times B}\left[c\left( (1 \times 1) \times (0 \times 1) \right) \mathsf{L}^{C\times A}[f] \right] \\
&=~  c\left( (1 \times 1) \times (1 \times 0) \right) \mathsf{L}^{C\times B}\left[c~  \mathsf{L}^{C\times A}[\left( (1 \times 1) \times (1 \times 0) \right) f] \right]  \tag{Lem.\ref{lstable-lem}.(\ref{lstable-lem.1})+(\ref{lstable-lem.0})+(\ref{lstable-lem.prod}) + Prop.\ref{lemL1}.(\ref{lemL1.iii})} \\
&+~ c~\mathsf{L}^{C\times B}\left[\left( (1 \times 1) \times (1 \times 0) \right) c\left( (1 \times 1) \times (0 \times 1) \right) \mathsf{L}^{C\times A}[f] \right] \tag{Lem.\ref{lstable-lem}.(\ref{lstable-lem.1})+(\ref{lstable-lem.0})+(\ref{lstable-lem.prod}) + Prop.\ref{lemL1}.(\ref{lemL1.iii})} \\
&+~ c\left( (1 \times 1) \times (0 \times 1) \right) \mathsf{L}^{C\times B}\left[\left( (1 \times 1) \times (1 \times 0) \right) c~  \mathsf{L}^{C\times A}[f] \right] \tag{Nat. of $c$} \\
&+~  c~\mathsf{L}^{C\times B}\left[\left( (1 \times 1) \times (0 \times 1) \right) c\left( (1 \times 1) \times (0 \times 1) \right) \mathsf{L}^{C\times A}[f] \right]  \tag{Lem.\ref{lstable-lem}.(\ref{lstable-lem.1})+(\ref{lstable-lem.0})+(\ref{lstable-lem.prod}) + Prop.\ref{lemL1}.(\ref{lemL1.iii})} \\
&=~  c~\mathsf{L}^{C\times B}\left[\left( (1 \times 1) \times (1 \times 0) \right) c~  \mathsf{L}^{C\times A}[\left( (1 \times 1) \times (1 \times 0) \right) f] \right]  \tag{Lem.\ref{lstable-lem}.(\ref{lstable-lem.1})+(\ref{lstable-lem.0})+(\ref{lstable-lem.prod}) + Prop.\ref{lemL1}.(\ref{lemL1.iii})} \\
&+~ c~\mathsf{L}^{C\times B}\left[c \left( (1 \times 1) \times (1 \times 0) \right) \left( (1 \times 1) \times (0 \times 1) \right) \mathsf{L}^{C\times A}[f] \right] \tag{Nat. of $c$} \\
&+~ c\left( (1 \times 1) \times (0 \times 1) \right)\left( (1 \times 1) \times (1 \times 0) \right)  \mathsf{L}^{C\times B}\left[c~  \mathsf{L}^{C\times A}[f] \right] \tag{Lem.\ref{lstable-lem}.(\ref{lstable-lem.1})+(\ref{lstable-lem.0})+(\ref{lstable-lem.prod}) + Prop.\ref{lemL1}.(\ref{lemL1.iii})} \\
&+~  c~\mathsf{L}^{C\times B}\left[ c \left( (1 \times 0) \times (1 \times 1) \right) \left( (1 \times 1) \times (0 \times 1) \right) \mathsf{L}^{C\times A}[f] \right]  \tag{Nat. of $c$} \\
&=~  c~\mathsf{L}^{C\times B}\left[c \left( (1 \times 1) \times (1 \times 0) \right)   \mathsf{L}^{C\times A}[\left( (1 \times 1) \times (1 \times 0) \right) f] \right]  \tag{Nat. of $c$} \\
&+~ c~\mathsf{L}^{C\times B}\left[c \left( (1 \times 1) \times (0 \times 0) \right) \mathsf{L}^{C\times A}[f] \right] \\
&+~ c\left( (1 \times 1) \times (0 \times 0) \right) \mathsf{L}^{C\times B}\left[c~  \mathsf{L}^{C\times A}[f] \right] \\
&+~  c~\mathsf{L}^{C\times B}\left[ c \left( (1 \times 0) \times (0 \times 1) \right) \mathsf{L}^{C\times A}[f] \right]  \\
&=~  c~\mathsf{L}^{C\times B}\left[c~ \mathsf{L}^{C\times A}[ \left( (1 \times 1) \times (1 \times 0) \right)  \left( (1 \times 1) \times (1 \times 0) \right) f] \right]  \tag{Lem.\ref{lstable-lem}.(\ref{lstable-lem.1})+(\ref{lstable-lem.0})+(\ref{lstable-lem.prod}) + Prop.\ref{lemL1}.(\ref{lemL1.iii})} \\
&+~ c~\mathsf{L}^{C\times B}\left[c 0 \right] \tag*{\textbf{[L.2]}} \\ 
&+~ c 0  \tag*{\textbf{[L.2]}} \\ 
&+~  c~\mathsf{L}^{C\times B}\left[ (\pi_0 \times \pi_1) \ell \mathsf{L}^{C\times A}[f] \right]  \\ 
&=~  c~\mathsf{L}^{C\times B}\left[c~ \mathsf{L}^{C\times A}[  \left( (1 \times 1) \times (1 \times 0) \right) f] \right] + c~\mathsf{L}^{C\times B}\left[0 \right] + 0 +  c~ (\pi_0 \times \pi_1)\mathsf{L}^{C}\left[ \mathsf{L}^{C}[\ell f] \right]  \tag{Lem.\ref{lstable-lem}.(\ref{lstable-lem.pi}) + Prop.\ref{lemL1}.(\ref{lemL1.iii}) + Lem.\ref{linclem1}.(\ref{linclem1.iii})} \\
&=~  c~\mathsf{L}^{C\times B}\left[c~ \mathsf{L}^{C\times A}[ \left( (1 \times 1) \times \pi_0 \right) \left( (1 \times 1) \times \langle 1, 0 \rangle \right)  f] \right] + c~0 +  c~ (\pi_0 \times \pi_1) \mathsf{L}^{C}\left[ \mathsf{L}^{C}[\ell f] \right] \tag*{\textbf{[L.1]} } \\
&=~  c~\mathsf{L}^{C\times B}\left[c~ \mathsf{L}^{C\times A}[ \left( (1 \times 1) \times \pi_0 \right) \left( (1 \times 1) \times \langle 1, 0 \rangle \right)  f] \right] + 0 +  (\pi_0 \times \pi_1) \mathsf{L}^{C}\left[ \mathsf{L}^{C}[\ell f] \right] \tag{$c (\pi_0 \times \pi_1) = \pi_0 \times \pi_1$}  \\
&=~  c~\mathsf{L}^{C\times B}\left[c \left( (1 \times 1) \times \pi_0 \right)  \mathsf{L}^{C\times A}[ \left( (1 \times 1) \times \langle 1, 0 \rangle \right)  f] \right]  +  (\pi_0 \times \pi_1)\mathsf{L}^{C}\left[ \mathsf{L}^{C}[\ell f] \right]  \tag{Lem.\ref{lstable-lem}.(\ref{lstable-lem.1})+(\ref{lstable-lem.pi})+(\ref{lstable-lem.prod}) + Prop.\ref{lemL1}.(\ref{lemL1.iii})} \\
&=~  c~\mathsf{L}^{C\times B}\left[c \left( (1 \times 1) \times \pi_0 \right)  \mathsf{L}^{C\times A}[ \left( (1 \times 1) \times \langle 1, 0 \rangle \right)  f] \right] + (\pi_0 \times \pi_1) \mathsf{L}^{C}[\ell f]  \tag*{\textbf{[L.6]}} \\
&=~ c~\mathsf{L}^{C\times B}\left[\left( (1 \times 1) \times \pi_0 \right) \beta^{-1} \alpha \mathsf{L}^{C\times A}[ \left( (1 \times 1) \times \langle 1, 0 \rangle \right)  f] \right] +  (\pi_0 \times \pi_1)\mathsf{L}^{C}[\ell f] \\
&=~ c\left( (1 \times 1) \times \pi_0 \right) \mathsf{L}^{C\times B}\left[\beta^{-1} \alpha \mathsf{L}^{C\times A}[ \left( (1 \times 1) \times \langle 1, 0 \rangle \right)  f] \right] + (\pi_0 \times \pi_1) \mathsf{L}^{C}[\ell f] \tag{Lem.\ref{lstable-lem}.(\ref{lstable-lem.pi}) + Prop.\ref{lemL1}.(\ref{lemL1.iii})} \\
&=~ \left( (1 \times 1) \times \pi_0 \right) \alpha^{-1} \beta \mathsf{L}^{C\times B}\left[\beta^{-1} \alpha \mathsf{L}^{C\times A}[ \left( (1 \times 1) \times \langle 1, 0 \rangle \right)  f] \right] + (\pi_0 \times \pi_1) \mathsf{L}^{C}[\ell f]  \\
&=~ \left( (1 \times 1) \times \pi_0 \right) \alpha^{-1} \beta \mathsf{L}^{C\times B}\left[\beta^{-1} \alpha \mathsf{L}^{C\times A}[\alpha^{-1} \alpha \left( (1 \times 1) \times \langle 1, 0 \rangle \right)  f] \right] + (\pi_0 \times \pi_1) \mathsf{L}^{C}[\ell f]  \\
&=~ \left( (1 \times 1) \times \pi_0 \right) \alpha^{-1} \mathsf{L}^{C}_0\left[ \mathsf{L}^{C}_1[\alpha \left( (1 \times 1) \times \langle 1, 0 \rangle \right)  f] \right] + (\pi_0 \times \pi_1) \mathsf{L}^{C}[\ell f]  
\end{align*}
So we have the following equality: 
\begin{equation}\label{L74}\begin{gathered}  
c~ \mathsf{L}^{C\times B}\left[c~ \mathsf{L}^{C\times A}[f] \right] = \left( (1 \times 1) \times \pi_0 \right) \alpha^{-1} \mathsf{L}^{C}_0\left[ \mathsf{L}^{C}_1[\alpha \left( (1 \times 1) \times \langle 1, 0 \rangle \right)  f] \right] +  (\pi_0 \times \pi_1)\mathsf{L}^{C}[\ell f] 
 \end{gathered}\end{equation}
 On the other hand, using the above equality and that $\ell c = \ell$, we compute that: 
 \begin{align*}
&\mathsf{L}^{C\times A}\left[c~ \mathsf{L}^{C\times B}[c~ f] \right] =~ cc~ \mathsf{L}^{C\times A}\left[c~ \mathsf{L}^{C\times B}[c~ f] \right] \tag{$c$ is self-inverse} \\
&=~ c \left( \left( (1 \times 1) \times \pi_0 \right) \alpha^{-1} \mathsf{L}^{C}_0\left[ \mathsf{L}^{C}_1[\alpha \left( (1 \times 1) \times \langle 1, 0 \rangle \right)  c~ f] \right] +   (\pi_0 \times \pi_1)\mathsf{L}^{C}[\ell c~ f] \right) \tag{(\ref{L74}) for $cf$}\\
&=~ c \left( \left( (1 \times 1) \times \pi_0 \right) \alpha^{-1} \mathsf{L}^{C}_0\left[ \mathsf{L}^{C}_1[\alpha \left( (1 \times 1) \times \langle 1, 0 \rangle \right)  c~ f] \right] +   (\pi_0 \times \pi_1)\mathsf{L}^{C}[\ell f] \right) \tag{$\ell c = \ell$} \\ 
&=~ c \left( (1 \times 1) \times \pi_0 \right) \alpha^{-1} \mathsf{L}^{C}_0\left[ \mathsf{L}^{C}_1[\alpha \left( (1 \times 1) \times \langle 1, 0 \rangle \right)  c~ f] \right] + c (\pi_0 \times \pi_1) \mathsf{L}^{C}[\ell f]  \\
&=~ c \left( (1 \times 1) \times \pi_0 \right) \alpha^{-1} \mathsf{L}^{C}_0\left[ \mathsf{L}^{C}_1[\alpha \left( (1 \times 1) \times \langle 1, 0 \rangle \right)  c~ f] \right] + (\pi_0 \times \pi_1) \mathsf{L}^{C}[\ell f] \tag{$c (\pi_0 \times \pi_1) = \pi_0 \times \pi_1$}  \\
&=~ \left( (1 \times 1) \times \pi_0 \right) \beta^{-1} \mathsf{L}^{C}_0\left[ \mathsf{L}^{C}_1[\beta \left( (1 \times 1) \times \langle 1, 0 \rangle \right) f] \right] +  (\pi_0 \times \pi_1)\mathsf{L}^{C}[\ell f]  \\
&=~ \left( (1 \times 1) \times \pi_0 \right) \beta^{-1} \mathsf{L}^{C}_1\left[ \mathsf{L}^{C}_0[\beta \left( (1 \times 1) \times \langle 1, 0 \rangle \right) f] \right] +  (\pi_0 \times \pi_1)\mathsf{L}^{C}[\ell f]  \tag*{\textbf{[L.7]}} \\ 
&=~ \left( (1 \times 1) \times \pi_0 \right) \beta^{-1} \alpha \mathsf{L}^{C \times B}\left[\alpha^{-1} \beta \mathsf{L}^{C \times A}[\beta^{-1}\beta \left( (1 \times 1) \times \langle 1, 0 \rangle \right) f] \right] +  (\pi_0 \times \pi_1)\mathsf{L}^{C}[\ell f] \\ 
&=~ \left( (1 \times 1) \times \pi_0 \right) \alpha^{-1} \beta  \mathsf{L}^{C \times B}\left[\beta^{-1} \alpha  \mathsf{L}^{C \times A}[ \left( (1 \times 1) \times \langle 1, 0 \rangle \right) f] \right] +  (\pi_0 \times \pi_1)\mathsf{L}^{C}[\ell f] \\ 
&=~ \left( (1 \times 1) \times \pi_0 \right) \alpha^{-1} \beta  \mathsf{L}^{C \times B}\left[\beta^{-1} \alpha  \mathsf{L}^{C \times A}[\alpha^{-1} \alpha \left( (1 \times 1) \times \langle 1, 0 \rangle \right) f] \right] +  (\pi_0 \times \pi_1)\mathsf{L}^{C}[\ell f] \\ 
&=~ \left( (1 \times 1) \times \pi_0 \right) \alpha^{-1} \mathsf{L}^{C}_0\left[ \mathsf{L}^{C}_1[\alpha \left( (1 \times 1) \times \langle 1, 0 \rangle \right)  f] \right] +  (\pi_0 \times \pi_1)\mathsf{L}^{C}[\ell f] \\
&=~ c~ \mathsf{L}^{C\times B}\left[c~ \mathsf{L}^{C\times A}[f] \right] \tag{\ref{L74}}
\end{align*}
So we conclude that \textbf{[L.7.a]} holds. 

Conversely, suppose that \textbf{[L.7.a]} holds. For a map $f: C \times (A \times B) \to D$, define the map $f^\circ: (C \times A) \times (B \times \top) \to D$ as the following composite: 
  \[ f^\circ := \xymatrixcolsep{5pc}\xymatrix{(C \times A) \times (B \times \top) \ar[r]^-{(1 \times 1) \times \pi_0} & (C \times A) \times B \ar[r]^-{\alpha^{-1}} & C \times (A \times B) \ar[r]^-{f} & D  
  } \]
First recall that for the terminal object $\top$, $\pi_0: X \times \top \to X$ is an isomorphism with inverse ${\langle 1,0\rangle: X \to X \times \top}$. Therefore, we have the following equality: 
  \begin{equation}\label{L71}\begin{gathered} 
  f = \alpha  \left( (1 \times 1) \times \langle 1, 0 \rangle \right) f^\circ 
 \end{gathered}\end{equation}
We also have the following equalities (which we leave to the reader to check for themselves): 
 \begin{equation}\label{L72}\begin{gathered} 
 \beta^{-1} \alpha \left( (1 \times 1) \times \langle 1, 0 \rangle \right) =  \left( (1 \times 1) \times \langle 1, 0 \rangle \right) c = \alpha^{-1} \beta  \left( (1 \times 1) \times \langle 1, 0 \rangle \right)
 \end{gathered}\end{equation}
 Lastly, by Proposition \ref{lemL1}.(\ref{lemL1.iii}) (which did not require \textbf{[L.7]} to prove), one can show that for any map $g: (C \times X) \times (Y \times \top) \to D$, the following equality holds: 
 \begin{equation}\label{L73}\begin{gathered} 
 \mathsf{L}^{C \times X}\left[ \left( (1 \times 1) \times \langle 1, 0 \rangle \right) g \right] = \left( (1 \times 1) \times \langle 1, 0 \rangle \right) \mathsf{L}^{C \times X}\left[ g \right] 
 \end{gathered}\end{equation}
 Therefore, we compute that: 
 \begin{align*}
 \mathsf{L}^C_0[\mathsf{L}^C_1[f]] &=~ \beta \mathsf{L}^{C \times B}\left[\beta^{-1} \alpha \mathsf{L}^{C \times A}[\alpha^{-1} f]  \right] \\
 &=~ \beta \mathsf{L}^{C \times B}\left[\beta^{-1} \alpha \mathsf{L}^{C \times A}\left[\left( (1 \times 1) \times \langle 1, 0 \rangle \right) f^\circ  \right]  \right] \tag{\ref{L71}} \\
  &=~ \beta \mathsf{L}^{C \times B}\left[\beta^{-1} \alpha \left( (1 \times 1) \times \langle 1, 0 \rangle \right) \mathsf{L}^{C \times A}[ f^\circ  ]  \right] \tag{\ref{L73}} \\
    &=~ \beta \mathsf{L}^{C \times B}\left[ \left( (1 \times 1) \times \langle 1, 0 \rangle \right)c~ \mathsf{L}^{C \times A}[ f^\circ  ]  \right] \tag{\ref{L72}} \\
       &=~ \beta  \left( (1 \times 1) \times \langle 1, 0 \rangle \right) \mathsf{L}^{C \times B}\left[c~ \mathsf{L}^{C \times A}[ f^\circ  ]  \right] \tag{\ref{L73}} \\
              &=~ \alpha \left( (1 \times 1) \times \langle 1, 0 \rangle \right) c~ \mathsf{L}^{C \times B}\left[c~ \mathsf{L}^{C \times A}[ f^\circ  ]  \right] \tag{\ref{L72}} \\ 
&=~ \alpha \left( (1 \times 1) \times \langle 1, 0 \rangle \right)  \mathsf{L}^{C \times A}\left[c~ \mathsf{L}^{C \times B}[c f^\circ  ]  \right] \tag*{\textbf{[L.7.a]}} \\ 
&=~ \alpha  \mathsf{L}^{C \times A}\left[ \left( (1 \times 1) \times \langle 1, 0 \rangle \right)  c~ \mathsf{L}^{C \times B}[c f^\circ  ]  \right]\tag{\ref{L73}} \\
&=~ \alpha  \mathsf{L}^{C \times A}\left[\alpha^{-1} \beta \left( (1 \times 1) \times \langle 1, 0 \rangle \right)   \mathsf{L}^{C \times B}[c f^\circ  ]  \right]\tag{\ref{L72}} \\
&=~ \alpha  \mathsf{L}^{C \times A}\left[\alpha^{-1} \beta    \mathsf{L}^{C \times B}\left[ \left( (1 \times 1) \times \langle 1, 0 \rangle \right) c f^\circ  \right]   \right]\tag{\ref{L73}}\\
&=~ \alpha  \mathsf{L}^{C \times A}\left[\alpha^{-1} \beta    \mathsf{L}^{C \times B}\left[ \beta^{-1} \alpha \left( (1 \times 1) \times \langle 1, 0 \rangle \right) f^\circ  \right]   \right]\tag{\ref{L72}}\\
&=~ \alpha  \mathsf{L}^{C \times A}\left[\alpha^{-1} \beta    \mathsf{L}^{C \times B}\left[ \beta^{-1} f \right]   \right]\tag{\ref{L71}} \\
&=~  \mathsf{L}^C_1[\mathsf{L}^C_0[f]] 
\end{align*}
So we conclude that \textbf{[L.7]} holds. 
\end{proof}  

We now turn our attention to the relationship between differential combinators and systems of linearizing combinators. We first show that every differential combinator induces a system of linearizing combinators. Indeed, since every simple slice category of a Cartesian differential category is again Cartesian differential category, and every Cartesian differential category comes equipped with a canonical linearizing combinator, it follows that every Cartesian differential category admits a system of linearizing combinators. 

\begin{proposition}\label{DLprop2} Every Cartesian differential category, with differential combinator $\mathsf{D}$, admits a system of linearizing combinators where the linearizing combinators $\mathsf{L}_{\mathsf{D}^{C }}$ for the simple slice categories are defined as in Proposition \ref{DLprop}. As to not overload the subscripts, we denote this linearizing combinator instead by $\mathsf{L}^{C}_{\mathsf{D}} := \mathsf{L}_{\mathsf{D}^{C }}$. Equivalently, $\mathsf{L}^{C}_{\mathsf{D}}$ is defined as follows on a map $f: C \times A \to B$: 
\begin{equation}\label{contextlin}\begin{gathered} \mathsf{L}^{C}_{\mathsf{D}}[f] = \xymatrixcolsep{5pc}\xymatrix{C \times A \ar[r]^-{\ell} & (C \times A) \times (C \times A) \ar[r]^-{\mathsf{D}[f]} & B } \end{gathered}\end{equation}
where $\ell$ is the lifting map as defined as in (\ref{ldef}). Furthermore, 
\begin{enumerate}[{\em (i)}]
\item \label{DLprop2.i} For every map $f: C \times A \to B$, $\mathsf{L}^{C}_\mathsf{D}[f]$ is linear in its second argument;
\item \label{DLprop2.ii} A map $f: C \times A \to B$ is linear in its second argument if and only if $f$ is $\mathsf{L}^{C}_\mathsf{D}$-linear. 
\item \label{DLprop2.iii} $\mathsf{L} = \mathsf{L}_\mathsf{D}$, where $\mathsf{L}$ is the induced linearizing combinator from Proposition \ref{lemL1} and $\mathsf{L}_\mathsf{D}$ is the induced linearizing combinator from Proposition \ref{DLprop}. 
\end{enumerate}
\end{proposition}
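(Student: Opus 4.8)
The plan is to assemble the system $\{\mathsf{L}^C_\mathsf{D}\}$ slice by slice, reading off the first six axioms and items \emph{(i)}, \emph{(ii)} for free, and then verifying the two coherence axioms \textbf{[L.7]} and \textbf{[L.8]}. By Proposition \ref{Dcontext} each simple slice $\mathbb{X}[C]$ is a Cartesian differential category with differential combinator $\mathsf{D}^C$, so Proposition \ref{DLprop} endows $\mathbb{X}[C]$ with a linearizing combinator $\mathsf{L}_{\mathsf{D}^C} =: \mathsf{L}^C_\mathsf{D}$; this immediately yields \textbf{[L.1]}--\textbf{[L.6]} (these are literally the axioms of a linearizing combinator on $\mathbb{X}[C]$), and, by Proposition \ref{DLprop}.\emph{(i)},\emph{(ii)} applied inside $\mathbb{X}[C]$ together with Definition \ref{lin2def}, it yields \emph{(i)} ($\mathsf{L}^C_\mathsf{D}[f]$ is $\mathsf{D}^C$-linear, i.e.\ linear in its second argument) and \emph{(ii)} ($f$ is $\mathsf{D}^C$-linear iff it is $\mathsf{L}^C_\mathsf{D}$-linear). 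For the closed form (\ref{contextlin}), I would unwind $\mathsf{L}_{\mathsf{D}^C}[f]$ as the $\mathbb{X}[C]$-composite of $\langle 0,1\rangle\colon A\to A\times A$ with $\mathsf{D}^C[f]$; by Definition \ref{simpledef} this is $\langle\pi_0,\langle 0,\pi_1\rangle\rangle\,\mathsf{D}^C[f]$, and the computation already carried out in the proof of Lemma \ref{lin2lem}.\emph{(i)} shows $\langle\pi_0,\langle 0,\pi_1\rangle\rangle\,\mathsf{D}^C[f] = \ell\,\mathsf{D}[f]$.

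Axiom \textbf{[L.8]} then follows from the last clause of Proposition \ref{Dcontext}: $(h\times 1)$ commutes past $\langle\pi_0,\langle 0,\pi_1\rangle\rangle$ (a one-line product computation), and the substitution functor preserves the contextual differential combinator, so $(h\times 1)\mathsf{L}^C_\mathsf{D}[f] = \langle\pi_0,\langle 0,\pi_1\rangle\rangle\,\mathsf{D}^{C'}[(h\times 1)f] = \mathsf{L}^{C'}_\mathsf{D}[(h\times 1)f]$; conceptually, $\mathsf{L}^\bullet_\mathsf{D}$ is built functorially out of $\mathsf{D}^\bullet$, so any functor preserving $\mathsf{D}^\bullet$ preserves $\mathsf{L}^\bullet_\mathsf{D}$. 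For \textbf{[L.7]} I would not attack it directly but instead invoke Proposition \ref{L7Prop} and verify the equivalent \textbf{[L.7.a]}, namely $c\,\mathsf{L}^{C\times B}_\mathsf{D}[c\,\mathsf{L}^{C\times A}_\mathsf{D}[f]] = \mathsf{L}^{C\times A}_\mathsf{D}[c\,\mathsf{L}^{C\times B}_\mathsf{D}[c\,f]]$. Substituting the closed form $\mathsf{L}^P_\mathsf{D}[g] = \ell\,\mathsf{D}[g]$ throughout, and using that $c$ and $\ell$ are linear (Corollary \ref{corlin}) so that $\mathsf{D}[(\text{linear})\,\mathsf{D}[f]] = ((\text{linear})\times(\text{linear}))\,\mathsf{D}[\mathsf{D}[f]]$ by Lemma \ref{linlem}.\emph{(ii)} (equivalently, by the chain rule \textbf{[CD.5]}), reduces both sides to an expression $\Phi\,\mathsf{D}[\mathsf{D}[f]]$ with $\Phi$ an explicit composite of interchange maps, lifting maps, and their products. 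One then checks the two resulting $\Phi$'s have the same effect on $\mathsf{D}[\mathsf{D}[f]]$ using \textbf{[CD.7]} (symmetry of the second derivative), \textbf{[CD.6]} (collapsing the nested $\ell$'s), and the additivity of derivatives from \textbf{[CD.2]}.

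The bookkeeping inside this last step — keeping straight which copy of which factor each $c$ or $\ell$ acts on — is the main obstacle; it is the same "nastiest calculation" flavour flagged in connection with Proposition \ref{L7Prop}, though here it bottoms out cleanly in \textbf{[CD.6]} and \textbf{[CD.7]}. Everything else is routine.

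Finally, for \emph{(iii)}: the isomorphism $\mathbb{X}\cong\mathbb{X}[\top]$ (given by $\langle 0,1\rangle$ and $\pi_1$) carries $\mathsf{D}$ to $\mathsf{D}^\top$ — this is read off the formula in Proposition \ref{Dcontext} using that $\top$ is terminal — hence it carries $\mathsf{L}_\mathsf{D}$ to $\mathsf{L}_{\mathsf{D}^\top} = \mathsf{L}^\top_\mathsf{D}$; since the combinator $\mathsf{L}$ produced in Proposition \ref{lemL1} is precisely the transport of $\mathsf{L}^\top_\mathsf{D}$ back along this isomorphism, $\mathsf{L} = \mathsf{L}_\mathsf{D}$. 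Concretely, $\mathsf{L}[f] = \langle 0,1\rangle\,\mathsf{L}^\top_\mathsf{D}[\pi_1 f] = \langle 0,1\rangle\,\ell\,\mathsf{D}[\pi_1 f]$, and a short chain-rule simplification turns this into $\langle 0,1\rangle\,\mathsf{D}[f] = \mathsf{L}_\mathsf{D}[f]$.
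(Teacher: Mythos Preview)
Your proposal is correct and follows essentially the same route as the paper: harvest \textbf{[L.1]}--\textbf{[L.6]} and items \emph{(i)},\emph{(ii)} from Proposition~\ref{DLprop} applied slicewise via Proposition~\ref{Dcontext}, derive the closed form $\mathsf{L}^C_\mathsf{D}[f]=\ell\,\mathsf{D}[f]$, get \textbf{[L.8]} from substitution-stability of $\mathsf{D}^C$, and reduce \textbf{[L.7]} to \textbf{[L.7.a]} via Proposition~\ref{L7Prop} before unwinding with the closed form and linearity of $c,\ell$. One refinement worth noting: the paper's verification of \textbf{[L.7.a]} needs only \textbf{[CD.7]} together with a single purely combinatorial identity between structure maps, namely $c\,\ell\,(c\times c)(\ell\times\ell)\,c = \ell\,(c\times c)(\ell\times\ell)\bigl((c\times c)\times(c\times c)\bigr)$, and does not require \textbf{[CD.6]} or \textbf{[CD.2]} at all; once both sides are written as (structure map)$\cdot\mathsf{D}[\mathsf{D}[f]]$, this identity plus one application of \textbf{[CD.7]} finishes it.
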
 
\begin{proof} By Proposition \ref{Dcontext}, every simple slice category of a Cartesian differential category is again a Cartesian differential category with differential combinator $\mathsf{D}^{C }[f]$. Then by applying Proposition \ref{DLprop} to the simple slice categories, we obtain a linearizing combinator $\mathsf{L}^{C}_{\mathsf{D}}$ for each simple slice category. So $\mathsf{L}^{C}_{\mathsf{D}}$ satisfies {\bf [L.1]} through {\bf [L.6]} as in Definition \ref{syslindef}. Furthermore, since a map of type $C \times A \to B$ is linear in its second argument if it linear in the simple slice category, it follows from Proposition \ref{DLprop}.(\ref{DLprop.1}) that for every map $f: C \times A \to B$, $\mathsf{L}^{C}_\mathsf{D}[f]$ is linear in its second argument. Similarly, by Proposition \ref{DLprop}.(\ref{DLprop.2}), a map $f: C \times A \to B$ is linear in its second argument (i.e. $\mathsf{D}^{C }[f]=(1 \times \pi_1) f$) if and only if $f$ is $\mathsf{L}^{C}_\mathsf{D}$-linear (i.e. $\mathsf{L}^{C}_\mathsf{D}[f] = f$). 

 For a map $f: C \times A \to B$, by Proposition \ref{Dcontext} and by definition of composition in the simple slice category, $\mathsf{L}^{C}_{\mathsf{D}}[f]: C \times A \to B$ is easily worked out to be:
\begin{equation}\label{contextlin2}\begin{gathered} \mathsf{L}^{C}_{\mathsf{D}}[f] = \left \langle \pi_0, \langle 0, \pi_1 \rangle \right \rangle \mathsf{D}^{C }[f] \end{gathered}\end{equation}
Which can equivalently be rewritten as: 
\begin{equation}\label{contextlin1}\begin{gathered} \mathsf{L}^{C}_{\mathsf{D}}[f] = (1 \times \langle 0,1 \rangle) \mathsf{D}^{C }[f] \end{gathered}\end{equation}
Expanding out the definition of $\mathsf{D}^{C }[f]$, we obtain: 
\begin{align*} \mathsf{L}^{C}_{\mathsf{D}}[f] &=~ (1 \times \langle 0,1 \rangle) \mathsf{D}^{C }[f] \tag{\ref{contextlin1}} \\
&=~  (1 \times \langle 0,1 \rangle) \langle 1 \times \pi_0, 0 \times \pi_1 \rangle \mathsf{D}[f]  \tag{\ref{DCdef}} \\
&=~ \left \langle (1 \times \langle 0,1 \rangle)(1 \times \pi_0), (1 \times \langle 0,1 \rangle)(0 \times \pi_1) \right \rangle \mathsf{D}[f] \\
&=~ \left \langle 1 \times 0, 0 \times 1 \right \rangle \mathsf{D}[f] \\
&=~ \left( \langle 1, 0 \rangle \times \langle 0,1 \rangle \right) \mathsf{D}[f] \\
&=~ \ell ~ \mathsf{D}[f] 
\end{align*}
So we have that $\mathsf{L}^{C}_{\mathsf{D}}[f] = \ell \mathsf{D}[f]$. We now show that {\bf [L.8]} and {\bf [L.7.a]} also hold (which recall by Proposition \ref{L7Prop} is equivalent to showing that \textbf{[L.7]} holds) : \\\\
\noindent \textbf{[L.8]}: $(h \times 1)\mathsf{L}_{\mathsf{D}}^{C^\prime}[f]  = \mathsf{L}_{\mathsf{D}}^C[(h \times 1)f]$
\begin{align*}
(h\times 1) \mathsf{L}_{\mathsf{D}}^{C^\prime}[f] &=~ (h \times 1) (1 \times \langle 0,1 \rangle) \mathsf{D}^{C^\prime }[f]\tag{\ref{contextlin1}} \\
&=~  (1 \times \langle 0,1 \rangle) (h \times 1)\mathsf{D}^{C^\prime }[f] \\
&=~(1 \times \langle 0,1 \rangle) \mathsf{D}^{C}[(h \times 1)f] \tag{Proposition \ref{Dcontext}} \\
&=~  \mathsf{L}_{\mathsf{D}}^C[(h \times 1) f] \tag{\ref{contextlin1}}
\end{align*}
\noindent \textbf{[L.7.a]}: $c~ \mathsf{L}^{C\times B}_{\mathsf{D}}\left[c~ \mathsf{L}^{C\times A}_{\mathsf{D}}[f] \right]   = \mathsf{L}^{C\times A}_{\mathsf{D}}\left[c~ \mathsf{L}^{C\times B}_{\mathsf{D}}[c~ f] \right]$ \\\\
We leave it to the reader to check for themselves that the following equality holds (which can be checked by a straightforward but tedious calculation): 
\begin{equation}\label{clidentity}\begin{gathered} c\ell (c \times c) (\ell \times \ell) c = \ell (c \times c) (\ell \times \ell) \left( (c \times c) \times (c \times c) \right) \end{gathered}\end{equation}
Then we have that: 
\begin{align*} c~ \mathsf{L}^{C\times B}_{\mathsf{D}}\left[c~ \mathsf{L}^{C\times A}_{\mathsf{D}}[f] \right] &=~c \ell ~\mathsf{D}\left [ c \ell \mathsf{D}[f] \right] \tag{\ref{contextlin}} \\ 
&=~ c\ell (c \times c) (\ell \times \ell) ~\mathsf{D}\left [  \mathsf{D}[f] \right] \tag{Cor. \ref{corlin}.(\ref{corlin.c})+(\ref{corlin.ell}) + Lem. \ref{linlem}.(\ref{linlem.pre})} \\ 
&=~ c\ell (c \times c) (\ell \times \ell) c ~ \mathsf{D}\left [  \mathsf{D}[f] \right] \tag*{\textbf{[CD.7]}} \\ 
&=~\ell (c \times c) (\ell \times \ell) \left( (c \times c) \times (c \times c) \right) \mathsf{D}\left [  \mathsf{D}[f] \right] \tag{\ref{clidentity}} \\
&=~ \ell \mathsf{D}\left [ c \ell (c \times c) \mathsf{D}[f] \right] \tag{Cor. \ref{corlin}.(\ref{corlin.c})+(\ref{corlin.ell}) + Lem. \ref{linlem}.(\ref{linlem.pre})} \\ 
&=~ \ell \mathsf{D}\left [ c \ell~ \mathsf{D}[ c f] \right] \tag{Cor. \ref{corlin}(\ref{corlin.c}) + Lem. \ref{linlem}.(\ref{linlem.pre})} \\ 
&=~\mathsf{L}^{C\times A}_{\mathsf{D}}\left[c~ \mathsf{L}^{C\times B}_{\mathsf{D}}[c~ f] \right]
\end{align*}
We conclude that every Cartesian differential category has a system of linearizing combinators. We now show that the linearizing combinators from Proposition \ref{lemL1} and Proposition \ref{DLprop} are the same:  
\begin{align*}
\mathsf{L}[f] &=~ \langle 0,1 \rangle \pi_1 \mathsf{L}[f] \\
&=~  \langle 0,1 \rangle \mathsf{L}^C_\mathsf{D}[\pi_1 f] \tag{Proposition \ref{lemL1}.(\ref{lemL1.i})} \\
&=~ \langle 0,1 \rangle \ell \mathsf{D}[\pi_1 f]  \tag{\ref{contextlin}} \\ 
&=~ \langle 0,1 \rangle \ell(\pi_1 \times \pi_1) \mathsf{D}[f] \tag{Lem \ref{linlem}.(\ref{linlem.pre})+(\ref{linlem.pi})} \\
&=~  \langle 0,1 \rangle (\langle 1,0\rangle \times \langle 0,1 \rangle)(\pi_1 \times \pi_1) \mathsf{D}[f] \\
&=~ \langle 0,1 \rangle (0 \times 1) \mathsf{D}[f] \\
&=~ \langle 0,1 \rangle \mathsf{D}[f] \\
&=~ \mathsf{L}_\mathsf{D}[f] 
\end{align*}
So we conclude that $\mathsf{L} = \mathsf{L}_\mathsf{D}$. 
\end{proof} 

We now apply Proposition \ref{DLprop2} to the examples of Cartesian differential categories from Section \ref{CDCsec} to obtain examples of systems of linearizing combinators in context, specifically using the construction given in (\ref{contextlin}). 

 \begin{example} \normalfont In a category with finite biproducts, for a map $f: C \times A \to B$, the linearizing combinator in context $C$ is defined as evaluating $f$ at zero in its first argument: 
 \[ \mathsf{L}^C[f] = (0 \times 1) f \]
\end{example}

\begin{example} \normalfont In $\mathsf{SMOOTH}$, for a smooth function $F: \mathbb{R}^k \times \mathbb{R}^n \to \mathbb{R}^m$, $F = \langle f_1, \hdots, f_m \rangle$, its partial linearization is the smooth function $\mathsf{L}^{\mathbb{R}^k}[F]: \mathbb{R}^k \times \mathbb{R}^n \to \mathbb{R}^m$ defined as follows: 
\[ \mathsf{L}^{\mathbb{R}^k}[F](\vec z, \vec x) = \nabla(F)(\vec z, \vec 0) \cdot (\vec 0, \vec x) =\left \langle \sum \limits^n_{i=1} \frac{\partial f_1}{\partial x_i}(\vec z,\vec 0) x_i, \hdots, \sum \limits^n_{i=1} \frac{\partial f_m}{\partial x_i}(\vec z,\vec 0) x_i \right \rangle \]
 For example consider the polynomial function $f: \mathbb{R} \times \mathbb{R} \to \mathbb{R}$ defined as $f(z,x) = z^3x + z^2x^3+ x + 1$. The partial linearization of $f$ is defined by picking out the terms which are linear in $x$, that is, $\mathsf{L}^{\mathbb{R}}[f](z,x) = z^3 x + x$.  
\end{example}

\begin{example} \normalfont For $\mathsf{HoAbCat}_\mathsf{Ch}$, the partial linearizing combinator is precisely the partial linearization operator $\mathsf{D}^i_1$ as defined in \cite[Convention 5.11]{bauer2018directional}. Explicitly, for a functor $F: C \times A \to \mathsf{Ch}(B)$, its partial linearization is $\mathsf{L}^C[F] = \mathsf{D}^1_1[F]$. 
\end{example}

\begin{example} \normalfont For a Cartesian left additive category $\mathbb{X}$, the linearizing combinator in context $C$ for its cofree Cartesian differential category $\mathcal{D}(\mathbb{X})$ is worked out to be as follows for a $\mathsf{D}$-sequence $(f_0, f_1, f_2, \hdots): C \times A \to B$ (so $f_n: \mathsf{P}^n(C \times A) \to B$): 
\[ \mathsf{L}^C\left[ (f_0, f_1, f_2, \hdots ) \right] = ( \ell f_1, \mathsf{P}(\ell) f_2,  \mathsf{P}^2(\ell) f_3, \hdots )  \]
where recall that $\mathsf{P}$ is the product functor $\mathsf{P}(-) = - \times -$. 
\end{example}

\begin{example} \normalfont For a differential category $\mathbb{X}$ with finite products, the linearizing combinator in context $C$ for the coKleisli category $\mathbb{X}_\oc$ is worked out to be as follows for a coKleisli map ${f: \oc (C \times A) \to B}$: 
\[ \begin{array}[c]{c} \mathsf{L}^C[f] \end{array} := \begin{array}[c]{c} \xymatrixrowsep{1pc}\xymatrixcolsep{5pc}\xymatrix{\oc(C \times A) \ar[r]^-{\chi_{C,A}} & \oc C \otimes \oc A \ar[r]^-{1 \otimes \varepsilon_A} & \oc C \otimes A \ar[r]^-{\oc(\langle 1, 0 \rangle) \otimes \langle 0,1 \rangle} &  \\
\oc(C \times A) \otimes (C \times A) \ar[r]^-{\mathsf{d}_{C \times A}} & \oc(C \times A) \ar[r]^-{f} & B 
 }  \end{array}\] 
 where recall that $\chi_{C,A} = \Delta_{C \times A}(\oc(\pi_0) \otimes \oc(\pi_1))$. 
\end{example}

\begin{example} \normalfont For a differential storage category $\mathbb{X}$, the linearizing combinator in context $C$ for the coKleisli category $\mathbb{X}_\oc$ can alternatively be expressed using the codereliction map and the Seely isomorphisms as follows for a coKleisli map ${f: \oc (C \times A) \to B}$: 
\[ \mathsf{L}^C[f] := \!\xymatrixcolsep{3pc}\xymatrix{\oc(C \times A) \ar[r]^-{\chi_{C,A}} & \oc C \otimes \oc A \ar[r]^-{1 \otimes \varepsilon_A} & \oc C \otimes A \ar[r]^-{1 \otimes \eta_A} & \oc C \otimes \oc A \ar[r]^-{\chi^{-1}_{C,A}} & \oc(C \times A)  \ar[r]^-{f} & B 
 } \]
\end{example}

\begin{example} \normalfont For $\mathsf{CON}$, the partial linearizing combinator is defined as follows on a smooth function $f: C \times E \to F$:
\[ \mathsf{L}^C[f](z,x) := \lim \limits_{t \to 0} \frac{f(z,t \cdot x) - f(z,0)}{t} \]
\end{example}

We now prove the converse of Proposition \ref{DLprop2} and show that from a system of linearizing combinators one can build a differential combinator. The construction is a generalization of the differential combinator found in \cite{bauer2018directional}. The construction can also be described in terms of smooth functions. Consider the polynomial function $f(x) = x^3 + x$, then:
\[f(x+y) = (x+y)^3 + x + y = x^3 + 3 x^2 y + 3 x y^2 + y^3 + x +y\]
The linearization of $f(x+y)$ in terms of $y$ is $3x^2y + y$ which is precisely the directional derivative $\mathsf{D}[f](x,y)$. Therefore, the derivative of $f$ can be defined by linearizing in context $f$ precompose by the addition map. 

\begin{proposition}\label{LDprop} Every Cartesian left additive category with a system of linearizing combinators $\mathsf{L}^{C}$ is a Cartesian differential category with differential combinator $\mathsf{D}_\mathsf{L}$ defined as follows on a map $f: A \to B$: 
\begin{equation}\label{DLdef}\begin{gathered} \mathsf{D}_\mathsf{L}[f] := \mathsf{L}^{A}\left[\oplus_A f \right]
\end{gathered}\end{equation} 
where $\oplus_A$ is defined as in Lemma \ref{opluslem0}. Furthermore, 
\begin{enumerate}[{\em (i)}]
\item \label{LDprop.i} For every map $f:A \to B$, $\mathsf{D}_\mathsf{L}[f]$ is $\mathsf{L}^{A}$-linear;
\item \label{LDprop.ii} A map $f: C \times A \to B$ is linear in its second argument if and only if $f$ is $\mathsf{L}^{C}$-linear. 
\item \label{LDprop.iii} $\mathsf{L} = \mathsf{L}_{\mathsf{D}_\mathsf{L}}$, where $\mathsf{L}_{\mathsf{D}_\mathsf{L}}$ is the induced linearizing combinator from Proposition \ref{DLprop} and $\mathsf{L}$ is the induced  linearizing combinator from Proposition \ref{lemL1}. 
\end{enumerate}
\end{proposition}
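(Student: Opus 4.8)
The plan is to verify the seven axioms \textbf{[CD.1]}--\textbf{[CD.7]} for the operator $\mathsf{D}_\mathsf{L}[f] := \mathsf{L}^A[\oplus_A f]$ by, in each case, translating the desired identity into one about $\mathsf{L}^A$ (or $\mathsf{L}^B$, $\mathsf{L}^{A\times A}$, $\dots$) inside an appropriate simple slice category, and then invoking the corresponding linearizing-combinator axiom of the system (together with Lemma \ref{linprop1} and Lemma \ref{linclem1}). The dictionary to keep in mind is: for $f:A\to B$ and $g:B\to C$, the composite $\oplus_A(fg)$ is the $\mathbb{X}[A]$-composite of $\oplus_A f$ with the constant embedding $\pi_1 g$, while $\oplus_A\langle f,g\rangle=\langle\oplus_A f,\oplus_A g\rangle$; and one repeatedly uses the monoid coherences of Lemma \ref{opluslem0} (notably $\ell\,\oplus_{X\times Y}=1$ and $c(\oplus_A\times\oplus_A)\oplus_A=(\oplus_A\times\oplus_A)\oplus_A$), together with the elementary identities $\langle\pi_0,0\rangle\oplus_A=\pi_0$, $(f\times1)\oplus_B=\pi_0 f+\pi_1$, $(0\times1)\oplus_A=\pi_1$ (all coming from $\oplus_A=\pi_0+\pi_1$ and left additivity), and the substitution axiom \textbf{[L.8]} for context change along an arbitrary map.

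The additive and constant axioms are short. \textbf{[CD.1]} is immediate from \textbf{[L.1]} and left additivity, since $\oplus_A(f+g)=\oplus_A f+\oplus_A g$. \textbf{[CD.2]} is exactly the statement that $\mathsf{L}^A[\oplus_A f]$ is additive in its second argument, which is \textbf{[L.2]} read through Lemma \ref{addlem2}.(\ref{addlem2.ii}); the side equation $\langle1,0\rangle\mathsf{D}_\mathsf{L}[f]=0$ follows since $\langle1,0\rangle=\langle1,0\rangle\langle\pi_0,0\rangle$ and $\mathsf{L}^A[\oplus_A f]$ is reduced in its second argument. \textbf{[CD.4]} follows from \textbf{[L.4]} and $\oplus_A\langle f,g\rangle=\langle\oplus_A f,\oplus_A g\rangle$. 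For \textbf{[CD.3]}: $\mathsf{D}_\mathsf{L}[1]=\mathsf{L}^A[\oplus_A]=\mathsf{L}^A[\pi_0]+\mathsf{L}^A[\pi_1]=0+\pi_1$, using that $\pi_0$ is constant in its second argument (Lemma \ref{addlem2}.(\ref{addlem2.i}), hence annihilated by Lemma \ref{linclem1}.(\ref{linclem1.i})) while $\pi_1$ is the slice identity fixed by \textbf{[L.3]}; likewise, expanding $\oplus_{A\times B}\pi_i$ (via semi-additivity of $\pi_i$) as the sum of a ``context-projection-then-$\pi_i$'' term, which is constant in the second argument and so annihilated, and a ``fibre-projection-then-$\pi_i$'' term, which is the slice projection fixed by \textbf{[L.3]}, gives $\mathsf{D}_\mathsf{L}[\pi_i]=\pi_1\pi_i$.

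The chain rule \textbf{[CD.5]} is the step carrying the most bookkeeping. Writing $\oplus_A(fg)$ as the $\mathbb{X}[A]$-composite of $\oplus_A f$ with $\pi_1 g$ and applying \textbf{[L.5]} (in the form of Definition \ref{syslindef}), the identity $\langle\pi_0,0\rangle\oplus_A=\pi_0$ rewrites $\mathsf{D}_\mathsf{L}[fg]$ as $\langle\pi_0,\mathsf{D}_\mathsf{L}[f]\rangle\,\mathsf{L}^A\big[(\pi_1+\pi_0 f)g\big]$. On the other hand, \textbf{[L.8]} for the substitution functor along $f:A\to B$ gives $(f\times1)\mathsf{L}^B[\oplus_B g]=\mathsf{L}^A\big[(f\times1)\oplus_B g\big]=\mathsf{L}^A\big[(\pi_1+\pi_0 f)g\big]$, and since $\langle\pi_0 f,\mathsf{D}_\mathsf{L}[f]\rangle=\langle\pi_0,\mathsf{D}_\mathsf{L}[f]\rangle(f\times1)$, this shows $\langle\pi_0 f,\mathsf{D}_\mathsf{L}[f]\rangle\mathsf{D}_\mathsf{L}[g]$ equals the same expression; hence \textbf{[CD.5]}.

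For \textbf{[CD.6]}, Lemma \ref{linclem1}.(\ref{linclem1.ii}) and $\ell\,\oplus_{A\times A}=1$ (Lemma \ref{opluslem0}.(\ref{opluslem2})) give $\ell\,\mathsf{D}_\mathsf{L}[\mathsf{D}_\mathsf{L}[f]]=\mathsf{L}^A\big[\ell\,\oplus_{A\times A}\mathsf{D}_\mathsf{L}[f]\big]=\mathsf{L}^A[\mathsf{D}_\mathsf{L}[f]]=\mathsf{D}_\mathsf{L}[f]$, the last step because $\mathsf{D}_\mathsf{L}[f]=\mathsf{L}^A[\oplus_A f]$ is $\mathsf{L}^A$-linear by \textbf{[L.6]} (this also yields claim (\ref{LDprop.i})). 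For \textbf{[CD.7]}, Lemma \ref{linclem1}.(\ref{linclem1.iii}) rewrites $\oplus_{A\times A}\mathsf{L}^A[\oplus_A f]=c\,\mathsf{L}^{A\times A}[m]$ with $m:=(\oplus_A\times\oplus_A)\oplus_A f$, so $\mathsf{D}_\mathsf{L}[\mathsf{D}_\mathsf{L}[f]]=\mathsf{L}^{A\times A}\big[c\,\mathsf{L}^{A\times A}[m]\big]$; since $c\,m=m$ by the commutativity coherence of Lemma \ref{opluslem0}.(\ref{opluslem1}), applying \textbf{[L.7.a]} (equivalent to \textbf{[L.7]} by Proposition \ref{L7Prop}) with both contexts equal to $A\times A$ gives $c\,\mathsf{D}_\mathsf{L}[\mathsf{D}_\mathsf{L}[f]]=c\,\mathsf{L}^{A\times A}\big[c\,\mathsf{L}^{A\times A}[m]\big]=\mathsf{L}^{A\times A}\big[c\,\mathsf{L}^{A\times A}[c\,m]\big]=\mathsf{D}_\mathsf{L}[\mathsf{D}_\mathsf{L}[f]]$. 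Finally, claim (\ref{LDprop.ii}) comes from Lemma \ref{lin2lem}.(\ref{lin2lem.i}) together with the identity $\ell\,\mathsf{D}_\mathsf{L}[f]=\mathsf{L}^C[f]$ (Lemma \ref{linclem1}.(\ref{linclem1.ii}) and $\ell\,\oplus_{C\times A}=1$), so that being linear in the second argument and being $\mathsf{L}^C$-linear are the single condition $\mathsf{L}^C[f]=f$; and claim (\ref{LDprop.iii}) follows by factoring $\langle0,1\rangle:A\to A\times A$ through $\top\times A$, using \textbf{[L.8]} along $0:\top\to A$ and $(0\times1)\oplus_A=\pi_1$ to get $\mathsf{L}_{\mathsf{D}_\mathsf{L}}[f]=\langle0,1\rangle\mathsf{L}^A[\oplus_A f]=\langle0,1\rangle\mathsf{L}^\top[\pi_1 f]=\mathsf{L}[f]$. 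The main obstacle is the chain rule \textbf{[CD.5]} --- the only axiom requiring the full strength of \textbf{[L.5]} in a slice rather than just its corollaries, and where the simple-slice bookkeeping is most delicate; \textbf{[CD.7]} is the sole axiom drawing on \textbf{[L.7]}, but there the genuinely intricate computation has already been packaged into the proof of Proposition \ref{L7Prop}.
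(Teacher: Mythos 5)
Your proposal is correct and follows essentially the same route as the paper's proof: each axiom \textbf{[CD.$n$]} is reduced to the corresponding \textbf{[L.$n$]} in the slice over $A$, with \textbf{[L.8]} along $f$ handling the chain rule, Lemma \ref{linclem1} together with $\ell\,\oplus_{A\times A}=1$ and $c(\oplus_A\times\oplus_A)\oplus_A=(\oplus_A\times\oplus_A)\oplus_A$ handling \textbf{[CD.6]} and \textbf{[CD.7]} via \textbf{[L.7.a]}, and the identities $\ell\,\mathsf{D}_\mathsf{L}[f]=\mathsf{L}^C[f]$ and $(0\times 1)\oplus_A=\pi_1$ giving claims (\emph{ii}) and (\emph{iii}). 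The computations you sketch match the paper's step for step.
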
 
\begin{proof} We must show that $\mathsf{D}_\mathsf{L}$ satisfies \textbf{[CD.1]} to \textbf{[CD.7]}. \\ \\
\noindent \textbf{[CD.1]} $\mathsf{D}_\mathsf{L}[f+g] = \mathsf{D}_\mathsf{L}[f] + \mathsf{D}_\mathsf{L}[g]$ and $\mathsf{D}_\mathsf{L}[0]=0$ 
\begin{align*}
\mathsf{D}_\mathsf{L}[f+g] &=~\mathsf{L}^{A}\left[\oplus_A (f+g) \right] \\
&=~\mathsf{L}^{A}\left[\oplus_Af +  \oplus_Ag\right] \\
&=~\mathsf{L}^{A}\left[\oplus_Af \right] +  \mathsf{L}^{A}\left[\oplus_Ag\right] \tag*{\textbf{[L.1]}} \\ 
&=~\mathsf{D}_\mathsf{L}[f] + \mathsf{D}_\mathsf{L}[g] 
\end{align*}
\begin{align*}
\mathsf{D}_\mathsf{L}[0] &=~\mathsf{L}^{A}\left[\oplus_A 0 \right] \\
&=~\mathsf{L}^{A}\left[0 \right] \\
&=~ 0 \tag*{\textbf{[L.1]}}
\end{align*}

\noindent \textbf{[CD.2]} $(1 \times \oplus_A) \mathsf{D}_\mathsf{L}[f] = (1 \times \pi_0) \mathsf{D}_\mathsf{L}[f] + (1 \times \pi_1) \mathsf{D}_\mathsf{L}[f]$ and $\langle 1, 0 \rangle \mathsf{D}_\mathsf{L}[f]=0$ 
\begin{align*}
(1 \times \oplus_A) \mathsf{D}_\mathsf{L}[f]  &=~ (1 \times \oplus_A) \mathsf{L}^{A}\left[\oplus_A f \right] \\
&=~ (1 \times \pi_0) \mathsf{L}^{A}\left[\oplus_A f \right] + (1 \times \pi_1) \mathsf{L}^{A}\left[\oplus_A f \right] \tag*{\textbf{[L.2]}} \\ 
&=~(1 \times \pi_0) \mathsf{D}_\mathsf{L}[f] +(1 \times \pi_1) \mathsf{D}_\mathsf{L}[f] \\ \\
\langle 1,0 \rangle \mathsf{D}_\mathsf{L}[f] &=~ \langle 1, 0 \rangle \mathsf{L}^{A}\left[\oplus_A f \right] \\
&=~0 \tag*{\textbf{[L.2]}} 
\end{align*}

\noindent \textbf{[CD.3]} $\mathsf{D}_\mathsf{L}[1]=\pi_1$, $\mathsf{D}_\mathsf{L}[\pi_0] = \pi_1\pi_0$, and $\mathsf{D}_\mathsf{L}[\pi_1] = \pi_1\pi_1$ 
\begin{align*}
\mathsf{D}_\mathsf{L}[1] &=~\mathsf{L}^{A}\left[\oplus_A\right] \\
&=~\mathsf{L}^{A}\left[\pi_0 + \pi_1\right] \\
&=~\mathsf{L}^{A}\left[\pi_0 \right] + \mathsf{L}^{A}\left[\pi_1 \right] \tag*{\textbf{[L.1]}} \\
&=~ 0 + \mathsf{L}^{A}\left[\pi_1 \right] \tag{Lemma \ref{linclem1}.(\ref{linclem1.i})} \\
&=~\pi_1 \tag*{\textbf{[L.3]}} 
\end{align*}
\begin{align*}
\mathsf{D}_\mathsf{L}[\pi_i] &=~\mathsf{L}^{A \times B}\left[\oplus_{A \times B}\pi_i\right] \\
&=~ \mathsf{L}^{A \times B}\left[(\pi_0 + \pi_1)\pi_i\right] \\
&=~\mathsf{L}^{A \times B}\left[\pi_0\pi_i + \pi_1\pi_i \right] \tag{$\pi_i$ is additive}\\
&=~\mathsf{L}^{A \times B}\left[\pi_0\pi_i \right] + \mathsf{L}^{A \times B}\left[\pi_1\pi_i \right] \tag*{\textbf{[L.1]}} \\
&=~ 0 + \mathsf{L}^{A \times B}\left[\pi_1\pi_i \right] \tag{Lemma \ref{linclem1}.(\ref{linclem1.i})} \\
&=~ \pi_1\pi_i \tag*{\textbf{[L.3]}} 
\end{align*}

\noindent \textbf{[CD.4]} $\mathsf{D}_\mathsf{L}\left[\langle f, g \rangle \right] = \left \langle  \mathsf{D}_\mathsf{L}[f] , \mathsf{D}_\mathsf{L}[g] \right \rangle$ 
\begin{align*}
\mathsf{D}_\mathsf{L}\left[\langle f, g \rangle \right] &=~\mathsf{L}^{A}\left[\oplus_A\langle f, g \rangle \right] \\
&=~\mathsf{L}^{A}\left[\langle \oplus_Af, \oplus_Ag \rangle \right] \\
&=~\left \langle \mathsf{L}^{A}\left[\oplus_A f \right] , \mathsf{L}^{A}\left[\oplus_A g \right]  \right \rangle  \tag*{\textbf{[L.4]}} \\
&=~\left \langle  \mathsf{D}_\mathsf{L}[f] , \mathsf{D}_\mathsf{L}[g] \right \rangle
\end{align*}

\noindent\textbf{[CD.5]} $\mathsf{D}_\mathsf{L}[fg] = \langle \pi_0 f, \mathsf{D}_\mathsf{L}[f] \rangle \mathsf{D}_\mathsf{L}[g]$
\begin{align*}
\mathsf{D}_\mathsf{L}[fg] &=~\mathsf{L}^{A}\left[\oplus_Afg \right] \\
&=~\mathsf{L}^{A}\left[\langle \pi_0, \oplus_Af \rangle \pi_1g \right] \\
&=~ \langle \pi_0, \mathsf{L}^{A}\left[\oplus_Af \right]  \rangle~ \mathsf{L}^{A}\left[\left \langle \pi_0, \pi_1 + \langle \pi_0, 0 \rangle  \oplus_Af \right \rangle \pi_1g\right] \tag*{\textbf{[L.5]}} \\
&=~ \langle \pi_0, \mathsf{L}^{A}\left[\oplus_Af \right]  \rangle~ \mathsf{L}^{A}\left[\left(\pi_1 + \langle \pi_0, 0 \rangle  \oplus_Af \right) g\right] \\
&=~ \langle \pi_0, \mathsf{L}^{A}\left[\oplus_Af \right]  \rangle~ \mathsf{L}^{A}\left[\left(\pi_1 + \pi_0\langle 1, 0 \rangle  \oplus_Af \right) g\right] \\
&=~ \langle \pi_0, \mathsf{L}^{A}\left[\oplus_Af \right]  \rangle~ \mathsf{L}^{A}\left[\left(\pi_1 + \pi_0f \right) g\right] \tag{Lemma \ref{opluslem0}.(\ref{opluslem1})} \\
&=~ \langle \pi_0, \mathsf{L}^{A}\left[\oplus_Af \right]  \rangle~ \mathsf{L}^{A}\left[\left((f\times 1)\pi_1 + (f \times 1)\pi_0 \right) g\right] \\
&=~ \langle \pi_0, \mathsf{L}^{A}\left[\oplus_Af \right]  \rangle~ \mathsf{L}^{A}\left[(f\times 1)\left(\pi_1 +\pi_0 \right) g\right] \\
&=~ \langle \pi_0, \mathsf{L}^{A}\left[\oplus_Af \right]  \rangle~ \mathsf{L}^{A}\left[(f\times 1)\left(\pi_0 +\pi_1 \right) g\right] \\
&=~ \langle \pi_0, \mathsf{L}^{A}\left[\oplus_Af \right]  \rangle~ \mathsf{L}^{A}\left[(f \times 1)\oplus_B g\right] \\
&=~ \langle \pi_0, \mathsf{L}^{A}\left[\oplus_Af \right]  \rangle (f \times 1) \mathsf{L}^{B}\left[\oplus_B g\right]  \tag*{\textbf{[L.8]}} \\
&=~ \langle \pi_0 f, \mathsf{L}^{A}\left[\oplus_Af \right]  \rangle ~ \mathsf{L}^{B}\left[\oplus_B g\right] \\
&=~ \langle \pi_0 f, \mathsf{D}_\mathsf{L}[f] \rangle \mathsf{D}_\mathsf{L}[g] 
\end{align*}

\noindent \textbf{[CD.6]} $\ell \mathsf{D}_\mathsf{L}\left[\mathsf{D}_\mathsf{L}[f] \right] = \mathsf{D}_\mathsf{L}[f]$

\begin{align*} \ell~ \mathsf{D}_\mathsf{L}\left[\mathsf{D}_\mathsf{L}[f] \right]&=~ \ell ~ \mathsf{L}^{A \times A}\left[\oplus_{A \times A} \mathsf{D}_\mathsf{L}[f] \right]  \\
&=~ \mathsf{L}^{A}\left[\ell \oplus_{A \times A} \mathsf{D}_\mathsf{L}[f] \right]  \tag{Lemma \ref{linclem1}.(\ref{linclem1.ii})}\\
&=~ \mathsf{L}^{A}\left[  \mathsf{D}_\mathsf{L}[f] \right]  \tag{Lemma \ref{opluslem0}.(\ref{opluslem2})}  \\
&=~ \mathsf{L}^{A}\left[ \mathsf{L}^{A}\left[\oplus_Af \right] \right] \\
&=~\mathsf{L}^{A}\left[\oplus_Af \right] \tag*{\textbf{[L.6]}} \\
&=~  \mathsf{D}_\mathsf{L}[f]
\end{align*}

\noindent \textbf{[CD.7]} $c~ \mathsf{D}_\mathsf{L}\left[\mathsf{D}_\mathsf{L}[f] \right]=  \mathsf{D}_\mathsf{L}\left[\mathsf{D}_\mathsf{L}[f] \right]$
\begin{align*} c~ \mathsf{D}_\mathsf{L}\left[\mathsf{D}_\mathsf{L}[f] \right] &=~  c~ \mathsf{L}^{A \times A}\left[\oplus_{A \times A} \mathsf{L}^{A}[\oplus_Af] \right]  \\
&=~ c~ \mathsf{L}^{A \times A}\left[c (\oplus_{A} \times \oplus_A) \mathsf{L}^{A}[\oplus_Af] \right]  \tag{Lemma \ref{opluslem0}.(\ref{opluslem2})}  \\
&=~ c~ \mathsf{L}^{A \times A}\left[c~ \mathsf{L}^{A \times A}[(\oplus_{A} \times \oplus_A)\oplus_Af] \right]   \tag{Lemma \ref{linclem1}.(\ref{linclem1.iii})}  \\
&=~  \mathsf{L}^{A \times A}\left[c~ \mathsf{L}^{A \times A}[c(\oplus_{A} \times \oplus_A)\oplus_Af] \right]    \tag*{\textbf{[L.7.a]}} \\
&=~  \mathsf{L}^{A \times A}\left[c~ \mathsf{L}^{A \times A}[(\oplus_{A} \times \oplus_A)\oplus_Af] \right]   \tag{Lemma \ref{opluslem0}.(\ref{opluslem1})}  \\
&=~  \mathsf{L}^{A \times A}\left[c(\oplus_{A} \times \oplus_A) \mathsf{L}^{A}[\oplus_Af] \right] \tag{Lemma \ref{linclem1}.(\ref{linclem1.iii})}  \\
&=~ \mathsf{L}^{A \times A}\left[\oplus_{A \times A} \mathsf{L}^{A}[\oplus_Af] \right] \tag{Lemma \ref{opluslem0}.(\ref{opluslem2})}  \\
&=~ \mathsf{D}_\mathsf{L}\left[\mathsf{D}_\mathsf{L}[f] \right]
\end{align*}
So we conclude that $\mathsf{D}_\mathsf{L}$ is a differential combinator. Next, it follows immediately from \textbf{[L.6]} that:
\begin{align*}
\mathsf{L}^A[\mathsf{D}_\mathsf{L}[f]] &=~ \mathsf{L}^A[\mathsf{L}^A[\oplus_A f]] \\
&=~ \mathsf{L}^A[\oplus_A f] \tag*{\textbf{[L.6]}} \\
&=~ \mathsf{D}_\mathsf{L}[f]
\end{align*}
Therefore, $\mathsf{D}_\mathsf{L}[f]$ is $\mathsf{L}^A$-linear. Now suppose that a map $f: C \times A \to B$ was $\mathsf{L}^C$-linear, that is, $\mathsf{L}^C[f] = f$. Then we compute that:
\begin{align*}
\ell \mathsf{D}_\mathsf{L}[f] &=~ \ell  \mathsf{L}^{C \times A}[\oplus_{C \times A} f] \\
&=~ \mathsf{L}^C[\ell \oplus_{C \times A} f] \tag{Lemma \ref{linclem1}.(\ref{linclem1.ii})}\\
&=~ \mathsf{L}^C[ f]\tag{Lemma \ref{opluslem0}.(\ref{opluslem2})}  \\
&=~ f \tag{$f$ is $\mathsf{L}^C$-linear}
\end{align*}
Then by Lemma \ref{lin2lem}.(\ref{lin2lem.i}), $f$ is linear in its second argument. Conversely, suppose that ${f: C \times A \to B}$ is linear in its second argument, that is, $ \ell \mathsf{D}_\mathsf{L}[f] = f$. Then we have that: 
\begin{align*}
\mathsf{L}^C[ f] &=~\mathsf{L}^C[\ell \oplus_{C \times A} f]\tag{Lemma \ref{opluslem0}.(\ref{opluslem2})}  \\
&=~ \ell  \mathsf{L}^{C \times A}[\oplus_{C \times A} f] \tag{Lemma \ref{linclem1}.(\ref{linclem1.ii})}\\
&=~ \ell \mathsf{D}_\mathsf{L}[f] \\
&=~ f \tag{$f$ is linear in its second argument}
\end{align*}
Therefore, $f$ is $\mathsf{L}^C$-linear. Lastly, we show that, in this case, the constructions of the linearizing combinators from Proposition \ref{lemL1} and Proposition \ref{DLprop} are the same:  
\begin{align*}
\mathsf{L}_{\mathsf{D}_\mathsf{L}}[f] &=~ \langle 0,1 \rangle \mathsf{D}_\mathsf{L}[f] \\
&=~ \langle 0,1 \rangle \mathsf{L}^A[\oplus_A f] \\
&=~ \langle 0,1 \rangle (0 \times 1) \mathsf{L}^A[\oplus_A f] \\
&=~ \langle 0,1 \rangle \mathsf{L}^\top[(0 \times 1) \oplus_A f] \tag*{\textbf{[L.8]}} \\
&=~ \langle 0,1 \rangle \mathsf{L}^\top[(0 \times 1)(\pi_0 + \pi_1) f]  \\
&=~ \langle 0,1 \rangle \mathsf{L}^\top[\left((0 \times 1)\pi_0 + (0 \times 1)\pi_1 \right) f]  \\
&=~ \langle 0,1 \rangle \mathsf{L}^\top[\left(0 + \pi_1 \right) f]  \\
&=~ \langle 0,1 \rangle \mathsf{L}^\top[\pi_1 f]  \\
&=~ \mathsf{L}[f] 
\end{align*}
So we conclude that $\mathsf{L} = \mathsf{L}_{\mathsf{D}_\mathsf{L}}$. 
\end{proof}

We may now state the main result of this paper. 

\begin{theorem}\label{DLthm} For a Cartesian left additive category, there is a bijective correspondence between:
\begin{enumerate}[{\em (i)}]
\item Differential combinators;
\item Systems of linearizing combinators. 
\end{enumerate}
Therefore, a Cartesian differential category is precisely a Cartesian left additive category equipped with a system of linearizing combinators. 
\end{theorem}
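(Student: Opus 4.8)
The two halves of the correspondence are already available: Proposition \ref{DLprop2} sends a differential combinator $\mathsf{D}$ to the system of linearizing combinators $\mathsf{L}^C_\mathsf{D}[f] = \ell\,\mathsf{D}[f]$ (for $f$ in context $C$), and Proposition \ref{LDprop} sends a system $\{\mathsf{L}^C\}$ to the differential combinator $\mathsf{D}_\mathsf{L}[f] = \mathsf{L}^A[\oplus_A f]$ (for $f \colon A \to B$). Each of these propositions already checks that the output satisfies the appropriate axioms. So the only thing left for the theorem is to verify that the two assignments are mutually inverse, i.e.\ to establish the two round-trip identities $\mathsf{D}_{\mathsf{L}_\mathsf{D}} = \mathsf{D}$ and $\mathsf{L}^C_{\mathsf{D}_\mathsf{L}} = \mathsf{L}^C$ for every $C$. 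That is the plan, and the last sentence of the theorem is then a formal consequence.

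For the first round trip, start from $\mathsf{D}$, pass to the induced system $\mathsf{L}^C_\mathsf{D}$, and then back to $\mathsf{D}_{\mathsf{L}_\mathsf{D}}$. For $f \colon A \to B$ this gives $\mathsf{D}_{\mathsf{L}_\mathsf{D}}[f] = \mathsf{L}^A_\mathsf{D}[\oplus_A f] = \ell\,\mathsf{D}[\oplus_A f]$. I would then apply the chain rule \textbf{[CD.5]}, together with the fact that $\oplus_A$ is $\mathsf{D}$-linear (Corollary \ref{corlin}.(\ref{corlin.oplus})) so that $\mathsf{D}[\oplus_A] = \pi_1\oplus_A$ by Lemma \ref{linlem}, to rewrite $\mathsf{D}[\oplus_A f] = \langle \pi_0\oplus_A, \mathsf{D}[\oplus_A]\rangle\mathsf{D}[f] = (\oplus_A \times \oplus_A)\mathsf{D}[f]$. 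Finally $\ell(\oplus_A\times\oplus_A) = 1$ by Lemma \ref{opluslem0}.(\ref{opluslem2}), so $\mathsf{D}_{\mathsf{L}_\mathsf{D}}[f] = \mathsf{D}[f]$.

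For the second round trip, start from a system $\{\mathsf{L}^C\}$, pass to $\mathsf{D}_\mathsf{L}$, and then back to the system $\mathsf{L}^C_{\mathsf{D}_\mathsf{L}}$. For $f \colon C \times A \to B$ this gives $\mathsf{L}^C_{\mathsf{D}_\mathsf{L}}[f] = \ell\,\mathsf{D}_\mathsf{L}[f] = \ell\,\mathsf{L}^{C\times A}[\oplus_{C\times A} f]$. I would pull $\ell$ inside using Lemma \ref{linclem1}.(\ref{linclem1.ii}) and then use $\ell\,\oplus_{C\times A} = 1$ from Lemma \ref{opluslem0}.(\ref{opluslem2}) to get $\ell\,\mathsf{L}^{C\times A}[\oplus_{C\times A} f] = \mathsf{L}^C[\ell\,\oplus_{C\times A} f] = \mathsf{L}^C[f]$; this exact computation already appears inside the proof of Proposition \ref{LDprop}. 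Combining the two identities with Propositions \ref{DLprop2} and \ref{LDprop} yields the bijection, and since a Cartesian differential category is by definition a Cartesian left additive category carrying a differential combinator, the bijection is precisely the assertion that it is the same thing as a Cartesian left additive category carrying a system of linearizing combinators.

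The genuine difficulty for this theorem has in fact already been absorbed into the earlier results — the delicate equivalence of \textbf{[L.7]} and \textbf{[L.7.a]} in Proposition \ref{L7Prop}, and the axiom-by-axiom verifications in Propositions \ref{DLprop2} and \ref{LDprop}. What remains here is essentially bookkeeping; the only point requiring care is to keep straight which construction is being applied to which map — $\mathsf{L}^C_\mathsf{D}[f] = \ell\,\mathsf{D}[f]$ for $f$ in context $C$ versus $\mathsf{D}_\mathsf{L}[f] = \mathsf{L}^A[\oplus_A f]$ for $f \colon A \to B$ — and to ensure the object indices decorating $\ell$, $\oplus$, and $\mathsf{L}^{(-)}$ match up correctly when composing the two.
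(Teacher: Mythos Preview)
Your proposal is correct and follows essentially the same route as the paper's proof: both reduce the theorem to verifying the two round-trip identities $\mathsf{D}_{\mathsf{L}_\mathsf{D}} = \mathsf{D}$ and $\mathsf{L}^C_{\mathsf{D}_\mathsf{L}} = \mathsf{L}^C$, and both compute these via $\ell(\oplus_A \times \oplus_A) = 1$ and Lemma~\ref{linclem1}.(\ref{linclem1.ii}) together with $\ell\,\oplus_{C\times A} = 1$. The only cosmetic difference is that where you spell out the chain rule step $\mathsf{D}[\oplus_A f] = (\oplus_A \times \oplus_A)\mathsf{D}[f]$, the paper cites Lemma~\ref{linlem}.(\ref{linlem.pre}) directly (which packages exactly that computation for linear maps).
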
 
\begin{proof}
It suffices to show that the constructions of Proposition \ref{LDprop} and Proposition \ref{DLprop} are inverses of each other. Starting with a differential combinator $\mathsf{D}$, we first show that $\mathsf{D}_{\mathsf{L}_\mathsf{D}} = \mathsf{D}$: 
\begin{align*}
\mathsf{D}_{\mathsf{L}_\mathsf{D}}[f] &=~ \mathsf{L}^{A}_\mathsf{D}[\oplus_Af] \tag{\ref{DLdef}} \\
&=~ \ell  ~\mathsf{D}\left[\oplus_Af\right] \tag{\ref{contextlin}}\\ 
&=~\ell  \left( \oplus_A \times \oplus_A \right)  \mathsf{D}[f]  \tag{Cor. \ref{corlin}(\ref{corlin.oplus}) + Lem. \ref{linlem}.(\ref{linlem.pre})}\\
&=~  \mathsf{D}[f]  \tag{Lemma \ref{opluslem0}.(\ref{opluslem2})} 
\end{align*}
Next, starting with a system of linearizing combinators $\mathsf{L}^{C}$, we show that $\mathsf{L}^{C}_{\mathsf{D}_\mathsf{L}} = \mathsf{L}^{C}$: 
\begin{align*}
\mathsf{L}^{C}_{\mathsf{D}_\mathsf{L}}[f] &=~ \ell~ \mathsf{D}_\mathsf{L}[f] \tag{\ref{contextlin}}\\
&=~ \ell~  \mathsf{L}^{C \times A}[\oplus_{C \times A}f] \tag{\ref{DLdef}} \\
&=~\mathsf{L}^{C}[\ell \oplus_{C \times A}f] \tag{Lemma \ref{linclem1}.(\ref{linclem1.ii})} \\
&=~ \mathsf{L}^{C}\left[ f \right]  \tag{Lemma \ref{opluslem0}.(\ref{opluslem2})} 
\end{align*}
Thus, differential combinators and systems of linearizing combinators are in bijective correspondence. Therefore, we conclude that a Cartesian differential category is precisely a Cartesian left additive category equipped with a system of linearizing combinators. 
\end{proof}

It is worth pointing out that the bijective correspondence between differential combinators and systems of linearizing combinators is analogous to the bijective correspondence between deriving transformations and coderelictions for differential categories \cite[Theorem 4]{Blute2019} (or as explained in Example \ref{ex:diffstor}). Indeed, recall that from a codereliction $\eta$, one defines a deriving transformation as $\mathsf{d} = (1 \otimes \eta) \nabla$. In the coKleisli category, the multiplication $\nabla$ plays the role of pre-composing by addition $\oplus$ (since $\oc$ is an additive bialgebra modality \cite[Definition 5]{Blute2019}), while $1 \otimes \eta$ plays the role of linearizing the second argument, that is, the linearizing combinator in context $\mathsf{L}^C$. The converse construction is explained in Example \ref{ex:diffstorlin}. The keen eye reader may note that the ``partial'' codereliction $1 \otimes \eta$ can easily be defined from the ``total'' codereliction. The reason for this is the presence of the Seely isomorphisms $\oc(C \times A) \cong \oc C \otimes \oc A$ which allows us to split off the context part and then bring it back afterwards. Unfortunately, as previously mentioned, this does not work in arbitrary Cartesian differential categories. To do so, we require the base category to be Cartesian closed, which we discuss in the next section. 

We conclude this section by providing an example of a Cartesian left additive category which has a total linearization but does not have partial linearization.  This means that it is not possible, in general, to derive partial linearization from the presence of a total linearizing combinator. 

\begin{example}\label{counter-example} \normalfont Recall that a function $F: \mathbb{R}^n \to \mathbb{R}^m$, which is a tuple $F = \langle f_1, \hdots, f_m \rangle$ of functions $f_i: \mathbb{R}^n \to \mathbb{R}$, is a $\mathcal{C}^1$ function if for each $f_i$, all partial derivatives $\frac{\partial f_i}{\partial x_j}$ exists and are continuous. Then define $\mathcal{C}^1\text{-}\mathsf{DIFF}$ be the category whose objects are the Euclidean real vector spaces $\mathbb{R}^n$ and whose maps are $\mathcal{C}^1$ functions ${F: \mathbb{R}^n \to \mathbb{R}^m}$ between them. $\mathcal{C}^1\text{-}\mathsf{DIFF}$ is a Cartesian left additive category in the obvious way, and note that $\mathsf{SMOOTH}$ is a sub-Cartesian left additive category of $\mathcal{C}^1\text{-}\mathsf{DIFF}$. Notice that $\mathcal{C}^1\text{-}\mathsf{DIFF}$ has a (total) linearizing combinator $\mathsf{L}$ defined in the same way as the linearizing combinator in $\mathsf{SMOOTH}$, that is, for a $\mathcal{C}^1$ function $F = \langle f_1, \hdots, f_m \rangle$: 
\[ \mathsf{L}[F](\vec x) =  \left \langle  \sum \limits^n_{i=1} \frac{\partial f}{\partial x_i}(\vec 0) x_i, \hdots,  \sum \limits^n_{i=1} \frac{\partial f}{\partial x_i}(\vec 0) x_i \right \rangle \]
However, this category, while having a total linearizing combinator, does not have partial linearization.  If $\mathcal{C}^1\text{-}\mathsf{DIFF}$ had partial linearization then $\mathcal{C}^1\text{-}\mathsf{DIFF}$ would also have a differential combinator, but this can't be since the derivative of $\mathcal{C}^1$ functions are not necessarily $\mathcal{C}^1$ functions. 

Explicitly, consider the function ${f: \mathbb{R} \to \mathbb{R}}$, $f(x) = \abs{x}^{\frac{3}{2}}$, which is a $\mathcal{C}^1$ function since its derivative $f^\prime(x) =  \frac{3x}{2 \sqrt{\abs{x}}}$ 
exists and is continuous. If partial linearization was possible, then we would be able to define $\mathsf{D}[f]: \mathbb{R} \times \mathbb{R} \to \mathbb{R}$ as follows:
\[ \mathsf{D}[f](x,y) = \mathsf{L}[z \mapsto f(x+z)](y) =  \frac{3xy}{2 \sqrt{\abs{x}}} \]
However, this linearization is not a $\mathcal{C}^1$ function (since its derivative is undefined at $0$) and so not a map in $\mathcal{C}^1\text{-}\mathsf{DIFF}$. So we conclude that $\mathcal{C}^1\text{-}\mathsf{DIFF}$ has a total linearizing combinator, however, it is not induced by a differential combinator and, therefore, the category does not have partial linearization.  
 \end{example}

\section{Linearizing Combinators in the Closed Setting}\label{closed-sec}

We would like to prove the converse of Proposition \ref{lemL1}, that is, we would like to define partial linearization from total linearization. As previously discussed, in general this is not necessarily possible. However in the setting of a Cartesian closed category, it is possible to construct a system of linearizing combinators from a linearizing combinator on the base category. The key to this construction is the ability to curry and uncurry maps, which allows us to move the context of a map from its domain to its codomain. Indeed, given a map $f: C \times A \to B$, to linearize $A$ while keeping $C$ in context, one takes the total linearization of its curry $\lambda(f): A \to [C,B]$ and then uncurry to obtain $\mathsf{L}^C[f]: C \times A \to B$. For this to work, one must also require that the linearizing combinator be compatible with the closed structure, which we call an \emph{exponential} linearizing combinator. Furthermore, we will also show that Cartesian \emph{closed} differential categories are precisely Cartesian \emph{closed} left additive categories equipped with an exponential linearizing combinator. 

We begin this section by setting up notation for Cartesian closed categories and reviewing some basic, but very important, properties (see \cite[Part I]{lambek1988introduction} for a more detailed introduction on Cartesian closed categories). For a Cartesian closed category $\mathbb{X}$, we denote the internal-hom by $[C,A]$, the evaluation map by $\epsilon_{C,A}: C \times [C,A] \to A$ (from now on we will omit the subscripts and simply write $\epsilon$ when there is no confusion), and the curry of a map $f: C \times A \to B$ as the map ${\lambda(f): A \to [C,B]}$, that is, $\lambda(f)$ is the unique map such that:
\[ (1 \times \lambda(f)) \epsilon = f \]
Conversely, define the un-curry of a map of type ${g: A \to [C,B]}$ as the map $\lambda^{-1}(g) : C \times A \to B$ which is defined as:
\[\lambda^{-1}(g) := (1 \times g) \epsilon\]
Therefore, $\lambda\left( \lambda^{-1}(g) \right) = g$ and $\lambda^{-1}\left( \lambda(f) \right) = f$. 

Next we review the notion of Cartesian closed differential categories. As the name suggests, Cartesian closed differential categories are Cartesian differential categories whose underlying category is also Cartesian closed and such that the differential combinator is compatible with the curry operator. Furthermore, Cartesian closed differential categories provide suitable models to interpret \emph{differential} $\lambda$-calculus \cite{EHRHARD20031}. Cartesian closed differential categories are also sometimes called differential $\lambda$ categories. For a more in-depth introduction to Cartesian closed differential categories, we refer the reader to \cite{bucciarelli2010categorical,Cockett-2019,manzonetto_2012}. 

We must first discuss the notion of Cartesian closed left additive categories: 

\begin{definition} A \textbf{Cartesian closed left additive category} \cite[Section 1.4]{blute2009cartesian} is a Cartesian left additive category which is also a Cartesian closed category such that the currying operator preserves the additive structure, that is, $\lambda(f+g) = \lambda(f) + \lambda(g)$ and $\lambda(0) = 0$ (note that this implies that $\lambda^{-1}(f+g) = \lambda^{-1}(f) + \lambda^{-1}(g)$ and $\lambda^{-1}(0) = 0$). 
\end{definition}

As shown in \cite[Lemma 4.10]{Cockett-2019}, there are two equivalent ways of expressing compatibility between the closed structure and the differential combinator: one in terms of the curry operator and one in terms of the evaluation map. 

\begin{definition}\label{CDCcloseddef} A \textbf{Cartesian closed differential category} \cite[Section 4.6]{Cockett-2019} (also known as a \textbf{differential $\lambda$ category} \cite{bucciarelli2010categorical,manzonetto_2012}) is a Cartesian differential category which is also a Cartesian closed left additive category such that one of the following additional axioms hold: 
\begin{description}
\item[{\bf [CD.$\lambda$]}] For every map $f: C \times A \to B$, $\mathsf{D}[\lambda(f)] = \lambda\left( \mathsf{D}^C[f] \right)$, where $\mathsf{D}^C$ is defined as in (\ref{DCdef}). 
\end{description}
or equivalently,
\begin{description}
\item[{\bf [CD.ev]}] Evaluation maps $\epsilon: C \times [C,A] \to A$ are linear in their second argument (Definition \ref{lin2def}), that is, $\mathsf{D}^C[\epsilon] = (1 \times \pi_1) \epsilon$, or equivalently by Lemma \ref{lin2lem}.(\ref{lin2lem.i}), $\ell \mathsf{D}[\epsilon] = \epsilon$. 
\end{description}
\end{definition}

Here are now some examples of Cartesian closed differential categories. 

\begin{example} \normalfont Every model of the differential $\lambda$-calculus \cite{EHRHARD20031} induces a Cartesian closed differential category \cite[Theorem 4.3]{Cockett-2019}, and conversely every Cartesian closed differential category gives rise to a model of the differential $\lambda$-calculus \cite[Theorem 4.12]{bucciarelli2010categorical}. 
\end{example}

\begin{example} \normalfont Let $\mathbb{X}$ be a differential storage category such that $\mathbb{X}$ is also a symmetric monoidal closed category, where we denote the internal-hom in $\mathbb{X}$ as $A \multimap B$. Then the coKleisli category $\mathbb{X}_\oc$ is a Cartesian closed differential category \cite[Theorem 4.4.2]{blute2015cartesian}. The internal-homs in the coKleisli category $\mathbb{X}_\oc$ are defined as $[A,B] = \oc A \multimap B$. Examples of such coKleisli categories are discussed in \cite[Section 5]{bucciarelli2010categorical}, which include the relational model and the finiteness space model. 
\end{example}

\begin{example} \normalfont $\mathsf{CON}_{lin}$ is a differential storage category such that $\mathsf{CON}_{lin}$ is symmetric monoidal closed \cite[Theorem 4.2]{blute2010convenient}. Therefore, since $\mathsf{CON}$ is isomorphic to the coKleisli category of the comonad $\oc$ on $\mathsf{CON}_{lin}$, it follows that $\mathsf{CON}$ is also a Cartesian closed differential category (see \cite[Theorem 3.12]{kriegl1997convenient} for its Cartesian closed structure). In particular, for convenient vector spaces $E$ and $F$, if we let $\mathcal{L}(E,F)$ denote the set of (smooth) linear function between $E$ and $F$ and $\mathcal{C}^\infty(E,F)$ the set of all smooth functions between $E$ and $F$, then $\mathcal{C}^\infty(E,F) \cong L(\oc E, F)$ \cite[Theorem 6.3]{blute2010convenient}.
\end{example}

We now turn our attention to the main objective of this section: on how to define partial linearization from total linearization in the setting of a Cartesian closed left additive category. To do so, we introduce the notions of linearizing combinators and systems of linearizing combinators which are compatible with the closed structure. We begin with \emph{closed} systems of linearizing combinators, which are the Cartesian closed differential category version of systems of linearizing combinators. 

\begin{definition}\label{closeddef} A \textbf{closed system of linearizing combinators} on a Cartesian closed left additive category $\mathbb{X}$ is a system of linearizing combinators $\mathsf{L}^C$ on $\mathbb{X}$ such that the following extra axiom holds: 
\begin{description}
\item[{\bf [L.$\lambda$]}] For every map $f: C \times A \to B$, $\mathsf{L}[\lambda(f)] = \lambda\left( \mathsf{L}^C[f] \right)$, where $\mathsf{L}$ is defined as in (\ref{L1def}). 
\end{description}
\end{definition}

As we will see in Theorem \ref{finalthm}, to give a Cartesian closed differential category is precisely to give a closed system of linearizing combinators. As such, {\bf [L.$\lambda$]} is the linearizing combinator analogue of {\bf [CD.$\lambda$]}. Therefore, the extra axiom of a closed system of linearizing combinators can equivalently be defined in terms of the evaluation map, {\bf [L.ev]}, which is the linearizing combinator analgoue of {\bf [CD.ev]}. 

\begin{lemma} {\bf [L.$\lambda$]} is equivalent to the following: 
\begin{description}
\item[{\bf [L.ev]}] Evaluation maps $\epsilon: C \times [C,A] \to A$ are $\mathsf{L}^C$-linear, that is, $\mathsf{L}^C[\epsilon] = \epsilon$. 
\end{description}
\end{lemma}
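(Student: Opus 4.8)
The plan is to prove the two axioms equivalent by exhibiting each from the other using the curry/uncurry adjunction together with the defining relation $\mathsf{L}[f] = \langle 0,1\rangle\,\mathsf{L}^\top[\pi_1 f]$ from \eqref{L1def} and the already-established compatibility of substitution functors with the linearizing combinators in context (\textbf{[L.8]} and Proposition \ref{lemL1}.(\ref{lemL1.iii})). The key observation is that for any $g: A \to [C,B]$ we have $\lambda^{-1}(g) = (1\times g)\epsilon$, and by naturality/uncurrying, $\lambda^{-1}(\mathsf{L}[\lambda(f)])$ and $\mathsf{L}^C[f]$ can be compared once we know how $\mathsf{L}^C$ interacts with maps of the form $(1\times g)\epsilon$.

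First I would assume \textbf{[L.$\lambda$]} and derive \textbf{[L.ev]}. Apply \textbf{[L.$\lambda$]} to the identity-like map $\epsilon: C\times [C,A]\to A$: since $\lambda(\epsilon) = 1_{[C,A]}$ by the universal property of currying, \textbf{[L.$\lambda$]} gives $\mathsf{L}[1_{[C,A]}] = \lambda\left(\mathsf{L}^C[\epsilon]\right)$. By \textbf{[L.3]} (for the total combinator $\mathsf{L}$, as established in Proposition \ref{lemL1}) we have $\mathsf{L}[1_{[C,A]}] = 1_{[C,A]} = \lambda(\epsilon)$. Hence $\lambda(\epsilon) = \lambda\left(\mathsf{L}^C[\epsilon]\right)$, and since $\lambda$ is a bijection on hom-sets we conclude $\mathsf{L}^C[\epsilon] = \epsilon$, which is \textbf{[L.ev]}.

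Conversely, assume \textbf{[L.ev]} and derive \textbf{[L.$\lambda$]}. Take any $f: C\times A\to B$ and write $g := \lambda(f): A \to [C,B]$, so $f = (1\times g)\epsilon = \lambda^{-1}(g)$. The strategy is to compute $\mathsf{L}^C[f] = \mathsf{L}^C[(1\times g)\epsilon]$. Regarding $1\times g: C\times A \to C\times[C,B]$ as a map in context $C$ — more precisely as the map $\langle \pi_0,\pi_1 g\rangle$ — and using the chain-rule axiom \textbf{[L.5]} together with Lemma \ref{linprop1}, the composite $(1\times g)\epsilon$ linearizes as $\mathsf{L}^C[\langle\pi_0,\pi_1 g\rangle\epsilon]$; since $\epsilon$ is $\mathsf{L}^C$-linear by \textbf{[L.ev]}, Lemma \ref{lstable-lem}.(\ref{lstable-lem.post}) (applied in the simple slice $\mathbb{X}[C]$) yields $\mathsf{L}^C[(1\times g)\epsilon] = \mathsf{L}^C[1\times g]\,\epsilon$ in the slice, i.e. $\mathsf{L}^C[f] = \langle\pi_0,\pi_1\,\mathsf{L}[g]\rangle\epsilon = (1\times\mathsf{L}[g])\epsilon = \lambda^{-1}(\mathsf{L}[g])$, where the passage from $\mathsf{L}^C[1\times g]$ to $1\times\mathsf{L}[g]$ uses \textbf{[L.8]}/Proposition \ref{lemL1}.(\ref{lemL1.i}) to push the linearization past the projection-built structure and identify $\mathsf{L}^C[\pi_1 g] = \pi_1\mathsf{L}[g]$. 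Applying $\lambda$ to both sides gives $\lambda\left(\mathsf{L}^C[f]\right) = \mathsf{L}[g] = \mathsf{L}[\lambda(f)]$, which is \textbf{[L.$\lambda$]}.

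The main obstacle is making the middle step of the converse direction precise: expressing $1\times g$ correctly as a map in the simple slice $\mathbb{X}[C]$ and verifying that the slice-level chain rule \textbf{[L.5]} together with $\mathsf{L}^C$-linearity of $\epsilon$ really collapses $\mathsf{L}^C[(1\times g)\epsilon]$ to $(1\times \mathsf{L}[g])\epsilon$. This requires carefully tracking the difference between composition in $\mathbb{X}$ and composition in $\mathbb{X}[C]$, and using that $g$ does not depend on the context variable (so $\pi_1 g$ is reduced in its second argument, letting Lemma \ref{linprop1}.(\ref{linprop1.ii}) apply). Once that bookkeeping is done, the equivalence follows formally from the bijectivity of $\lambda$.
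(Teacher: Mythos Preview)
Your proof is correct and follows essentially the same route as the paper: both directions hinge on $\lambda(\epsilon)=1$ together with \textbf{[L.3]} for the forward implication, and on writing $f=(1\times\lambda(f))\epsilon=\langle\pi_0,\pi_1\lambda(f)\rangle\epsilon$ and applying Lemma~\ref{lstable-lem}.(\ref{lstable-lem.post}) in the slice plus Proposition~\ref{lemL1}.(\ref{lemL1.i}) for the converse. Your aside in the last paragraph about $\pi_1 g$ being ``reduced in its second argument'' is incorrect (in the slice this would require $0g=0$, which fails for general $g$) and is not needed---Lemma~\ref{lstable-lem}.(\ref{lstable-lem.post}) imposes no condition on the first factor---so simply drop that remark.
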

\begin{proof} Suppose that {\bf [L.$\lambda$]} holds. Since $\epsilon = \lambda^{-1}(1)$, we have that: 
\begin{align*}
\mathsf{L}^C[\epsilon] &=~ \lambda^{-1}\left( \lambda\left( \mathsf{L}^C[\epsilon] \right) \right) \\
&=~ \lambda^{-1}\left( \mathsf{L}[\lambda\left( \epsilon \right)] \right)\tag*{\textbf{[L.$\lambda$]}} \\
&=~ \lambda^{-1}\left( \mathsf{L}[1] \right) \\
&=~ \lambda^{-1}(1) \tag*{\textbf{[L.3]}} \\
&=~ \epsilon
\end{align*}
So $\mathsf{L}^C[\epsilon] = \epsilon$, and so $\epsilon$ is $\mathsf{L}^C$-linear. Conversely, suppose that {\bf [L.ev]} holds. Then we compute:
\begin{align*}
\lambda\left( \mathsf{L}^C[f] \right) &=~ \lambda\left( \mathsf{L}^C[\lambda^{-1}\left( \lambda(f) \right)] \right) \\
&=~ \lambda\left( \mathsf{L}^C[(1 \times \lambda(f) ) \epsilon] \right) \\
&=~  \lambda\left( \mathsf{L}^C[ \langle \pi_0, \pi_1 \lambda(f) \rangle \epsilon] \right) \\
&=~ \lambda\left( \langle \pi_0, \mathsf{L}^{C}[\pi_1 \lambda(f)] \rangle~ \epsilon \right) \tag{\textbf{[L.ev]} +Lem.\ref{lstable-lem}.(\ref{lstable-lem.post})} \\
&=~  \lambda\left( \langle \pi_0, \pi_1 \mathsf{L}[\lambda(f)] \rangle~ \epsilon \right)  \tag{Prop.\ref{lemL1}.(\ref{lemL1.i})} \\
&=~  \lambda\left( (1 \times \mathsf{L}[\lambda(f)])  \epsilon \right) \\
&=~ \lambda\left(\lambda^{-1}\left(\mathsf{L}[\lambda(f)] \right) \right) \\
&=~ \mathsf{L}[\lambda(f)]
\end{align*}
So $\mathsf{L}[\lambda(f)] = \lambda\left( \mathsf{L}^C[f] \right)$. 
\end{proof} 

We now define \emph{exponentiable} linearizing combinators, which from a system of linear maps perspective is the analogue of an exponentiable system of maps \cite[Definition 2.2.1]{blute2015cartesian}. To do so, we must first review the canonical monads of the form $[C, -]$ in a Cartesian closed category. For a pair of maps $f: C \to D$ and $g: A \to B$, define the map $[f,g]: [D, A] \to [C,B]$ as:
\[ [f,g] := \lambda \left( (f \times 1) \epsilon g \right) \]
Intuitively, $[f,g]$ is the map which pre-composes by $f$ and post-composes by $g$. In particular, note that $[-,-]$ is contravariant in its first argument and covariant in its second argument, that is:
\[[fh,kg] = [h,k][f,g]\] 
For each object $C$, define the functor $E^C: \mathbb{X} \to \mathbb{X}$ on objects as $E^C(A) = [C,A]$ and on maps $E^C(f) = [1,f]$. $E^C$ is a monad \cite[Part I, Section 7]{lambek1988introduction} where the monad unit $\eta^C_A: A \to [C,A]$ and the monad multiplication $\mu^C_A: [C, [C,A]] \to [C,A] $ are defined respectively as follows: 
\begin{equation}\label{etamudef}\begin{gathered} \eta^C_A := \lambda(\pi_1) \quad\quad \quad \quad \mu^C_A := \lambda \left( \langle \pi_0, \epsilon \rangle \epsilon \right) 
 \end{gathered}\end{equation}
 Once again, as to not overload notation, we will omit the subscripts and superscripts and simply write $\eta$ and $\mu$ when there is no confusion. 

\begin{definition}\label{Lexpdef} An \textbf{exponentiable linearizing combinator} $\mathsf{L}$ on a Cartesian closed left additive category $\mathbb{X}$ is a linearizing combinator $\mathsf{L}$ on $\mathbb{X}$ such that the following extra three axioms hold: 
\begin{enumerate}[{\bf [EL.1]}]
\item $\mathsf{L}[\eta]=\eta$ and $\mathsf{L}[\mu]=\mu$
\item $\mathsf{L}\left [ [f,g] \right] = \left[ f, \mathsf{L}[g] \right]$
\item For a map $f: A \times B \to C$, define $\mathsf{L}_0[f]: A \times B \to C$ and $\mathsf{L}_1[f]: A \times B \to C$ respectively as follows: 
\begin{align*}
\mathsf{L}_0[f] := \tau \lambda^{-1} \left( \mathsf{L}\left[\lambda\left(\tau f \right)\right] \right) && \mathsf{L}_1[f] :=  \lambda^{-1} \left( \mathsf{L}\left[\lambda\left(f \right)\right] \right)
\end{align*}
where $\tau$ was the canonical symmetry isomorphism defined in (\ref{taudef}). Then for every map $f: A \times B \to C$, $\mathsf{L}_0[\mathsf{L}_1[f]] =  \mathsf{L}_1[\mathsf{L}_0[f]]$.  
\end{enumerate}
\end{definition}

As we will see in Proposition \ref{LDexpprop}, from an exponentiable linearizing combinator we will be able to construct a closed system of linearizing combinators by uncurrying the linearization of the curry. In other words, we will be able to define total linearization from partial linearization. We first show that, as expected, a Cartesian closed left additive category with a closed systems of linearizing combinators is in fact a Cartesian closed differential category, and its induced linearizing combinator is an exponentiable linearizing combinator. Alternatively, we could have instead shown that the induced linearizing combinator and system of linearizing combinators of a Cartesian closed differential category are respectively exponentiable and closed. Therefore, a Cartesian closed differential category is precisely a Cartesian closed left additive category with a closed system of linearizing combinators. 

\begin{proposition}\label{LDclosedprop} For a Cartesian closed left additive category with a closed system of linearizing combinators $\mathsf{L}^C$:
\begin{enumerate}[{\em (i)}]
\item \label{LDclosedprop.i} The induced linearizing combinator $\mathsf{L}$ from Proposition \ref{lemL1} is an exponentiable linearizing combinator and $\mathsf{L}^C[f] = \lambda^{-1}\left( \mathsf{L}[\lambda(f)] \right)$. 
\item \label{LDclosedprop.ii} The induced differential combinator $\mathsf{D}_\mathsf{L}$ from Proposition \ref{LDprop} satisfies {\bf [CD.$\lambda$]} (or equivalently {\bf [CD.ev]}) and $\mathsf{D}_\mathsf{L}[f] = \lambda^{-1}\left( \mathsf{L}[\lambda(\oplus_A f)] \right)$. 
\end{enumerate}
Therefore, a Cartesian closed left additive category with a closed system of linearizing combinators is a Cartesian closed differential category. 
\end{proposition}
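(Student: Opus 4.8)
The plan is to prove the two numbered claims in turn and then observe that the final "therefore" statement is immediate from claim (\ref{LDclosedprop.ii}). Throughout I will use that the underlying category carries a system of linearizing combinators $\mathsf{L}^C$, so by Theorem \ref{DLthm} (equivalently Proposition \ref{LDprop}) it already is a Cartesian differential category with differential combinator $\mathsf{D}_\mathsf{L}[f] = \mathsf{L}^A[\oplus_A f]$, and by Proposition \ref{lemL1} it has an induced total linearizing combinator $\mathsf{L}$. The key computational identity I expect to lean on repeatedly is the "uncurry the total linearization" formula $\mathsf{L}^C[f] = \lambda^{-1}(\mathsf{L}[\lambda(f)])$; this is exactly {\bf [L.$\lambda$]} rewritten via $\lambda^{-1}\lambda = \mathrm{id}$, so it holds by hypothesis and should be recorded first as it feeds every subsequent step.

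For claim (\ref{LDclosedprop.i}), I must verify {\bf [EL.1]}, {\bf [EL.2]}, {\bf [EL.3]} for $\mathsf{L}$. For {\bf [EL.1]}, since $\eta = \lambda(\pi_1)$ and $\mu = \lambda(\langle \pi_0, \epsilon\rangle\epsilon)$, I would apply {\bf [L.$\lambda$]} to rewrite $\mathsf{L}[\eta] = \mathsf{L}[\lambda(\pi_1)] = \lambda(\mathsf{L}^C[\pi_1])$ and then use {\bf [L.3]} (which gives $\mathsf{L}^C[\pi_1] = \pi_1$) to conclude $\mathsf{L}[\eta] = \lambda(\pi_1) = \eta$; for $\mu$ I would similarly reduce to computing $\mathsf{L}^{?}[\langle \pi_0,\epsilon\rangle\epsilon]$ using {\bf [L.5]} or Lemma \ref{lstable-lem}.(\ref{lstable-lem.post}) together with {\bf [L.ev]} ($\epsilon$ is $\mathsf{L}^C$-linear), the key point being that $\epsilon$ is already linearized in its second argument. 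For {\bf [EL.2]}, since $[f,g] = \lambda((f\times 1)\epsilon g)$, applying {\bf [L.$\lambda$]} reduces the claim to $\mathsf{L}^D[(f\times 1)\epsilon g] = (f\times 1)\epsilon\, \mathsf{L}[g]$ in the appropriate slice, which follows from {\bf [L.8]} (substitution along $f$), {\bf [L.ev]} plus Lemma \ref{lstable-lem}.(\ref{lstable-lem.pre}) to pull $(f\times 1)\epsilon$ out on the left, and Lemma \ref{lemL1}.(\ref{lemL1.i}) to identify $\mathsf{L}^C[\pi_1 g]$ with $\pi_1\mathsf{L}[g]$. For {\bf [EL.3]} — symmetry of iterated currying linearization — I would unwind the definitions of $\mathsf{L}_0, \mathsf{L}_1$ in {\bf [EL.3]} via $\mathsf{L}^C[f] = \lambda^{-1}(\mathsf{L}[\lambda(f)])$ to translate them into the slice-category operations $\mathsf{L}^C_0, \mathsf{L}^C_1$ of {\bf [L.7]} (with $C = \top$), so that {\bf [EL.3]} becomes a consequence of Proposition \ref{lemL1}.(\ref{lemL1.iv}), i.e. the $C = \top$ instance of {\bf [L.7]}. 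The bridge here is precisely that currying a map $A\times B\to C$ and then linearizing is the same as linearizing in the slice over $A$, which is what {\bf [L.$\lambda$]} asserts. Finally, the displayed formula $\mathsf{L}^C[f] = \lambda^{-1}(\mathsf{L}[\lambda(f)])$ is just the restatement of {\bf [L.$\lambda$]} noted above.

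For claim (\ref{LDclosedprop.ii}), I need to show $\mathsf{D}_\mathsf{L}$ satisfies {\bf [CD.ev]}, namely that $\epsilon$ is linear in its second argument. By Proposition \ref{LDprop} a map is $\mathsf{L}^C$-linear iff it is linear in its second argument for $\mathsf{D}_\mathsf{L}$; and {\bf [L.ev]} (equivalent to {\bf [L.$\lambda$]} by the Lemma preceding Definition \ref{Lexpdef}) says exactly that $\epsilon$ is $\mathsf{L}^C$-linear. So {\bf [CD.ev]} holds, hence {\bf [CD.$\lambda$]} holds, so $\mathsf{D}_\mathsf{L}$ makes the category a Cartesian closed differential category. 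The formula $\mathsf{D}_\mathsf{L}[f] = \lambda^{-1}(\mathsf{L}[\lambda(\oplus_A f)])$ then follows by chaining the definition $\mathsf{D}_\mathsf{L}[f] = \mathsf{L}^A[\oplus_A f]$ with the formula from part (\ref{LDclosedprop.i}). The concluding sentence is then immediate.

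The main obstacle I anticipate is {\bf [EL.3]}: matching the $\mathsf{L}_0,\mathsf{L}_1$ of Definition \ref{Lexpdef} (defined purely via curry/uncurry and $\tau$) against the $\mathsf{L}^\top_0, \mathsf{L}^\top_1$ of {\bf [L.7]} (defined via the associator-type isomorphisms $\alpha, \beta$ and the slices $\mathbb{X}[\top\times A]$, $\mathbb{X}[\top\times B]$) will require a careful but routine bookkeeping argument showing that, under the canonical iso $\mathbb{X}[\top]\cong\mathbb{X}$ and {\bf [L.$\lambda$]}, currying into $[A,-]$ corresponds to passing to the slice over $A$ coherently with the reassociations. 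The other steps are short once the translation dictionary $\mathsf{L}^C[f] = \lambda^{-1}(\mathsf{L}[\lambda(f)])$, {\bf [L.ev]}, {\bf [L.8]}, and Lemma \ref{lemL1} are in hand; I do not expect {\bf [EL.1]} or {\bf [EL.2]} to present real difficulty.
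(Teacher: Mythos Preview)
Your proposal is correct and follows essentially the same route as the paper: record $\mathsf{L}^C[f]=\lambda^{-1}(\mathsf{L}[\lambda(f)])$ from {\bf [L.$\lambda$]}, then verify {\bf [EL.1]}--{\bf [EL.3]} by pushing $\mathsf{L}$ through $\lambda$ and invoking {\bf [L.3]}, {\bf [L.ev]}, {\bf [L.8]}, Proposition~\ref{lemL1}.(\ref{lemL1.i}), and Proposition~\ref{lemL1}.(\ref{lemL1.iv}); part (\ref{LDclosedprop.ii}) is exactly the paper's argument via {\bf [L.ev]} $\Rightarrow$ $\epsilon$ is $\mathsf{L}^C$-linear $\Rightarrow$ $\epsilon$ is $\mathsf{D}_\mathsf{L}$-linear in its second argument by Proposition~\ref{LDprop}.(\ref{LDprop.ii}).

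One remark on your anticipated obstacle: you overestimate the work for {\bf [EL.3]}. The operators $\mathsf{L}_0,\mathsf{L}_1$ in Proposition~\ref{lemL1}.(\ref{lemL1.iv}) are defined using $\tau$ (namely $\mathsf{L}_0[f]=\tau\,\mathsf{L}^B[\tau f]$ and $\mathsf{L}_1[f]=\mathsf{L}^A[f]$), not the $\alpha,\beta$ of {\bf [L.7]}, so once you plug in $\mathsf{L}^C[g]=\lambda^{-1}(\mathsf{L}[\lambda(g)])$ the two pairs of definitions coincide on the nose---no reassociation bookkeeping is required. The paper dispatches {\bf [EL.3]} in a single sentence for exactly this reason.
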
 
\begin{proof} First note that $\mathsf{L}^C[f] = \lambda^{-1}\left( \mathsf{L}[\lambda(f)] \right)$ follows immediately from \textbf{[L.$\lambda$]}, and therefore we also have that $\mathsf{D}_\mathsf{L}[f] = \lambda^{-1}\left( \mathsf{L}[\lambda(\oplus_A f)] \right)$. Next we show that $\mathsf{L}$ satisfies \textbf{[EL.1]}, \textbf{[EL.2]}, and \textbf{[EL.3]}. \\ \\
\noindent \textbf{[EL.1]}: $\mathsf{L}[\eta]=\eta$ and $\mathsf{L}[\mu]=\mu$ 
\begin{align*}
\mathsf{L}[\eta] &=~ \mathsf{L}[\lambda(\pi_1)] \\
&=~ \lambda(\mathsf{L}^C[\pi_1]) \tag*{\textbf{[L.$\lambda$]}} \\
&=~ \lambda(\pi_1) \tag*{\textbf{[L.3]}} \\
&=~ \eta \\ \\
\mathsf{L}[\mu] &=~ \mathsf{L}[\lambda \left( \langle \pi_0, \epsilon \rangle \epsilon \right) ] \\
&=~ \lambda(\mathsf{L}^C[\left( \langle \pi_0, \epsilon \rangle \epsilon \right)]) \tag*{\textbf{[L.$\lambda$]}} \\
&=~ \lambda \left( \langle \pi_0, \epsilon \rangle \epsilon \right) \tag{\textbf{[L.ev]}  + Lem.\ref{lstable-lem}.(\ref{lstable-lem.comp})} \\
&=~ \mu
\end{align*}
\noindent \textbf{[EL.2]}: $\mathsf{L}\left [ [f,g] \right] = \left[ f, \mathsf{L}[g] \right]$:
\begin{align*}
\mathsf{L}\left [ [f,g] \right] &=~ \mathsf{L}[\lambda \left( (f \times 1) \epsilon g \right) ] \\
&=~ \lambda \left( \mathsf{L}^C[(f \times 1) \epsilon g] \right) \\ 
&=~  \lambda \left( (f \times 1)  \mathsf{L}^{C^\prime}[\epsilon g] \right)  \tag*{\textbf{[L.8]}} \\
&=~ \lambda \left( (f \times 1)  \mathsf{L}^{C^\prime}[\langle \pi_0, \epsilon \rangle \pi_1 g] \right) \\ 
&=~ \lambda \left( (f \times 1) \langle \pi_0, \epsilon \rangle \mathsf{L}^{C^\prime}[ \pi_1 g] \right) \tag{\textbf{[L.ev]} + Lem.\ref{lstable-lem}.(\ref{lstable-lem.pre})} \\
&=~ \lambda \left( (f \times 1) \langle \pi_0, \epsilon \rangle \pi_1 \mathsf{L}[ g] \right) \tag{Prop.\ref{lemL1}.(\ref{lemL1.i})} \\
&=~ \lambda \left( (f \times 1) \epsilon \mathsf{L}[ g] \right) \\
&=~ \left[ f, \mathsf{L}[g] \right]
\end{align*}
\noindent \textbf{[EL.3]}: $\mathsf{L}_0[\mathsf{L}_1[f]] =  \mathsf{L}_1[\mathsf{L}_0[f]]$: \\ \\
\noindent Note that by $\textbf{[L.$\lambda$]}$, $\lambda^{-1}\left(\mathsf{L}[f] \right) = \mathsf{L}^C[\lambda^{-1}(f)]$. As such, it immediately follows that the $\mathsf{L}_0$ and $\mathsf{L}_1$ as defined in \textbf{[EL.3]} are precisely the same as $\mathsf{L}_0$ and $\mathsf{L}_1$ defined in Proposition \ref{lemL1}.(\ref{lemL1.iv}). Therefore \textbf{[EL.3]} is precisely Proposition \ref{lemL1}.(\ref{lemL1.iv}). \\ 

So we conclude that $\mathsf{L}$ is an exponentiable linearizing combinator. Next we must check that $\mathsf{D}_\mathsf{L}$ satisfies {\bf[CD.$\lambda$]} or equivalently {\bf [CD.ev]}. By {\bf [L.ev]} , $\epsilon$ is $\mathsf{L}^C$-linear and so by Proposition \ref{LDprop}.(\ref{LDprop.ii}), $\epsilon$ is linear in its second argument. Therefore, {\bf [CD.ev]} holds and we conclude that a Cartesian closed left additive category with a closed systems of linearizing combinators is a Cartesian closed differential category. 
\end{proof} 

\begin{corollary}\label{cor:6.10} For a Cartesian closed differential category with differential combinator $\mathsf{D}$:
\begin{enumerate}[{\em (i)}]
\item  \label{cor:6.10.i}The induced system of linearizing combinators $\mathsf{L}^C_\mathsf{D}$ from Proposition \ref{DLprop2} is a closed system of linearizing combinators. 
\item The induced linearizing combinator $\mathsf{L}_\mathsf{D}$ from Proposition \ref{DLprop} is an exponential linearizing combinator. 
\end{enumerate}
\end{corollary}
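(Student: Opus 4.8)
The plan is to reduce everything to the two main results already established in this section, namely Proposition \ref{LDclosedprop} together with Theorem \ref{DLthm} (and Proposition \ref{DLprop2}), and then verify the single extra closedness axiom \textbf{[L.$\lambda$]} by a direct computation using \textbf{[CD.$\lambda$]}. First I would recall that by Theorem \ref{DLthm}, starting from the differential combinator $\mathsf{D}$ of the Cartesian closed differential category, the induced system of linearizing combinators is exactly $\mathsf{L}^C_\mathsf{D}$ as in Proposition \ref{DLprop2}, which by that proposition is given concretely by $\mathsf{L}^C_\mathsf{D}[f] = \ell\, \mathsf{D}[f]$ for $f : C \times A \to B$, equivalently $\mathsf{L}^C_\mathsf{D}[f] = (1 \times \langle 0,1\rangle)\mathsf{D}^C[f]$. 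Since $\mathsf{L}^C_\mathsf{D}$ is already known to be a system of linearizing combinators, for part \ref{cor:6.10.i} it only remains to check that it is \emph{closed}, i.e.\ that \textbf{[L.$\lambda$]} holds: $\mathsf{L}[\lambda(f)] = \lambda(\mathsf{L}^C_\mathsf{D}[f])$, where by Proposition \ref{DLprop2}.(\ref{DLprop2.iii}) the total $\mathsf{L}$ coming from Proposition \ref{lemL1} agrees with $\mathsf{L}_\mathsf{D} = \langle 0,1\rangle\mathsf{D}[-]$.

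The key computation is then
\begin{align*}
\mathsf{L}_\mathsf{D}[\lambda(f)] &=~ \langle 0,1 \rangle \mathsf{D}[\lambda(f)] \\
&=~ \langle 0,1 \rangle \lambda\!\left( \mathsf{D}^C[f] \right) \tag*{\textbf{[CD.$\lambda$]}} \\
&=~ \lambda\!\left( (1 \times \langle 0,1 \rangle) \mathsf{D}^C[f] \right) \\
&=~ \lambda\!\left( \mathsf{L}^C_\mathsf{D}[f] \right),
\end{align*}
where the third step uses the standard interaction between currying and products, namely $g\,\lambda(h) = \lambda\big((1 \times g)h\big)$ (here with $g = \langle 0,1\rangle$), and the last step is the formula (\ref{contextlin1}) for $\mathsf{L}^C_\mathsf{D}$. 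This establishes \textbf{[L.$\lambda$]}, so $\mathsf{L}^C_\mathsf{D}$ is a closed system of linearizing combinators, proving part \ref{cor:6.10.i}. For part (ii), once we know $\mathsf{L}^C_\mathsf{D}$ is a closed system of linearizing combinators, Proposition \ref{LDclosedprop}.(\ref{LDclosedprop.i}) immediately yields that the induced total linearizing combinator — which by Proposition \ref{DLprop2}.(\ref{DLprop2.iii}) is precisely $\mathsf{L}_\mathsf{D}$ from Proposition \ref{DLprop} — is an exponentiable linearizing combinator.

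There is essentially no obstacle here: the corollary is a bookkeeping consequence of the equivalences already proved, and the only genuine content is the four-line verification of \textbf{[L.$\lambda$]} above, which rests on \textbf{[CD.$\lambda$]} and the currying identity $g\,\lambda(h) = \lambda\big((1 \times g)h\big)$. The one point requiring a little care is making sure the various identifications of ``the'' induced (co)structures line up — in particular using Theorem \ref{DLthm} to know that the system induced by $\mathsf{D}$ is $\mathsf{L}^C_\mathsf{D}$ and Proposition \ref{DLprop2}.(\ref{DLprop2.iii}) / Proposition \ref{LDprop}.(\ref{LDprop.iii}) to know that its induced total linearizing combinator is $\mathsf{L}_\mathsf{D}$ — but these are exactly the compatibility statements already recorded, so the argument closes cleanly.
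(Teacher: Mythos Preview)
Your proof is correct and essentially the same as the paper's, with one small difference in which equivalent axiom is verified for part \ref{cor:6.10.i}: you establish \textbf{[L.$\lambda$]} directly from \textbf{[CD.$\lambda$]} via the currying identity, whereas the paper instead checks \textbf{[L.ev]} from \textbf{[CD.ev]} by observing that $\epsilon$ is $\mathsf{D}$-linear in its second argument and hence $\mathsf{L}^C_\mathsf{D}$-linear by Proposition \ref{DLprop2}.(\ref{DLprop2.ii}). For part (ii) both you and the paper invoke Proposition \ref{LDclosedprop}.(\ref{LDclosedprop.i}) and the identification of the induced total linearizing combinator with $\mathsf{L}_\mathsf{D}$, so the arguments coincide.
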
 
\begin{proof} We must show that $\mathsf{L}^C_\mathsf{D}$ satisfies {\bf[L.$\lambda$]} or equivalently {\bf [L.ev]}. By {\bf [CD.ev]} , $\epsilon$ is $\mathsf{D}$-linear in its second argument, and so by Proposition \ref{DLprop2}.(\ref{DLprop2.ii}), $\epsilon$ is $\mathsf{L}^C_\mathsf{D}$-linear. Therefore, {\bf [L.ev]} holds and we conclude that $\mathsf{L}^C_\mathsf{D}$ is a closed system of linearizing combinators. By Proposition \ref{LDclosedprop}.(\ref{LDclosedprop.i}), the induced linearizing combinator from Proposition \ref{lemL1} is an exponentiable linearizing combinator. However by Proposition \ref{LDprop}.(\ref{LDprop.iii}), the induced linearizing combinator from Proposition \ref{DLprop} is precisely the same as the one from Proposition \ref{lemL1}. Therefore, $\mathsf{L}_\mathsf{D}$ is an exponentiable linearizing combinator. 

\hfill
\end{proof} 

We now prove the converse of Proposition \ref{LDclosedprop}, that in the closed setting we may define partial linearization from total linearization, that is, we will show that an exponentiable linearizing combinator induces a closed system of linearizing combinators. As a consequence, it follows that a Cartesian closed differential category is precisely a Cartesian left additive category with an exponentiable linearizing combinator.

\begin{proposition}\label{LDexpprop} For Cartesian closed left additive category $\mathbb{X}$ with an exponential linearizing combinator $\mathsf{L}$:
\begin{enumerate}[{\em (i)}]
\item \label{LDexpprop.i} $\mathbb{X}$ comes equipped with a closed system of linearizing combinators $\mathsf{L}^C$ defined as follows for a map $f: C \times A \to B$: 
\[ \mathsf{L}^C[f] = \lambda^{-1}\left( \mathsf{L}[\lambda(f)] \right) \]
 and the resulting induced linearizing combinator from Proposition \ref{lemL1} is precisely $\mathsf{L}$. 
 \item \label{LDexpprop.ii} $\mathbb{X}$ is a Cartesian closed differential category with differential combinator $\mathsf{D}_\mathsf{L}$ defined as follows for a map $f: A \to B$: 
 \[ \mathsf{D}_\mathsf{L}[f] = \lambda^{-1}\left( \mathsf{L}[\lambda(\oplus_A f)] \right) \]
 and furthermore, this differential combinator is precisely the induced differential combinator from Proposition \ref{LDprop}. 
\end{enumerate}
Therefore, a Cartesian closed left additive category with an exponential linearizing combinator is a Cartesian closed differential category. 
\end{proposition}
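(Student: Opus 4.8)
# Proof Proposal for Proposition \ref{LDexpprop}

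The plan is to verify that the operators $\mathsf{L}^C[f] := \lambda^{-1}\left( \mathsf{L}[\lambda(f)] \right)$ assemble into a system of linearizing combinators satisfying the closed axiom {\bf [L.$\lambda$]}, and then to invoke the previously established machinery. For part (\ref{LDexpprop.i}), I would first check axioms {\bf [L.1]}--{\bf [L.6]} for each $\mathsf{L}^C$ regarded as a linearizing combinator on the simple slice $\mathbb{X}[C]$. Each of these follows by transporting the corresponding axiom of $\mathsf{L}$ across the currying bijection: {\bf [L.1]} uses that $\lambda$ and $\lambda^{-1}$ preserve the additive structure (part of the definition of a Cartesian closed left additive category), {\bf [L.2]} uses {\bf [EL.1]} together with the fact that an $\mathsf{L}$-linear map stays $\mathsf{L}$-linear, {\bf [L.3]} reduces to $\mathsf{L}[\eta] = \eta$ and computing $\lambda$ of the relevant projections, {\bf [L.4]} follows because $\lambda$ commutes with pairing in the appropriate sense, {\bf [L.6]} is immediate from $\mathsf{L}\circ\mathsf{L} = \mathsf{L}$ and the $\lambda,\lambda^{-1}$ round-trip, and {\bf [L.5]} is the chain rule, which requires translating composition in $\mathbb{X}[C]$ (namely $\langle \pi_0, f\rangle g$) through $\lambda$ and matching it against {\bf [L.5]} for $\mathsf{L}$, with {\bf [EL.2]} supplying the bookkeeping for the $[-,-]$ functor that appears. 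The axiom {\bf [L.8]}, stability under substitution $(h\times 1)^{*}$, follows because precomposition by $h\times 1$ corresponds under currying to postcomposition by $[h,1]$, and {\bf [EL.2]} says $\mathsf{L}$ is natural in that first variable.

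The genuinely delicate axiom is {\bf [L.7]} (equivalently {\bf [L.7.a]} by Proposition \ref{L7Prop}), the symmetry of partial linearization. Here I would show that the operators $\mathsf{L}^C_0$ and $\mathsf{L}^C_1$ built from the $\mathsf{L}^C$ via $\alpha,\beta$ as in Definition \ref{syslindef} coincide, after currying out the context $C$, with the operators $\mathsf{L}_0$ and $\mathsf{L}_1$ of {\bf [EL.3]}. Concretely, currying $f: C\times(A\times B)\to D$ first along $C$ and then recognising that linearizing the $A$-slot (resp.\ $B$-slot) corresponds to the $\mathsf{L}_1$ (resp.\ $\mathsf{L}_0$) of {\bf [EL.3]} applied to a suitably curried map $A\times B\to [C,D]$; then {\bf [EL.3]} gives exactly the commutation $\mathsf{L}^C_1[\mathsf{L}^C_0[f]] = \mathsf{L}^C_0[\mathsf{L}^C_1[f]]$. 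This is the main obstacle: the translation between the $\alpha/\beta$-conjugated slice combinators and the $\tau$-conjugated curried combinators involves several currying/uncurrying identities and the compatibility of $\lambda$ with the interchange and associativity isomorphisms, and it is the analogue of the ``nastiest calculations'' already flagged in Proposition \ref{L7Prop}. I expect to isolate a small lemma, proved by a direct but routine chase, asserting $\lambda_C\left(\mathsf{L}^C_i[f]\right) = \mathsf{L}_i[\lambda_C(f)]$ for $i=0,1$ up to the canonical isos, after which {\bf [L.7]} is a one-line consequence of {\bf [EL.3]}.

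Once the system $\mathsf{L}^C$ is in hand, the closed axiom {\bf [L.$\lambda$]} holds essentially by construction: $\mathsf{L}[\lambda(f)] = \lambda\left(\lambda^{-1}(\mathsf{L}[\lambda(f)])\right) = \lambda\left(\mathsf{L}^C[f]\right)$. It then remains to check that the linearizing combinator on the base category induced from this system via Proposition \ref{lemL1} recovers the original $\mathsf{L}$; this follows by unwinding (\ref{L1def}), using that over the terminal object $\pi_1: \top\times A\to A$ and $\langle 0,1\rangle$ are mutually inverse and that currying/uncurrying over $\top$ is trivial, so $\mathsf{L}^\top[\pi_1 f]$ is just (a transport of) $\mathsf{L}[f]$. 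For part (\ref{LDexpprop.ii}), I would simply apply Proposition \ref{LDprop} to the system $\mathsf{L}^C$ just constructed, yielding a differential combinator $\mathsf{D}_\mathsf{L}[f] = \mathsf{L}^A[\oplus_A f] = \lambda^{-1}\left(\mathsf{L}[\lambda(\oplus_A f)]\right)$; the fact that $\mathsf{D}_\mathsf{L}$ is the same as the combinator named in Proposition \ref{LDprop} is immediate since it is defined by the identical formula. Finally, Proposition \ref{LDclosedprop}.(\ref{LDclosedprop.ii}) (applied to the closed system $\mathsf{L}^C$) shows $\mathsf{D}_\mathsf{L}$ satisfies {\bf [CD.$\lambda$]}, so $\mathbb{X}$ is a Cartesian closed differential category, completing the proof.
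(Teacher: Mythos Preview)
Your proposal is correct and follows essentially the same route as the paper: verify {\bf [L.1]}--{\bf [L.8]} and {\bf [L.ev]} by transporting the corresponding axioms of $\mathsf{L}$ across the currying bijection, reduce {\bf [L.7]} to {\bf [EL.3]} via the key identity $\mathsf{L}^C_i[f] = \lambda^{-1}(\mathsf{L}_i[\lambda(f)])$ (which the paper proves by first showing the iso $\phi:[A,[C,B]]\to[C\times A,B]$ is $\mathsf{L}$-linear), and then invoke Propositions \ref{LDprop} and \ref{LDclosedprop} for part (\ref{LDexpprop.ii}). A few of your axiom attributions are slightly off---{\bf [L.2]} uses the base {\bf [L.2]} rather than {\bf [EL.1]}, {\bf [L.4]} requires showing the iso $\theta:[C,A]\times[C,B]\to[C,A\times B]$ is $\mathsf{L}$-linear via {\bf [EL.2]}, and {\bf [L.5]} needs both {\bf [EL.1]} (for $\mu$) and {\bf [EL.2]}---but these are bookkeeping details you would sort out upon execution and do not affect the soundness of the plan.
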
 
\begin{proof} First, here are some useful identities which hold in any Cartesian closed category \cite[Part I, Section 3]{lambek1988introduction}: 
\begin{equation}\label{CCC1}\begin{gathered} \lambda\left( (f \times g) h k \right) = g \lambda(h) [f,k] \qquad \qquad (f \times g) \lambda^{-1}(h^\prime) k = \lambda^{-1}\left( g h^\prime [f,k] \right) \end{gathered}\end{equation}
Now we show that $\mathsf{L}^C$ satisfies \textbf{[L.1]}-\textbf{[L.8]} and \textbf{[L.ev]}: \\ \\
\noindent \textbf{[L.1]}: $\mathsf{L}^{C}[f+g]=\mathsf{L}^{C}[f]+\mathsf{L}^{C}[g]$ and $\mathsf{L}^{C}[0]=0$
\begin{align*}
\mathsf{L}^{C}[f+g] &=~ \lambda^{-1}\left( \mathsf{L}[\lambda(f+g)] \right) \\
&=~ \lambda^{-1}\left( \mathsf{L}[\lambda(f) + \lambda(g)] \right) \\
&=~ \lambda^{-1}\left( \mathsf{L}[\lambda(f)] + \mathsf{L}[\lambda(g)] \right) \tag*{\textbf{[L.1]}} \\
&=~ \lambda^{-1}\left( \mathsf{L}[\lambda(f)]\right) + \lambda^{-1}\left( \mathsf{L}[\lambda(g)] \right) \\
&=~ \mathsf{L}^{C}[f]+\mathsf{L}^{C}[g] \\ \\
\mathsf{L}^{C}[0] &=~ \lambda^{-1}\left( \mathsf{L}[\lambda(0)] \right) \\
&=~ \lambda^{-1}\left( \mathsf{L}[0] \right) \\
&=~ \lambda^{-1}\left( 0 \right) \tag*{\textbf{[L.1]}} \\
&=~ 0
\end{align*}
\noindent \textbf{[L.2]}: $(1 \times \oplus_A) \mathsf{L}^{C}[f] = (1 \times \pi_0) \mathsf{L}^{C}[f]+ (1 \times \pi_1) \mathsf{L}^{C}[f]$ and $\langle 1, 0 \rangle \mathsf{L}^{C}[f]=0$:
\begin{align*}
(1 \times \oplus_A) \mathsf{L}^{C}[f]  &=~ (1 \times \oplus_A)  \lambda^{-1}\left( \mathsf{L}[\lambda(f)]\right) \\
&=~  \lambda^{-1}\left( \oplus_A \mathsf{L}[\lambda(f)]\right) \tag{\ref{CCC1}}\\
&=~  \lambda^{-1}\left( \pi_0 \mathsf{L}[\lambda(f)] + \pi_1 \mathsf{L}[\lambda(f)]  \right) \tag*{\textbf{[L.2]}} \\
&=~  \lambda^{-1}\left( \pi_0 \mathsf{L}[\lambda(f)] \right)  + \lambda^{-1}\left( \pi_1 \mathsf{L}[\lambda(f)]  \right) \\
&=~ (1 \times \pi_0) \lambda^{-1}\left( \mathsf{L}[\lambda(f)]\right) + (1 \times \pi_1)\lambda^{-1}\left( \mathsf{L}[\lambda(f)]\right)  \tag{\ref{CCC1}}\\
&=~ (1 \times \pi_0) \mathsf{L}^{C}[f]+ (1 \times \pi_1) \mathsf{L}^{C}[f] \\ \\
\langle 1, 0 \rangle \mathsf{L}^{C}[f] &=~ \langle 1, 1\rangle (1 \times 0)  \mathsf{L}^{C}[f] \\
&=~ \langle 1, 1\rangle (1 \times 0) \lambda^{-1}\left( \mathsf{L}[\lambda(f)]\right) \\ 
&=~ \langle 1, 1\rangle \lambda^{-1}\left( 0 \mathsf{L}[\lambda(f)]\right)  \tag{\ref{CCC1}}\\
&=~ \langle 1, 1\rangle \lambda^{-1}\left( 0 \right) \tag*{\textbf{[L.2]}} \\
&=~ \langle 1, 1\rangle 0 \\
&=~ 0
\end{align*}
\noindent \textbf{[L.3]}: $\mathsf{L}^{C}[\pi_1]=\pi_1$ and $\mathsf{L}^{C}[\pi_1\pi_i]=\pi_1\pi_i$: 
\begin{align*}
\mathsf{L}^{C}[\pi_1] &=~  \lambda^{-1}\left( \mathsf{L}[\lambda(\pi_1)]\right) \\
&=~ \lambda^{-1}\left( \mathsf{L}[\eta]\right) \\
&=~ \lambda^{-1}\left( \eta \right)  \tag*{\textbf{[EL.1]}} \\
&=~ \lambda^{-1}\left( \lambda(\pi_1) \right) \\
&=~ \pi_1 \\ \\
\mathsf{L}^{C}[\pi_1\pi_i]&=~ \lambda^{-1}\left( \mathsf{L}[\lambda(\pi_1\pi_i)]\right) \\
&=~ \lambda^{-1}\left( \mathsf{L}\left[\lambda(\pi_1) [1, \pi_i] \right]\right)  \tag{\ref{CCC1}}\\
&=~ \lambda^{-1}\left( \mathsf{L}\left[\eta [1, \pi_i] \right]\right) \\
&=~ \lambda^{-1}\left( \eta~ \mathsf{L}\left[ [1, \pi_i] \right] \right)  \tag{\textbf{[EL.1]} + Lem.\ref{lstable-lem}.(\ref{lstable-lem.pre})} \\
&=~ \lambda^{-1}\left( \eta \left[1,  \mathsf{L}\left[\pi_i\right] \right] \right) \tag*{\textbf{[EL.2]}} \\
&=~ \lambda^{-1}\left( \eta \left[1,  \pi_i \right] \right) \tag*{\textbf{[L.3]}} \\
&=~ \lambda^{-1}\left( \lambda(\pi_1) \left[1,  \pi_i \right] \right) \\
&=~ \lambda^{-1}\left( \lambda(\pi_1\pi_i) \right)  \tag{\ref{CCC1}}\\
&=~ \pi_1\pi_i
\end{align*}
\noindent \textbf{[L.4]}: $\mathsf{L}^{C}\left[ \langle f, g \rangle \right] = \left \langle \mathsf{L}^{C}[f], \mathsf{L}^{C}[g] \right \rangle$ \\ \\
\noindent Recall that in any Cartesian closed category, we always have that $[C, A \times B] \cong [C, A] \times [C,B]$. So let $\theta_{C,A,B}: [C,A] \times [C,B] \to [C, A \times B]$ be the natural isomorphism defined as follows: 
\[ \theta := \lambda\left( \langle (1 \times \pi_0) \epsilon, (1 \times \pi_1) \epsilon \rangle \right) \]
with inverse $\theta^{-1}_{C,A,B}: [C, A \times B] \to [C,A] \times [C,B]$ defined as follows: 
\[ \theta^{-1}_{C,A,B} := \langle [1, \pi_0], [1, \pi_1] \rangle \]
To not overload notation, we will omit the subscripts of $\theta$ and $\theta^{-1}$. We first compute that: 
\begin{align*}
\mathsf{L}[\theta^{-1}] &=~ \mathsf{L}\left[ \langle [1, \pi_0], [1, \pi_1] \rangle \right] \\
&=~ \left \langle \mathsf{L}\left[ [1, \pi_0] \right], \mathsf{L}\left[ [1, \pi_1] \right] \right \rangle \tag*{\textbf{[L.4]}} \\
&=~  \left \langle [1, \mathsf{L}\left[ \pi_0 \right]] ,  [1, \mathsf{L}\left[ \pi_1 \right]]   \right \rangle \tag*{\textbf{[EL.2]}} \\
&=~ \langle [1, \pi_0], [1, \pi_1] \rangle \tag*{\textbf{[L.3]}}
\end{align*}
Therefore, $\theta^{-1}$ is $\mathsf{L}$-linear. Since $\theta^{-1}$ is an isomorphism, by Lemma \ref{lstable-lem}.(\ref{lstable-lem.iso}), it follows that $\theta$ is also $\mathsf{L}$-linear. Next observe that in any Cartesian closed category, the following equalities holds \cite[Part I, Section 2]{lambek1988introduction}:
\begin{equation}\label{lambdapair}\begin{gathered} \lambda(\langle f, g \rangle) = \left \langle \lambda(f), \lambda(g) \right \rangle \theta \qquad \qquad \left \langle \lambda^{-1}(h), \lambda^{-1}(k) \right \rangle = \lambda^{-1}\left(\left \langle h, k \right \rangle \theta \right)
 \end{gathered}\end{equation}
 which follows from $[C, A \times B] \cong [C,A] \times [C,B]$. As such, we can compute that: 
\begin{align*}
\mathsf{L}^{C}\left[ \langle f, g \rangle \right] &=~ \lambda^{-1}\left( \mathsf{L}[\lambda(\langle f, g \rangle)]\right) \\
&=~ \lambda^{-1}\left( \mathsf{L}\left[ \langle \lambda(f), \lambda(g) \rangle \theta \right]\right) \tag{\ref{lambdapair}} \\
&=~ \lambda^{-1}\left( \mathsf{L}\left[ \langle \lambda(f), \lambda(g) \rangle \right] \theta \right)  \tag{$\theta$ is $\mathsf{L}$-linear + Lem.\ref{lstable-lem}.(\ref{lstable-lem.post})} \\
&=~  \lambda^{-1}\left(  \left \langle \mathsf{L}\left[\lambda(f) \right], \mathsf{L}\left[\lambda(g) \right] \right \rangle \theta \right) \tag*{\textbf{[L.4]}} \\
&=~ \left \langle \lambda^{-1}\left( \mathsf{L}[\lambda(f)]\right), \lambda^{-1}\left( \mathsf{L}[\lambda(g)]\right) \right \rangle  \tag{\ref{lambdapair}} \\
&=~ \left \langle \mathsf{L}^{C}[f], \mathsf{L}^{C}[g] \right \rangle
\end{align*}
\noindent \textbf{[L.5]}: $\mathsf{L}^{C}[\langle \pi_0, f \rangle g] = \langle \pi_0, \mathsf{L}^{C}[f] \rangle~ \mathsf{L}^{C}\left[\left \langle \pi_0, \pi_1 + \langle \pi_0, 0 \rangle f \right \rangle g\right]$ \\ \\
\noindent First note that in a Cartesian closed left additive category, since post-composition preserves the additive structure, it follows that we always have the following equalities: 
\begin{equation}\label{[]-add}\begin{gathered} [f,0] = 0 \qquad \qquad [f,g+h] = [f,g] + [f,h] \end{gathered}\end{equation}
Next, note that by \textbf{[EL.1]}, $\eta$ and $\mu$ are $\mathsf{L}$-linear. So in particular, by Lemma \ref{lstable-lem}.(\ref{lstable-lem.add}), $\eta$ and $\mu$ are also additive. Also, recall that the monad identities are: 
\begin{equation}\label{monadidentities}\begin{gathered} \mu \mu = [1, \mu] \mu \qquad \qquad \eta \mu = 1 = [1, \eta] \mu \end{gathered}\end{equation}
Lastly, note that we have the following equality in any Cartesian closed category: 
\begin{equation}\label{CCC2}\begin{gathered} \lambda(\langle \pi_0, f \rangle g) = \lambda(f) [1,\lambda(g)] \mu \qquad \qquad \lambda^{-1}\left( h [1, k] \mu \right) = \langle \pi_0, \lambda^{-1}(h) \rangle \lambda^{-1}(k) \end{gathered}\end{equation}
which follows from the fact the Kleisli category of $E^C$ is isomorphic to the simple slice category $\mathbb{X}[C]$  \cite[Part I, Section 7]{lambek1988introduction}. Therefore, we compute: 
\begin{align*}
\mathsf{L}^{C}[\langle \pi_0, f \rangle g] &=~ \lambda^{-1}\left( \mathsf{L}[\lambda\left(\langle \pi_0, f \rangle g \right)]\right) \\
&=~  \lambda^{-1}\left( \mathsf{L}\left[ \lambda(f) [1, \lambda(g)] \mu \right]\right)  \tag{\ref{CCC2}}\\
&=~  \lambda^{-1}\left( \mathsf{L}\left[ \lambda(f) [1, \lambda(g)]  \right] \mu \right) \tag{\textbf{[EL.1]} + Lem.\ref{lstable-lem}.(\ref{lstable-lem.post})} \\
&=~  \lambda^{-1}\left( \mathsf{L}\left[ \lambda(f) \right] \mathsf{L}\left[ \left(1 + 0 \lambda(f) \right) [1, \lambda(g)]  \right] \mu \right) \tag*{\textbf{[L.5]}} \\ 
&=~  \lambda^{-1}\left( \mathsf{L}\left[ \lambda(f) \right] \mathsf{L}\left[ \left([1, \eta]\mu + 0 \lambda\left(f \right) \eta \mu \right) [1, \lambda(g)]  \right] \mu \right) \tag{\ref{monadidentities}} \\
&=~  \lambda^{-1}\left( \mathsf{L}\left[ \lambda(f) \right] \mathsf{L}\left[ \left([1,\eta] + 0 \lambda\left( f \right) \eta \right) \mu [1, \lambda(g)]  \right] \mu \right)   \tag{$\mu$ is additive} \\
&=~  \lambda^{-1}\left( \mathsf{L}\left[ \lambda(f) \right] \mathsf{L}\left[ \left([1,\eta] + 0 \eta [1, \lambda(f)] \right) \mu [1, \lambda(g)]  \right] \mu \right)  \tag{Naturality of $\eta$} \\
&=~  \lambda^{-1}\left( \mathsf{L}\left[ \lambda(f) \right] \mathsf{L}\left[ \left([1,\eta] + 0 [1, \lambda(f)] \right) \mu [1, \lambda(g)]  \right] \mu \right) \tag{$\eta$ is additive} \\
&=~  \lambda^{-1}\left( \mathsf{L}\left[ \lambda(f) \right] \mathsf{L}\left[ \left([1,\eta] + [1,0] [1, \lambda(f)] \right) \mu [1, \lambda(g)]  \right] \mu \right)  \tag{\ref{[]-add}} \\
&=~ \lambda^{-1}\left( \mathsf{L}\left[ \lambda(f) \right] \mathsf{L}\left[ \left([1,\eta] +[1,  0 \lambda(f)] \right) \mu [1, \lambda(g)]  \right] \mu \right)   \tag{Functoriality of $[1,-]$}\\
&=~ \lambda^{-1}\left( \mathsf{L}\left[ \lambda(f) \right] \mathsf{L}\left[ \left[1, \eta +  0 \lambda(f) \right]  \mu [1, \lambda(g)]  \right] \mu \right)   \tag{\ref{[]-add}} \\
&=~ \lambda^{-1}\left( \mathsf{L}\left[ \lambda(f) \right] \mathsf{L}\left[ \left[1, \eta +  \lambda\left((1 \times 0) f\right) \right]  \mu [1, \lambda(g)]  \right] \mu \right)   \tag{\ref{CCC1}}\\
&=~ \lambda^{-1}\left( \mathsf{L}\left[ \lambda(f) \right] \mathsf{L}\left[ \left[1, \eta +  \lambda\left( \langle\pi_0, 0 \rangle f\right) \right]  \mu [1, \lambda(g)]  \right] \mu \right)  \\
&=~ \lambda^{-1}\left( \mathsf{L}\left[ \lambda(f) \right] \mathsf{L}\left[ \left[1, \lambda(\pi_1) +  \lambda\left( \langle\pi_0, 0 \rangle f\right) \right]  \mu [1, \lambda(g)]  \right] \mu \right)  \\
&=~  \lambda^{-1}\left(  \mathsf{L}\left[ \lambda(f) \right] \mathsf{L}\left[ \left[ 1,  \lambda\left( \pi_1 + \langle \pi_0, 0 \rangle f \right)\right] \mu ~[1, \lambda(g)] \right] \mu  \right) \\
&=~  \lambda^{-1}\left(  \mathsf{L}\left[ \lambda(f) \right] \mathsf{L}\left[ \left[ 1,  \lambda\left( \pi_1 + \langle \pi_0, 0 \rangle f \right)\right] \left[1,  [1, \lambda(g)] \right] \mu \right] \mu  \right) \tag{Naturality of $\mu$} \\
&=~  \lambda^{-1}\left(  \mathsf{L}\left[ \lambda(f) \right] \mathsf{L}\left[ \left[ 1,  \lambda\left( \pi_1 + \langle \pi_0, 0 \rangle f \right)\right] \left[1,  [1, \lambda(g)] \right] \right] \mu \mu  \right) \tag{\textbf{[EL.1]} + Lem.\ref{lstable-lem}.(\ref{lstable-lem.post})} \\
&=~  \lambda^{-1}\left(  \mathsf{L}\left[ \lambda(f) \right] \mathsf{L}\left[ \left[ 1,  \lambda\left( \pi_1 + \langle \pi_0, 0 \rangle f \right)  [1, \lambda(g)] \right] \right] \mu \mu  \right) \tag{Functoriality of $[1,-]$}\\
&=~  \lambda^{-1}\left(  \mathsf{L}\left[ \lambda(f) \right] \left[ 1, \mathsf{L}\left[ \lambda\left( \pi_1 + \langle \pi_0, 0 \rangle f \right)  [1, \lambda(g)] \right] \right] \mu \mu  \right) \tag*{\textbf{[EL.2]}} \\
&=~  \lambda^{-1}\left(  \mathsf{L}\left[ \lambda(f) \right] \left[ 1, \mathsf{L}\left[ \lambda\left( \pi_1 + \langle \pi_0, 0 \rangle f \right)  [1, \lambda(g)] \right] \right] [1,\mu]\mu  \right) \tag{\ref{monadidentities}} \\
&=~  \lambda^{-1}\left(  \mathsf{L}\left[ \lambda(f) \right] \left[ 1, \mathsf{L}\left[ \lambda\left( \pi_1 + \langle \pi_0, 0 \rangle f \right)  [1, \lambda(g)] \right] \mu \right] \mu  \right) \tag{Functoriality of $[1,-]$}\\
&=~  \lambda^{-1}\left(  \mathsf{L}\left[ \lambda(f) \right] \left[ 1, \mathsf{L}\left[ \lambda\left( \pi_1 + \langle \pi_0, 0 \rangle f \right)  [1, \lambda(g)] \mu \right] \right] \mu  \right) \tag{\textbf{[EL.1]} + Lem.\ref{lstable-lem}.(\ref{lstable-lem.post})} \\
&=~  \lambda^{-1}\left(  \mathsf{L}\left[ \lambda(f) \right] \left[ 1, \mathsf{L}\left[\lambda\left( \left \langle \pi_0, \pi_1 + \langle \pi_0, 0 \rangle f \right \rangle g \right) \right] \right] \mu  \right) \tag{\ref{CCC2}}\\
&=~ \langle \pi_0,  \lambda^{-1}\left( \mathsf{L}[\lambda(f)]\right) \rangle~  \lambda^{-1}\left( \mathsf{L}\left[\lambda\left( \left \langle \pi_0, \pi_1 + \langle \pi_0, 0 \rangle f \right \rangle g \right) \right]\right) \tag{\ref{CCC2}}\\
&=~ \langle \pi_0, \mathsf{L}^{C}[f] \rangle~ \mathsf{L}^{C}\left[\left \langle \pi_0, \pi_1 + \langle \pi_0, 0 \rangle f \right \rangle g\right]
\end{align*}
\noindent \textbf{[L.6]}: $\mathsf{L}^{C}\left[\mathsf{L}^{C}[f]\right] = \mathsf{L}^{C}[f]$
\begin{align*}
\mathsf{L}^{C}\left[\mathsf{L}^{C}[f]\right] &=~ \lambda^{-1}\left( \mathsf{L}\left[\lambda\left(\lambda^{-1}\left( \mathsf{L}[\lambda(f)]\right) \right) \right]\right) \\
&=~ \lambda^{-1}\left( \mathsf{L}\left[\mathsf{L}[\lambda(f)] \right]\right) \\
&=~  \lambda^{-1}\left( \mathsf{L}[\lambda(f)]\right)  \tag*{\textbf{[L.6]}} \\
&=~ \mathsf{L}^{C}[f]
\end{align*}
\noindent \textbf{[L.7]}: $\mathsf{L}^C_1[\mathsf{L}^C_0[f]] =  \mathsf{L}^C_0[\mathsf{L}^C_1[f]]$ \\ \\
\noindent Recall that in any Cartesian closed category, we always have that $[A,[C,B]] \cong [C \times A, B]$. So let $\phi_{A,C,B}: [A,[C,B]] \to [C \times A, B]$ be the natural isomorphism defined as follows: 
\[ \phi_{A,C,B} = \lambda\left( \alpha^{-1} (1 \times \epsilon) \epsilon \right) \]
 As before, as to not overload notation, we will omit the subscripts of $\phi$. We first note that we could have also expressed $\phi$ is terms of $\mu$ and $[-,-]$ as follows: 
\begin{equation}\label{phimu}\begin{gathered}\phi = \left[ \pi_1, [\pi_0, 1] \right] \mu
 \end{gathered}\end{equation}
Therefore, we can compute that: 
\begin{align*}
\mathsf{L}[\phi] &=~ \mathsf{L}\left[ \left[\pi_1, [\pi_0,1] \right] \mu \right] \tag{\ref{phimu}} \\
&=~  \mathsf{L}\left[ \left[\pi_1, [\pi_0,1] \right]\right] \mu  \tag{\textbf{[EL.1]} + Lem.\ref{lstable-lem}.(\ref{lstable-lem.post})} \\
&=~  \left[ \pi_1, \mathsf{L}\left[ [\pi_0,1] \right]\right] \mu\tag*{\textbf{[EL.2]}} \\
&=~  \left[ \pi_1,  [\pi_0,\mathsf{L}[1]] \right] \mu\tag*{\textbf{[EL.2]}} \\
&=~  \left[ \pi_1,  [\pi_0,1] \right] \mu\tag*{\textbf{[L.3]}} \\
&=~ \phi \tag{\ref{phimu}} 
\end{align*}
So $\phi$ is $\mathsf{L}$-linear. Next observe that for a map $f: C \times (A \times B) \to D$ (or a map $g: B \to [A, [C,D]]$) we can apply the curry operator (or uncurry operator) twice and the following equalities hold in any Cartesian closed category (which we leave to the reader to check for themselves): 
\begin{equation}\label{phialpha}\begin{gathered}\lambda(\lambda(f)) \phi =  \lambda(\alpha^{-1}f) \qquad \qquad \lambda^{-1}\left( \lambda^{-1}\left( g \right) \right)  = \alpha  \lambda^{-1}(g \phi) 
 \end{gathered}\end{equation}
 \begin{equation}\label{phibeta}\begin{gathered}\lambda\left( \tau \lambda(f) \right) \phi = \lambda(\beta^{-1}f) \qquad \qquad  \lambda^{-1}\left( \tau \lambda^{-1}\left( g \right) \right) = \beta \lambda^{-1}\left( g \phi \right) 
 \end{gathered}\end{equation}
 As such, we can compute the following:  
\begin{align*}
\mathsf{L}^C_0[f] &=~ \beta \mathsf{L}^{C \times B}[\beta^{-1}f] \\
&=~ \beta \lambda^{-1}\left( \mathsf{L}[\lambda(\beta^{-1}f)] \right) \\
&=~ \beta \lambda^{-1}\left( \mathsf{L}[\lambda\left( \tau \lambda(f) \right) \phi] \right)  \tag{\ref{phibeta}}  \\
&=~ \beta \lambda^{-1}\left( \mathsf{L}[\lambda\left( \tau \lambda(f) \right) ] \phi \right) \tag{$\phi$ is $\mathsf{L}$-linear + Lem.\ref{lstable-lem}.(\ref{lstable-lem.post})} \\
&=~ \lambda^{-1}\left( \tau \lambda^{-1}\left( \mathsf{L}[\lambda\left( \tau \lambda(f) \right) ]  \right) \right) \tag{\ref{phibeta}}  \\
&=~ \lambda^{-1}\left( \mathsf{L}_0[\lambda(f)] \right) \\ \\
\mathsf{L}^C_1[f] &=~  \alpha \mathsf{L}^{C \times A}[\alpha^{-1}f] \\ 
&=~ \alpha \lambda^{-1}\left( \mathsf{L}[\lambda(\alpha^{-1}f)] \right) \\
&=~ \alpha \lambda^{-1}\left( \mathsf{L}[\lambda(\lambda(f)) \phi] \right)  \tag{\ref{phialpha}}  \\
&=~ \alpha \lambda^{-1}\left( \mathsf{L}[\lambda(\lambda(f))] \phi \right) \tag{$\phi$ is $\mathsf{L}$-linear + Lem.\ref{lstable-lem}.(\ref{lstable-lem.post})} \\
&=~ \lambda^{-1}\left( \lambda^{-1}\left( \mathsf{L}[\lambda(\lambda(f))] \right) \right)  \tag{\ref{phialpha}}  \\
&=~\lambda^{-1}\left(  \mathsf{L}_1[\lambda(f)] \right)
\end{align*}
So we have the following equalities: 
\begin{equation}\label{L0L1exp}\begin{gathered} \mathsf{L}^C_0[f] =  \lambda^{-1}\left( \mathsf{L}_0[\lambda(f)] \right) \quad \quad \quad \mathsf{L}^C_1[f] =  \lambda^{-1}\left( \mathsf{L}_1[\lambda(f)] \right)
 \end{gathered}\end{equation}
Then we have that: 
\begin{align*}
\mathsf{L}^C_1[\mathsf{L}^C_0[f]] &=~ \lambda^{-1}\left(  \mathsf{L}_1\left[\lambda\left(  \lambda^{-1}\left( \mathsf{L}_0[\lambda(f)]  \right) \right) \right] \right) \tag{\ref{L0L1exp}} \\
&=~ \lambda^{-1}\left(  \mathsf{L}_1\left[\mathsf{L}_0[\lambda(f)]   \right] \right) \\
&=~ \lambda^{-1}\left(  \mathsf{L}_0\left[\mathsf{L}_1[\lambda(f)]   \right] \right) \tag*{\textbf{[EL.3]}} \\
&=~ \lambda^{-1}\left(  \mathsf{L}_0\left[\lambda\left(  \lambda^{-1}\left( \mathsf{L}_1[\lambda(f)]  \right) \right) \right] \right) \\
&=~ \mathsf{L}^C_0[\mathsf{L}^C_1[f]] \tag{\ref{L0L1exp}} 
\end{align*} 
\noindent \textbf{[L.8]}: $(h \times 1)\mathsf{L}^{C^\prime}[f]  = \mathsf{L}^{C}[(h \times 1)f]$ \\ \\
We first observe that for any map $h$: 
\begin{align*}
\mathsf{L}[h,1] &=~ [h, \mathsf{L}[1] ] \tag*{\textbf{[EL.2]}} \\
&=~ [h,1] \tag*{\textbf{[L.3]}} 
\end{align*}
Therefore, $[h,1]$ is $\mathsf{L}$-linear. 
\begin{align*}
(h \times 1)\mathsf{L}^{C^\prime}[f]  &=~ (h \times 1) \lambda^{-1}\left( \mathsf{L}[\lambda(f)]\right) \\
&=~  \lambda^{-1}\left(\mathsf{L}[\lambda(f)] [h,1]\right) \tag{\ref{CCC1}}\\
&=~ \lambda^{-1}\left(\mathsf{L}[\lambda(f) [h,1] ] \right)  \tag{$[h,1]$ is $\mathsf{L}$-linear + Lem.\ref{lstable-lem}.(\ref{lstable-lem.post})} \\
&=~ \lambda^{-1}\left(\mathsf{L}[\lambda\left( (h \times 1) f \right) ] \right) \tag{\ref{CCC1}}\\
&=~ \mathsf{L}^{C}[(h \times 1)f]
\end{align*}
\noindent \textbf{[L.ev]}: $\mathsf{L}^C[\epsilon] = \epsilon$
\begin{align*}
\mathsf{L}^C[\epsilon]  &=~  \lambda^{-1}\left( \mathsf{L}[\lambda(\epsilon)]\right) \\
&=~  \lambda^{-1}\left( \mathsf{L}[1]\right) \\
&=~  \lambda^{-1}\left( 1 \right) \tag*{\textbf{[L.3]}} \\
&=~ \epsilon 
\end{align*}
So we conclude that $\mathsf{L}^C$ is a closed system of linearizing combinators. We also have that: 
\begin{align*}
\langle 0,1 \rangle \mathsf{L}^\top[\pi_1 f] &=~ \langle 0,1 \rangle \lambda^{-1}\left( \mathsf{L}[\lambda(\pi_1 f)]\right) \\
&=~ \langle 0,1 \rangle \lambda^{-1}\left( \mathsf{L}\left[\lambda(\pi_1) [1,f] \right]\right) \tag{\ref{CCC1}}\\
&=~ \langle 0,1 \rangle \lambda^{-1}\left( \mathsf{L}\left[\eta [1,f] \right]\right) \\
&=~ \langle 0,1 \rangle \lambda^{-1}\left(\eta ~\mathsf{L}\left[ [1,f] \right]\right) \tag{\textbf{[EL.1]} + Lem.\ref{lstable-lem}.(\ref{lstable-lem.pre})} \\
&=~ \langle 0,1 \rangle \lambda^{-1}\left(\eta ~\left[ 1, \mathsf{L}[f] \right]\right) \tag*{\textbf{[EL.2]}} \\
&=~ \langle 0,1 \rangle \lambda^{-1}\left(\lambda(\pi_1) \left[ 1, \mathsf{L}[f] \right]\right)  \\
&=~ \langle 0,1 \rangle \lambda^{-1}\left(\lambda\left(\pi_1 \mathsf{L}[f]\right) \right)   \tag{\ref{CCC1}}\\
&=~ \langle 0,1 \rangle \pi_1 \mathsf{L}[f] \\
&=~ \mathsf{L}[f] 
\end{align*}
Therefore, $\mathsf{L}[f] = \langle 0,1 \rangle \mathsf{L}^\top[\pi_1 f]$ and so $\mathsf{L}$ is precisely the induced linearizing combinator from Proposition \ref{lemL1}. Next we must show that $\mathsf{D}_\mathsf{L}$ is a differential combinator which also satisfies {\bf [CD.$\lambda$]} (or equivalently {\bf [CD.ev]}). However, we have that: 
\begin{align*}
\mathsf{D}_\mathsf{L}[f] &=~ \lambda^{-1}\left( \mathsf{L}[\lambda(\oplus_A f)] \right) \\
&=~ \mathsf{L}^A[\oplus_A f] 
\end{align*}
Therefore, $\mathsf{D}_\mathsf{L}$ is precisely the induced differential combinator from Proposition \ref{LDprop}. Furthermore, by Proposition \ref{LDclosedprop}.(\ref{LDclosedprop.ii}), $\mathsf{D}_\mathsf{L}$ satisfies {\bf [CD.$\lambda$]} (or equivalently {\bf [CD.ev]}). So we conclude that a Cartesian left additive category with an exponential linearizing combinator is a Cartesian closed differential category. 
\end{proof} 

We conclude this paper by stating the second main result of this paper. 

\begin{theorem}\label{finalthm} For a Cartesian closed left additive category $\mathbb{X}$, there is a bijective correspondence between:
\begin{enumerate}[{\em (i)}]
\item \label{finalthm.i} Differential combinators $\mathsf{D}$ on $\mathbb{X}$ which satisfy {\bf [CD.$\lambda$]} (or equivalently {\bf [CD.ev]}); 
\item \label{finalthm.ii} Closed systems of linearizing combinators $\mathsf{L}^C$ on $\mathbb{X}$;
\item \label{finalthm.iii} Exponentiable linearizing combinators $\mathsf{L}$ on $\mathbb{X}$. 
\end{enumerate}
Therefore, a Cartesian closed differential category is precisely a Cartesian closed left additive category equipped with a exponentiable linearizing combinator or equivalently a Cartesian closed left additive category equipped with a closed system of linearizing combinators. 
\end{theorem}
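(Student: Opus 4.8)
The plan is to establish the bijective correspondence by assembling the three constructions already developed in the excerpt into a closed loop of mutually inverse passages, exactly mirroring the strategy used for Theorem \ref{DLthm}. The three passages are: (a) from a differential combinator $\mathsf{D}$ satisfying {\bf [CD.$\lambda$]} to a closed system of linearizing combinators, given by $\mathsf{L}^C_\mathsf{D}$ of Proposition \ref{DLprop2} together with Corollary \ref{cor:6.10}.(\ref{cor:6.10.i}); (b) from a closed system of linearizing combinators $\mathsf{L}^C$ to an exponentiable linearizing combinator, given by the induced $\mathsf{L}$ of Proposition \ref{lemL1} together with Proposition \ref{LDclosedprop}.(\ref{LDclosedprop.i}); and (c) from an exponentiable linearizing combinator $\mathsf{L}$ back to a differential combinator satisfying {\bf [CD.$\lambda$]}, given by $\mathsf{D}_\mathsf{L}$ of Proposition \ref{LDexpprop}.(\ref{LDexpprop.ii}). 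Since each of these three passages has already been shown to land in the claimed target structure, what remains is only to verify that going around the triangle in either direction returns the original data.

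First I would dispose of the correspondence between (\ref{finalthm.i}) and (\ref{finalthm.ii}). By Theorem \ref{DLthm} we already have a bijection between differential combinators and systems of linearizing combinators, realized by $\mathsf{D} \mapsto \mathsf{L}^C_\mathsf{D}$ and $\mathsf{L}^C \mapsto \mathsf{D}_\mathsf{L}$, which are mutually inverse. It therefore suffices to check that under this existing bijection, $\mathsf{D}$ satisfies {\bf [CD.$\lambda$]} (equivalently {\bf [CD.ev]}) if and only if $\mathsf{L}^C$ is closed (i.e. satisfies {\bf [L.ev]}). One direction is Corollary \ref{cor:6.10}.(\ref{cor:6.10.i}); the converse is Proposition \ref{LDclosedprop}.(\ref{LDclosedprop.ii}). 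So the bijection of Theorem \ref{DLthm} restricts to a bijection between (\ref{finalthm.i}) and (\ref{finalthm.ii}).

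Next I would handle the correspondence between (\ref{finalthm.ii}) and (\ref{finalthm.iii}). Proposition \ref{LDclosedprop}.(\ref{LDclosedprop.i}) sends a closed system $\mathsf{L}^C$ to the exponentiable linearizing combinator $\mathsf{L}$ of Proposition \ref{lemL1}, and records that $\mathsf{L}^C[f] = \lambda^{-1}(\mathsf{L}[\lambda(f)])$. Conversely, Proposition \ref{LDexpprop}.(\ref{LDexpprop.i}) sends an exponentiable linearizing combinator $\mathsf{L}$ to the closed system $\mathsf{L}^C[f] := \lambda^{-1}(\mathsf{L}[\lambda(f)])$ and records that the induced total linearizing combinator from Proposition \ref{lemL1} is again precisely $\mathsf{L}$. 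Thus one round trip, starting from an exponentiable $\mathsf{L}$, returns $\mathsf{L}$ on the nose, by the last-displayed computation in the proof of Proposition \ref{LDexpprop}. For the other round trip, starting from a closed system $\mathsf{L}^C$, Proposition \ref{LDclosedprop}.(\ref{LDclosedprop.i}) tells us the passage through $\mathsf{L}$ and back produces $f \mapsto \lambda^{-1}(\mathsf{L}[\lambda(f)])$, and by the identity $\mathsf{L}^C[f] = \lambda^{-1}(\mathsf{L}[\lambda(f)])$ (again from Proposition \ref{LDclosedprop}.(\ref{LDclosedprop.i}), which is immediate from {\bf [L.$\lambda$]}) this is the original $\mathsf{L}^C$. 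Hence (\ref{finalthm.ii}) and (\ref{finalthm.iii}) are in bijective correspondence, and chaining with the previous paragraph gives the three-way bijection; the final sentence of the theorem is then just a restatement.

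The genuinely load-bearing work is not in this theorem itself — it is almost pure bookkeeping once Propositions \ref{DLprop2}, \ref{lemL1}, \ref{LDclosedprop}, \ref{LDexpprop} and Corollary \ref{cor:6.10} are in hand — but rather in those propositions, and the main obstacle there is Proposition \ref{LDexpprop}: verifying that $\mathsf{L}^C[f] := \lambda^{-1}(\mathsf{L}[\lambda(f)])$ satisfies all of {\bf [L.1]}–{\bf [L.8]}. The delicate cases are {\bf [L.5]}, which requires threading the monad identities for $\mu$ and $\eta$ through the curry–uncurry translation via the isomorphism between the Kleisli category of $E^C$ and the simple slice $\mathbb{X}[C]$, and {\bf [L.7]}, which needs the double-curry isomorphism $\phi$ and the facts \eqref{phialpha}, \eqref{phibeta} together with $\mathsf{L}$-linearity of $\phi$ to reduce {\bf [L.7]} to {\bf [EL.3]}. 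For the theorem as stated, though, no new computation is needed: the only thing to be careful about is to invoke the correct half of each cited proposition in each direction of each round trip, and to note that {\bf [CD.$\lambda$]} and {\bf [CD.ev]} (resp. {\bf [L.$\lambda$]} and {\bf [L.ev]}) are interchangeable by the equivalences already proved.
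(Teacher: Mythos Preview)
Your proposal is correct and follows essentially the same approach as the paper: establish the bijection between (\ref{finalthm.i}) and (\ref{finalthm.ii}) by restricting Theorem \ref{DLthm} via Corollary \ref{cor:6.10}.(\ref{cor:6.10.i}) and Proposition \ref{LDclosedprop}.(\ref{LDclosedprop.ii}), and establish the bijection between (\ref{finalthm.ii}) and (\ref{finalthm.iii}) via Proposition \ref{LDclosedprop}.(\ref{LDclosedprop.i}) and Proposition \ref{LDexpprop}.(\ref{LDexpprop.i}). Your explicit spelling-out of the two round trips for the (\ref{finalthm.ii})--(\ref{finalthm.iii}) direction, and your remark that the real work lies in Proposition \ref{LDexpprop} (especially {\bf [L.5]} and {\bf [L.7]}), are accurate elaborations of what the paper leaves as a parenthetical.
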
 
\begin{proof} That (\ref{finalthm.i}) and (\ref{finalthm.ii}) are in bijective correspondence follows from Theorem \ref{DLthm}, Proposition \ref{LDclosedprop}.(\ref{LDclosedprop.ii}), and Corollary \ref{cor:6.10}.(\ref{cor:6.10.i}). On the other hand, that (\ref{finalthm.ii}) and (\ref{finalthm.iii}) are in bijective correspondence follows form Proposition \ref{LDclosedprop}.(\ref{LDclosedprop.i}) and Proposition \ref{LDexpprop}.(\ref{LDexpprop.i}) (which when put together shows that their respective constructions are inverses of each other). \end{proof}

\section{Concluding Remarks}\label{sec:conclusion}

The main purpose of this paper is to establish in detail an alternative axiomatization for Cartesian differential categories using a system of linearizing combinators.  This was motivated by the existing techniques of Goodwillie's functor calculus and, in particular, the example of the abelian functor calculus, which focused on the processes of linearization and Taylor approximation rather than differentiation per se \cite{bauer2018directional}. Regarding the abelian functor calculus, a question which now should be answered is whether, in fact, its linearization combinator is exponentiable and if $\mathsf{HoAbCat}_\mathsf{Ch}$ is a Cartesian closed differential category. 

While an alternative axiomatization for Cartesian differential categories is, of course, always of theoretical interest, in this case it was motivated by a practical example in which the alternative axiomatization using linearization arose quite naturally.  Notably the axiomatization presented here gives an algebraic face to the classical relationship between linear approximation and differential.  However, the weakness of this alternative axiomatization should not be overlooked.  The problem is that one needs to assume partial linearization at the outset: this is a significant requirement. On top of the required equalities which must be established, checking that linearizing works in context increases the overhead for checking that one has a Cartesian differential category.  In this regard the total differential combinator has a clear advantage.  Example \ref{counter-example} of $\mathcal{C}^1\text{-}\mathsf{DIFF}$, however, indicates an important aspect of linearization: it can exist for functions which are {\em not\/} infinitely differentiable and these are definitely in the purview of classical analysis.  This suggests that linearization could play a significant role in providing a broader categorical approach for non-smooth analysis.

It is worth emphasizing the discussion at the end of Section \ref{system-sec}. In the development of differential categories, tensor differential categories have always had a guiding role: they provide an important source of examples of Cartesian differential categories by applying the coKleisli construction  (indeed, even tangent categories can often be produced by applying the coEilenberg-Moore construction \cite{cockett_et_al:LIPIcs:2020:11660}).  Thus, it is worth understanding how linearization appears in tensor differential categories.   Somewhat surprisingly it is the correspondence between deriving transformations and the coderelictions for \emph{tensor} differential categories.  This correspondence becomes, under translation into the coKleisli category, the correspondence between differential combinators and systems of linearizing combinators: 
\begin{table}[!h]
\begin{center}
\begin{tabular}{|c|c|}
\hline
$\otimes$-differential categories & Cartesian differential categories \\
\hline 
Deriving transformations & Differential combinators $\mathsf{D}$ \\
$\mathsf{d}: \oc A \otimes A \to \oc A$ & $\infer{\mathsf{D}[f]: A \times A \to B}{f: A \to B}$ \\
\hline 
Coderelictions  & Linearizing Combinators $\mathsf{L}$ \\
$\eta: A \to \oc A$ & $\infer{\mathsf{L}[f]: A \to B}{f: A \to B}$  \\
\hline
\end{tabular}
\end{center}
\label{default}
\end{table}%

Linearizing combinators, thus, should also provide equivalent axiomatizations for generalizations of Cartesian differential categories including generalized Cartesian differential categories \cite{cruttwell2017cartesian}, differential restriction categories \cite{cockett2011differential}, and even tangent categories \cite{cockett2014differential}.  In each setting the precise form that linearization takes needs to be developed: hopefully this development, centred as it is on Cartesian differential categories, will be a useful guide.


\bibliographystyle{spmpsci}      
\bibliography{lincombib}   
\end{document}